\newcommand{\ed}{{\rm d}}
\newcommand{\w}{{\mathchoice{\,{\scriptstyle\wedge}\,}{{\scriptstyle\wedge}}
      {{\scriptscriptstyle\wedge}}{{\scriptscriptstyle\wedge}}}}
\newcommand{\del}{{\partial}}
\newcommand{\delx}{\partial_{\xi}}
\newcommand{\delxb}{\partial_{\xib}}
\newcommand{\liealgebra}[1]{{\mathfrak {#1}}}
\newcommand{\g}{\liealgebra{g}}
\newcommand{\sla}{\liealgebra{sl}}
\newcommand{\su}{\liealgebra{su}}
\newcommand{\un}{\liealgebra{u}}
\newcommand{\liegroup}[1]{{\operatorname{#1}}}
\newcommand{\G}{\liegroup{G}}
\newcommand{\SL}{\liegroup{SL}}
\newcommand{\SO}{\liegroup{SO}}
\newcommand{\Sp}{\liegroup{Sp}}
\newcommand{\SU}{\liegroup{SU}}
\newcommand{\Un}{\liegroup{U}}
\newcommand{\R}{\mathbb R}
\newcommand{\C}{\mathbb C}
\newcommand{\Z}{\mathbb Z}
\newcommand{\PP}{\mathbb P}
\DeclareMathOperator{\Ad}{Ad}
\newcommand{\tr}{\rm tr}
\newcommand{\mcc}{\mathcal C}
\newcommand{\mce}{\mathcal E}
\newcommand{\mcf}{\mathcal F}
\newcommand{\mch}{\mathcal H}
\newcommand{\mci}{\mathcal I}
\newcommand{\mcl}{\mathcal L}
\newcommand{\mcn}{\mathcal N}
\newcommand{\mco}{\mathcal O}
\newcommand{\mcp}{\mathcal P}
\newcommand{\mcq}{\mathcal Q}
\newcommand{\mcr}{\mathcal R}
\newcommand{\mcu}{\mathcal U}
\newcommand{\mfj}{\mathfrak J}
\newcommand{\x}{\textnormal{x}}
\newcommand{\xh}{\hat{\textnormal{x}}}
\newcommand{\im}{\textnormal{i}}
\newcommand{\btheta}{\boldsymbol{\theta}}
\newcommand{\bba}{\tb{a}}
\newcommand{\bbb}{\tb{b}}
\newcommand{\bbc}{\tb{c}}
\newcommand{\bbf}{\tb{f}}
\newcommand{\bbg}{\tb{g}}
\newcommand{\bbp}{\tb{p}}
\newcommand{\bbs}{\tb{s}}
\newcommand{\bbt}{\tb{t}}
\newcommand{\ol}{\overline}
\newcommand{\zb}{\ol{z}}
\newcommand{\hb}{\bar h}
\newcommand{\etab}{\ol{\eta}}
\newcommand{\thetab}{\ol{\theta}}
\newcommand{\zetab}{\ol{\zeta}}
\newcommand{\omb}{\ol{\omega}}
\newcommand{\xib}{\ol{\xi}}
\newcommand{\s}[1]{{\mathbb S}^{#1}}
\newcommand{\F}[1]{\mcf^{(#1)}}
\newcommand{\Fh}[1]{\hat{\mcf}^{(#1)}}
\newcommand{\X}[1]{X^{(#1)}}
\newcommand{\Xh}[1]{\hat{X}^{(#1)}}
\newcommand{\I}[1]{{\rm I}^{(#1)}}
\newcommand{\Ih}[1]{{\hat{\rm I}}^{(#1)}}
\newcommand{\cp}[1]{{\C \mathbb{P}^{#1}}}
\newcommand{\pr}{{\mathbb{P}}}
\newcommand{\xinf}{X^{(\infty)}}
\newcommand{\xinfh}{\hat{X}^{(\infty)}}
\newcommand{\iinf}{{\rm I}^{(\infty)}}
\newcommand{\iinfh}{\hat{\rm I}^{(\infty)}}
\DeclareMathOperator{\Gr}{\rm Gr}
\newcommand{\be}{\begin{equation}}
\newcommand{\ee}{\end{equation}}
\newcommand{\benu}{\begin{enumerate}}
\newcommand{\enu}{\end{enumerate}}
\newcommand{\beit}{\begin{itemize}}
\newcommand{\enit}{\end{itemize}}
\newcommand{\bp}{\begin{pmatrix}}
\newcommand{\ep}{\end{pmatrix}}
\newcommand{\lra}{\longrightarrow}
\newcommand{\ff}{{\rm I \negthinspace I}}
\newcommand{\Sigmah}{\hat{\Sigma}}
\newcommand{\n}{\notag}
\newcommand{\noi}{\noindent}
\newcommand{\tn}{\textnormal}
\newcommand{\tb}{\textbf}
\newcommand{\hook}{\hookrightarrow}
\newcommand{\one}{\vspace{1mm}}
\newcommand{\two}{\vspace{2mm}}
\newcommand{\ftmark}{\footnotemark}
\newcommand{\fttext}{\footnotetext}
\newcommand{\sub}{\subsection}
\newcommand{\subb}{\subsubsection}
\newcommand{\Rmnum}[1]{\expandafter\@slowromancap\romannumeral #1@}
\theoremstyle{plain}
\newtheorem{thm}{Theorem}[section]
\newtheorem{lem}[thm]{Lemma}
\newtheorem{cor}[thm]{Corollary}
\newtheorem{prop}[thm]{Proposition}
\newtheorem{rem}[thm]{Remark}
\theoremstyle{definition}
\newtheorem{defn}{Definition}[section]
\newtheorem{exam}[thm]{Example}
\begin{document}
\title[Formal Killing fields for minimal Lagrangian surfaces]
{Formal Killing fields for minimal Lagrangian surfaces\\ in complex space forms}
\author{Joe S. Wang}
\address{Seoul, South Korea}
\email{jswang12@gmail.com}
\subjclass[2000]{53C43, 35A27}
\date{\today}
\keywords{differential geometry, exterior differential system,  minimal Lagrangian surface, 
infinite prolongation, characteristic cohomology, Jacobi field, conservation law,  formal Killing field, recursion}  
\begin{abstract}
The  differential system for  minimal Lagrangian surfaces
in a $2_{\C}$-dimensional, non-flat, complex space form is 
an elliptic system defined on the bundle of oriented Lagrangian planes.
This is a 6-symmetric space associated with the  Lie group $\SL(3,\C)$,
and the minimal Lagrangian surfaces arise as the primitive maps.
Utilizing this property,
we derive the differential algebraic  inductive formulas 
for a pair of  loop algebra $\sla(3,\C)[[\lambda]]$-valued canonical formal Killing fields.
As a result, we give a complete classification of the (infinite sequence of) Jacobi fields
for the minimal Lagrangian system.
We also obtain an infinite  sequence  of higher-order conservation laws
from the components of the formal Killing fields.
\end{abstract}
\maketitle

\setcounter{tocdepth}{2}
\tableofcontents 

\section{Introduction}\label{sec:intro}
\sub{Minimal Lagrangian surface} 
For an immersed Lagrangian submanifold in a K\"ahler manifold,
the vanishing of the mean curvature is equivalent to that 
the associated section of $(n,0)$-form is parallel along the submanifold.
From the well known identity in K\"ahler geometry, 
that the Ricci 2-form is up to constant scale the curvature form of the canonical line bundle,
the minimality condition implies that 
the restriction of Ricci 2-form to the Lagrangian submanifold mush also vanish.
When coupled with the Lagrangian condition, 
they form an over-determined system of differential equations 
which is generally not compatible. 

In case the ambient manifold is K\"ahler-Einstein and 
the Ricci 2-form is a constant multiple of the symplectic form, 
the minimal Lagrangian equation is involutive  and, at least locally,
it admits many solutions, \cite{Bryant1987Lag}.
\subb{Special Lagrangian submanifold}
In relation to the developments in string theory, 
special Lagrangian submanifolds in Calabi-Yau (or Ricci-flat K\"ahler) manifolds
have received much attention recently.
As a particular case of calibrated geometry,
the various aspects of the geometry of special Lagrangian varieties have been studied
from the analytic perspectives, including 
deformation problem \cite{McLean1998}, 
gluing constructions \cite{Haskins2007,Haskins2012,Pacini2013}, 
and singularity analysis  \cite{Joyce2003,Butscher2004}, etc.
We refer to \cite{Joyce2007,Lee2011} for the further  references.

From a different perspective,
the special Lagrangian submanifolds in $\C^m$ with  nontrivial second order symmetries
have been studied by  in-depth analyses of the structure equations in \cite{Bryant2006,Ionel2003}.

\subb{Minimal Lagrangian surface}
In the 2-dimensional case,  Schoen and Wolfson gave a variational analysis of 
area minimizing (Hamiltonian stationary) Lagrangian surfaces, \cite{Schoen2001}.
They proved the existence of area minimizer with the particular forms of admissible conical singularities.
Haskins and Kapouleas gave a gluing construction of 
the compact high-genus special Legendrian surfaces in the 5-sphere, \cite{Haskins2007}. 
Under the Hopf map $\s{5}\to\C\PP^2$, these surfaces are mapped to
minimal Lagrangian surfaces.
In \cite{Loftin2013},
the minimal Lagrangian surfaces in the hyperbolic complex space form $\C\mathbb{H}^2$ 
were studied in relation to
the surface group representations in $\SU(1,2)$.
For the integrable system aspects of the theory on minimal Lagrangian tori, 
we refer to \cite{Carberry2004} and the references therein.
\sub{Formal Killing fields}
\subb{Polynomial Killing field}
One of the characteristic structural properties of a harmonic torus in a symmetric space
is the existence of an associated polynomial Killing field, \cite{Burstall1993}. 
For the case of a minimal Lagrangian torus in $\C\PP^2$,
a polynomial Killing field can be considered as a higher-order Gau\ss\, map
which takes values in the polynomial loop algebra $\sla(3,\C)[\lambda]$
(here $\lambda$ denotes the spectral parameter).
The corresponding spectral curve is a branched triple covering of $\C\PP^1$,
and a minimal Lagrangian torus linearizes on its Jacobian.
From this  construction,
the relevant spectral curve theory of finite type integration   
can be applied to the study of a minimal Lagrangian torus.
\subb{Formal Killing fields}
For a general minimal Lagrangian surface in a $2_{\C}$-dimensional, non-flat, 
complex space form, it turns out that the local analytic data can be packaged into
a pair of canonical \emph{formal} Killing fields,
 which take values in the formal loop algebra 
$\sla(3,\C)[[\lambda]].$ 
For a minimal Lagrangian torus for example,
each of these formal Killing fields would factor  and reduce  to a polynomial Killing field 
up to scaling by an element in $\C[[\lambda^6]]$.

The original idea of canonical formal Killing fields (for CMC surfaces)
is due to Pinkall and Sterling, \cite{Pinkall1989}.
\sub{Results}
\subb{A pair of canonical formal Killing fields}\label{sec:results1}
We give a systematic derivation of the differential algebraic inductive formulas
for a pair of  canonical formal Killing fields 
for the differential system for minimal Lagrangian surfaces  
in a $2_{\C}$-dimensional, non-flat,
complex space form, Thm.\ref{thm:FKformulaep4}, Thm.\ref{thm:FKformulaea5}.
This clarifies the somewhat ad-hoc  recursion formulas appeared  in  \cite{Pinkall1989}\cite{Wang2013}.
\subb{Jacobi fields and pseudo-Jacobi fields}
In the course of analysis,
we find two different kinds of Jacobi fields for the minimal Lagrangian system.

Jacobi fields (ordinary), which correspond to the generalized symmetries 
of the minimal Lagrangian system,
are defined by the operator, \eqref{eq:Jacobieq},
$$\delx\delxb+\frac{3}{2}\gamma^2.$$
Here the notations $\delx, \delxb$ denote  the covariant derivatives
with respect to the unitary $(1,0)$-form $\xi$, and its complex conjugate $\xib$
respectively, \eqref{eq:delxnotation}, 
and $4\gamma^2$ is the holomorphic sectional curvature of the ambient complex space form.
This shows that, when restricted to a minimal Lagrangian surface, 
Jacobi fields are the eigenfunctions of  Laplacian of the induced Riemannian metric
with eigenvalue $6\gamma^2$.
A relevant observation is that the Jacobi operator depends only on the induced metric of the surface,
similarly as in some of the calibrated geometries, \cite{McLean1998}.

Pseudo-Jacobi fields, which correspond to the generalized symmetries
of the elliptic Tzitzeica equation underlying the minimal Lagrangian system,
are defined by the operator, \eqref{eq:Jacobieq'},
$$\delx\delxb+\frac{1}{2}(\gamma^2+4\vert\ff\vert^2).$$
Here $\vert\ff\vert^2$ is the squared norm of the associated Hopf differential, \S\ref{sec:Hopf}.

Applying the previous results from \cite{Fox2011}\cite{Fox2012} 
on the elliptic Tzitzeica equation,
we give a complete classification of the infinite sequence of  (pseudo) Jacobi fields, 
Thm.\ref{thm:classifyJacobi}, 
Cor.\ref{cor:FKformulaep4}, Cor.\ref{cor:FKformulaea5}.
As a corollary, this implies  that
a minimal Lagrangian torus in $\C\PP^2$ admits a pair of  spectral curves.

\subb{Conservation laws}
We also obtain an infinite sequence of higher-order conservation laws
from the components of the canonical formal Killing fields,
Thm.\ref{thm:FKcvlaw}.
 
\sub{Recursion}
We shall give a description of the two 3-step recursion relations embedded in the structure equation 
for the formal Killing fields.
This is the main technical ingredient for our construction of the canonical formal Killing fields.
\subb{Previous works}
The partial differential equation which locally describes the minimal Lagrangian surfaces
in a $2_{\C}$-dimensional, non-flat,  complex space form is the elliptic Tzitzeica equation, \eqref{eq:Tzitzeica}.
An infinite sequence of higher-order symmetries and conservation laws
were determined in the original works \cite{Fox2011}\cite{Fox2012}
via a recursion modeled on the associated formal Killing field equation.
On the other hand, 
this recursion process left the problem of integrating 
a sequence of  exact differential 1-forms.
We show that they can be solved differential algebraically without involving integration.
\subb{Recursive structure equation}
For the case at hand,
a formal Killing field is a (twisted) loop algebra $\sla(3,\C)[[\lambda]]$-valued function 
$\tb{X}_{\lambda}$ on the infinite prolongation space of the minimal Lagrangian system,
which satisfies the Killing field equation \eqref{eq:KillingEquation}.
In terms of an adapted basis of $\sla(3,\C)[[\lambda]]$,
the recursive component-wise structure equation \eqref{eq:formalKilling_n} 
can be summarized in the following infinite schematic diagram of period 6, 
Fig.\ref{fig:diagram0}.
\begin{figure}[t]
\be\label{recursiondiagram}
\begin{split}
\xymatrix{  & b^{6n+5} \ar[ld]_{\delxb} \ar[rd]^{\delx}  & &&
                 & s^{6n+9} \ar[ld]_{\delxb}  \ar[rd]^{\delx}  & \\
  ...\quad p^{6n+4}  & & f^{6n+6}\ar[r]^{\delx} & a^{6n+7}\ar[r]^{\delx}& g^{6n+8} &&  p^{6n+10}\quad... \\
                    & c^{6n+5}\ar[lu]^{\delxb} \ar[ru]_{\delx}  & &&
                    & t^{6n+9}\ar[lu]^{\delxb}\ar[ru]_{\delx}    &   }
\end{split}\n
\ee
\caption{Recursion diagram for  formal Killing field}
\label{fig:diagram0}
\end{figure}

Here the nodes $\{ \,p^*,b^*,c^*,f^*,a^*,g^*,s^*,t^*\,\}$ are the coefficients of a formal Killing field.
Two nodes are connected by an arrow 
only if  there exists a first order differential relation from the structure equation between them.
From a node, one moves to the right by applying $\delx$, 
and to the left by applying $\delxb$.
The upper indices are designated to match (roughly) the jet orders of the coefficients.
 
The structure equation shows that the right-arrows are differential,
which means that the coefficients $\{f^*, a^*, g^*, p^*\}$ are obtained from 
the left-adjacent term(s) by $\delx$ operation.
But, the left-arrows decrease the jet order. In addition, note that
\begin{align}
\delx p^{6n+4}&= \im\gamma b^{6n+5}+2\im h_3 c^{6n+5},\n\\
\delx g^{6n+8}&= - \im \gamma t^{6n+9}-\im h_3 s^{6n+9}.\n
\end{align}
In order to continue the recursion process,
one needs to solve for the coefficients $\{b^*, c^*, s^*, t^*\}$.
\subb{Differential algebraic inductive formulas}
The main idea of construction is to impose the constraint 
in terms of the characteristic polynomial of $\tb{X}_{\lambda}$;
$$\det(\mu\rm{I}_3+\tb{X}_{\lambda})=\mu^3+\tn{c}\lambda^3,$$
for a constant $\tn{c}\in\C^*$.
This allows one to solve for $\{b^*, c^*, s^*, t^*\}$ differential algebraically
not just by using the left-adjacent terms, but by using all of 
the lower-order terms (the left hand side terms in the diagram above).
The relevant explicit formulas using the truncated formal Killing fields
are given in \S\ref{sec:KFformulae}.
\subb{Jacobi fields and conservation laws}
The structure equation \eqref{eq:formalKilling_n} implies that
the sequence of coefficients $\{ \, a^{6n+7} \}$ are Jacobi fields, 
and  the sequence of coefficients $\{ \, p^{6n+4} \,\}$ are  pseudo-Jacobi fields.
From this, it follows that
the 6-step recursion in the schematic diagram in Fig.\ref{fig:diagram0} 
can be understood as
the union of two 3-step recursions between Jacobi fields and pseudo-Jacobi fields.

The structure equation \eqref{eq:formalKilling_n} also implies that
an infinite sequence of conservation laws can be assembled
from the components of the formal Killing fields,
Eq.\eqref{eq:varphin},  Eq.\eqref{eq:varphin'}.

\two
In this way, the canonical formal Killing fields
provide an efficient method to organize the (infinitely prolonged)
local analytic invariants of minimal Lagrangian surfaces.

\sub{Contents}
In \S\ref{sec:setup}, 
the exterior differential system for minimal Lagrangian surfaces is defined
as a homogeneous differential system on the bundle of Lagrangian 2-planes,
and we record the basic structure equations.
In  \S\ref{sec:classicallaws},
we compute the space of classical Jacobi fields and conservation laws 
which arise from the infinitesimal action of the group of 
K\"ahler  isometries of the ambient space form.
In hindsight, the recursion relations 
for the formal Killing fields 
are already implicit in the structure equation for the classical Killing fields.
In \S\ref{sec:prolongation1},
we determine the structure equation for the infinite prolongation of the minimal Lagrangian system.
A branched triple cover is introduced  for the field extension to accommodate 
the higher-order Jacobi fields.
In \S\ref{sec:prolonglemmas},
we record two useful lemmas on the rigidity property of the associated $\delxb$-equation.
They are applied in \S\ref{sec:Jacobifields}
to prove a  complete classification of the (pseudo) Jacobi fields.
\S\ref{sec:formalKilling} contains the main results of the paper.
We use the determinantal identities from the characteristic polynomial 
to derive the explicit differential algebraic recursion for the  canonical formal  Killing fields
associated with  a pair of natural initial data.
In \S\ref{sec:highercvlaws1},
we also read off the formal Killing fields 
an infinite sequence of higher-order conservation laws.
 
\sub{Remarks}
\subb{}
This  is a continuation of the joint work \cite{Wang2013}.
We suspect that the analysis carried out in   \cite{Wang2013}
can be extended to the  primitive harmonic maps in general.
\subb{}
In the analytic approach,
the transition from the minimal surfaces in the 3-sphere
to the minimal Lagrangian surfaces in $\C\PP^2$ is nontrivial, \cite{Schoen2001}.
From our point of view,
this amounts to replacing the underlying Lie algebra from $\sla(2,\C)$ to $\sla(3,\C)$.
The present work may provide a basis to introduce
the further results from the integrable system theory
to the study of minimal Lagrangian surfaces in this uniform perspective.

\section{Minimal Lagrangian surfaces in a complex space form}\label{sec:setup}
After a brief summary of the structure equation for 
a $2_{\C}$-dimensional complex space form,
we give an analytic description of the differential equation for minimal Lagrangian surfaces
as a homogeneous exterior differential system defined on the $\Un(2)/\SO(2)$-bundle  
of oriented Lagrangian planes.
The basic structure  equations established 
in \S\ref{sec:differential system}, together with their infinite prolongation 
in \S\ref{sec:prolongation1},
will be the basis of our analysis for the minimal Lagrangian system.
\subsection{Complex space form}\label{sec:cspaceform}
We will summarize the basic formulas of K\"ahlerian geometry for a $2_{\C}$-dimensional complex space form. We refer to \cite{Bryant2001} for the further related details. 

In order to avoid repetitions, we agree on the following range for the indices:
\[ 1\leq A, B, C \leq  2.\]
We use the Einstein summation convention for repeated indices.
\subb{$2_{\C}$-dimensional complex space form}\label{sec:2dcspaceform}
Let $M$ be the $2_{\C}$-dimensional simply connected complex space form of constant holomorphic sectional curvature $4\gamma^2$. In the case $4\gamma^2=0$ and $M=\C^2$, it turns out that the minimal Lagrangian surfaces in $M$ are equivalent to the holomorphic curves in $\C^2$ under a different covariant constant complex structure (this is explained by the fact that $\C^2$ is hyperK\"ahler, \cite[p.148]{Joyce2000}).  
We shall 
restrict ourselves to the case
$$\fbox{$\quad \gamma^2\ne0,\quad$}$$
where the differential equation for minimal Lagrangian surfaces is genuinely nonlinear.

The squared expression $\gamma^2$ is introduced 
for the sake of convenience, for the quantity $\gamma$ (which is $\frac{1}{2}$-times the square root of the holomorphic sectional curvature) appears frequently in the analysis.
We adopt the following convention for $\gamma$:
\begin{equation}\label{1deRham1}
\gamma=
\begin{cases}
&+\sqrt{\gamma^2}\\
&+\im \sqrt{- \gamma^2}
\end{cases}\quad \tn{if}\quad
\begin{array}{l}
 \mbox{$\gamma^2>0$} \\
 \mbox{$\gamma^2<0$.}
\end{array} \n
\end{equation}
Here $\im=\sqrt{-1}$ denotes the unit imaginary number.

\subb{Unitary coframe bundle}\label{sec:unitarybundle}
Let $\Un(2)$ be the group of 2-by-2 unitary matrices. Let\two

\centerline{\xymatrix{\Un(2) \ar[r] & \mathcal{F} \ar[d]^{\pi} \\ & M  }}

\two\noi be the principal $\Un(2)$-bundle of unitary coframes.  An element $\mathfrak{u}\in\mcf$ is by definition a Hermitian isometry
\[\mathfrak{u}:T_{\pi(\mathfrak{u})}M\to \C^2,\]
where $\C^2$ is the standard $2_{\C}$-dimensional Hermitian vector space.
The structure group $\Un(2)$ acts on $\mcf$ on the right by
\[\mathfrak{u} \to g^{-1}\circ\mathfrak{u},\quad \mbox{for}\;g\in\Un(2).
\]

Let $(\zeta^1,\,\zeta^2)^t$ be the $\C^2$-valued tautological 1-form on $\mcf$.
The structure group $\Un(2)$ acts on $(\zeta^1,\,\zeta^2)^t$ on the right by
\[  \bp \zeta^1 \\ \zeta^2 \ep  \to g^{-1}  \bp \zeta^1 \\ \zeta^2  \ep,
\quad \mbox{for}\;g\in\Un(2).
\]
By definition, the K\"ahler structure on $M$ is given by the pair
\be\label{1varpi} \begin{array}{rll} 
 \tn{g}:=&   \zeta^A\circ\zetab^A &\tn{(Riemannian metric)},\\
\varpi:=& \frac{\im}{2}  \zeta^A\w \zetab^A  &\tn{(symplectic form)}.
\end{array}\ee
 
\one
Let $\un(2)$ be the space of 2-by-2 skew-Hermitian matrices, which is the Lie algebra of $\Un(2)$. 
There exists a unique $\un(2)$-valued connection 1-form $(\zeta^A_B)$ on $\mcf$ 
such that  the following structure equations hold:
\begin{align}\label{1strt1}
\ed \zeta^A &= - \zeta^A_B \w \zeta^B, \\
\zeta^A_B&=-\zetab^B_A. \n
\end{align}
The curvature 2-forms $\Omega^A_B$ are then defined by 
\be\label{1strt11}
\Omega^A_B=\ed \zeta^A_B+\zeta^A_C\w \zeta^C_B.
\ee
For the case at hand, the curvature forms of the complex space form $M$ are given by
\be\label{eq:1curv}
\Omega^A_B= \gamma^2\left( \zeta^A\w \zetab^B
+\delta^A_{B}\sum_{C=1}^2 \zeta^C\w \zetab^C    \right).
\ee
Here $\delta^A_{B}$ is the Kronecker delta.

\subb{Real structure equation}\label{sec:realstrt}
The above treatment considers $M$ as a $2_{\C}$-dimensional complex Hermitian manifold. 
On the other hand, note the isomorphism
$$\Un(2)=\SO(4)\cap\Sp(2,\R). $$
Here the Lie groups $\Un(2), \SO(4)$ $\tn{(special orthogonal group)}$, 
$\Sp(2,\R)$ (symplectic group) are considered as the real subgroups of $\SL(4,\R)$.
Accordingly, it will be convenient for the analysis of Lagrangian surfaces to consider $M$ as a $4_{\R}$-dimensional real manifold equipped with the pair $(\tn{g}, \varpi)$ given by \eqref{1varpi}.
 
\two
To this end, we decompose the structure equations for $\{ \zeta^A, \zeta^A_B\}$  into the real, and imaginary parts as follows.  

Set
\begin{align}
\zeta^A&=\omega^A+\im \mu^A, \n\\
\bp \zeta^1_1 & \zeta^1_2 \\  \zeta^2_1 & \zeta^2_2 \ep
&=\bp \cdot & \rho \\ -\rho &\cdot \ep
+\im \bp \beta^1-3\gamma^2\theta_0 & \beta^2 \\ \beta^2 & -\beta^1-3\gamma^2\theta_0 \ep,  \n
\end{align}
for the set of real 1-forms $\{ \omega^A, \mu^A, \rho, \beta^A, \theta_0\}$.
In terms of these 1-forms,
the structure equations \eqref{1strt1}, \eqref{1strt11},  \eqref{eq:1curv} 
are written as follows:
\begin{align}\label{1strt0}
\ed\omega^1&=-\rho\w\omega^2+(\beta^1\w\mu^1 +\beta^2\w\mu^2)-3 \gamma^{2}\theta_0\w\mu^1, \\
\ed\omega^2&=+\rho\w\omega^1+(\beta^2\w\mu^1-\beta^1\w\mu^2)-3 \gamma^{2}\theta_0\w\mu^2, \n\\
\ed\mu^1&=-\rho\w\mu^2-(\beta^1\w\omega^1 +\beta^2\w\omega^2)+3 \gamma^{2}\theta_0\w\omega^1, \n\\
\ed\mu^2&=+\rho\w\mu^1-(\beta^2\w\omega^1-\beta^1\w\omega^2)+3 \gamma^{2}\theta_0\w\omega^2,\n \\ 
\n\\
\ed\rho&= \gamma^2(\omega^1\w\omega^2+\mu^1\w\mu^2)+2\beta^1\w\beta^2,  \n\\
\n\\
\ed\theta_0&= -(\mu^1\w\omega^1+\mu^2\w\omega^2), \n\\
\ed\beta^1&=-2\rho\w\beta^2+\gamma^2(\mu^1\w\omega^1-\mu^2\w\omega^2),\n\\
\ed\beta^2&=+2\rho\w\beta^1+\gamma^2(\mu^2\w\omega^1+\mu^1\w\omega^2).\n
\end{align}

\subsection{Exterior differential system}\label{sec:differential system}
With this preparation, we proceed to describe the differential system for minimal Lagrangian surfaces.
\subb{Bundle of Lagrangian 2-planes}\label{sec:Lagbundle}
An immersed  surface $\x: \Sigma\hook M$ is Lagrangian if 
$$\x^*\varpi=0.$$
This is by definition a first order constraint on $\Sigma$ 
that its tangent space at each point is a Lagrangian subspace of the tangent bundle $TM$. 

Under the standard representation, the unitary group $\Un(2)$ acts transitively on the set of oriented Lagrangian subspaces ($2$-planes) in $\C^2$, 
with $\SO(2)=\Un(2)\cap \SL(2,\R)$ as the stabilizer subgroup. 
The Grassmannian of Lagrangian subspaces  in dimension 2 is 
the homogeneous space,
$$\tn{Lag}(\C^2)=\Un(2)/\SO(2).$$
From  the general theory of principal bundles, it follows that 
the $\Un(2)/\SO(2)$-bundle of oriented Lagrangian 2-planes in $TM$ is given by
$$X:=\mcf/\SO(2)\to M.$$

This is summarized by the following diagram:

\two\two
\centerline{\xymatrix{ &&\mcf \ar[dl]_-{\SO(2)}\ar[dd]^{\Un(2)}  \\
  & X \ar[dr]_-{\Un(2)/\SO(2)}  &   \\ & &M}}\two\two

\subb{Differential system for  Lagrangian surfaces}\label{sec:Lagsystem}
In view of the diagram, 
an immersed oriented Lagrangian surface in $M$ admits 
a unique tangential lift to $X$. 
Such tangential lifts are characterized as the integral surfaces 
of the canonical contact differential system on $X$.

This is expressed analytically as follows. Consider the real structure equation \eqref{1strt0}.
By a standard moving frame analysis,
set the contact ideal $\mci_0$ on $X$ generated by
\begin{equation}\label{1ideal0}
\mci_0:=\langle\, \mu^1, \mu^2,   \ed\mu^1, \ed\mu^2 \,\rangle.
\end{equation}
Note that the differential ideal $\mci_0$  is originally defined on $\mcf$.
But, it is invariant under the induced action by the structure group $\SO{(2)}$ 
and $\mci_0$ descends on $X$ as a well defined differential ideal.  
By construction,
an immersed oriented  integral surface of $\mci_0$ in $X$ corresponds to 
a possibly singular oriented Lagrangian surface in $M$.

An immersed integral surface of $\mci_0$ in $X$ may become singular under the projection $X\to M$ to the original complex space form. 
The formulation of Lagrangian surfaces in $M$ 
in terms of the canonical contact differential system $\mci_0$ on $X$ 
naturally extends the set of admissible Lagrangian surfaces, see \S\ref{sec:doublecover}.

\subb{Differential system for  minimal Lagrangian surfaces}\label{sec:mLagsystem}
The condition that an immersed Lagrangian surface  is minimal (i.e., its mean curvature vector vanishes) is a second order constraint. We therefore need to augment the contact ideal $\mci_0$ to express this additional minimality condition.

From Eqs.\eqref{1strt0}, we claim that this is equivalent to the vanishing of the connection 1-form $\theta_0$,  $$\theta_0=0.$$ 
To see this, consider the structure equation \eqref{1strt0} adapted to a Lagrangian surface 
$\Sigma\hook M$.  With the 1-forms $\{\mu^1, \mu^2\}$ being set to 0,  
the  induced Riemannian metric on  $\Sigma$ is given by
\be
\rm{I}:=(\omega^1)^2+(\omega^2)^2. \n
\ee
For the condition of minimality,  note that the second fundamental form of $\Sigma$ can be identified with the symmetric cubic differential
\be 
\underline{\ff}:=  \omega^A \circ \tn{Im}(\zeta^A_B) \circ \omega^B. \n
\ee
The mean curvature vector vanishes when the corresponding trace vanishes, 
$$\tr_{\rm{I}}(\underline{\ff})=0.$$
This is equivalent to the vanishing of the 1-form $\theta_0$, which is up to constant scale the trace of  $\tn{Im}(\zeta^A_B).$
\begin{prop}\label{prop:mLagdefi}
Let $X\to M$ be the $\Un(2)/\SO(2)$-bundle of oriented Lagrangian 2-planes.
Let $\mci_0$ be the canonical contact differential system on $X$, \eqref{1ideal0}.
The differential system for minimal Lagrangian surfaces is given by
\begin{equation}\label{1ideal1}
\mci:=\langle\, \mu^1, \mu^2, \theta_0,   \phi^+, \phi^- \,\rangle,
\end{equation}
where
\begin{align}
\phi^+&=\beta^1\w\omega^1+\beta^2\w\omega^2, \n \\
\phi^-&=\beta^2\w\omega^1-\beta^1\w\omega^2. \n
\end{align}
The structure equation \eqref{1strt0} shows that  the differential ideal $\mci$, originally defined on $\mcf$, is invariant under the induced action by the structure group $\SO{(2)}$. 
The differential ideal $\mci$ is  well defined on $X$. 

An immersed minimal Lagrangian surface in $M$ admits a unique tangential lift to $X$ 
as an integral surface of $\mci$.
Conversely, an immersed integral surface of $\mci$ in $X$ projects to 
a possibly singular (branched) minimal Lagrangian surface in $M$. 
\end{prop}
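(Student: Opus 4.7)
The plan is to verify three things in sequence: that the ideal $\mci$ on $\mcf$ is invariant under the induced $\SO(2)$-action and generated by $\SO(2)$-semi-basic forms, so that it descends to $X=\mcf/\SO(2)$; that an immersed minimal Lagrangian surface in $M$ lifts canonically and uniquely to an integral surface of $\mci$; and conversely, that an integral surface of $\mci$ projects to a (possibly singular) minimal Lagrangian surface in $M$.

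For the descent, I would exploit the $\SO(2)$-weight decomposition of the structure forms appearing in \eqref{1strt0}. Under the subgroup $\SO(2)\subset\Un(2)$, the tautological 1-forms $(\omega^A)$ and $(\mu^A)$ each transform as the standard $2$-dimensional real (weight-$1$) representation; the forms $\theta_0$ and $\rho$ --- the trace and skew-symmetric parts of the Hermitian connection --- are invariant; and $(\beta^1,\beta^2)$ transforms by the weight-$2$ real representation coming from the traceless symmetric part of $\un(2)$. Consequently, $\theta_0$ is invariant, $\langle\mu^1,\mu^2\rangle$ is an invariant $2$-plane, and the pair $\phi^\pm$ spans an invariant $2$-plane, as seen from the complex combination $\phi^++\im\phi^-=(\beta^1+\im\beta^2)\w(\omega^1-\im\omega^2)$, which transforms by a single phase. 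Since no generator of $\mci$ contains the $\SO(2)$-connection form $\rho$, each is semi-basic for the fibration $\mcf\to X$, and $\mci$ descends to a well-defined differential ideal on $X$.

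For the correspondence with integral surfaces, a minimal Lagrangian immersion $\x:\Sigma\hra M$ admits the unique tangential lift $\x^{(1)}:\Sigma\to X$, $p\mapsto\x_*(T_p\Sigma)\in\tn{Lag}(T_{\x(p)}M)$; the Lagrangian condition gives $\x^{(1)*}\mu^A=0$, and the identification of $\theta_0$ as a constant multiple of the trace of $\tn{Im}(\zeta^A_B)$ --- which, when restricted to $\Sigma$, encodes the mean curvature vector --- gives $\x^{(1)*}\theta_0=0$ under the minimality hypothesis. Differentiating $\mu^A=0$ along $\Sigma$ and using \eqref{1strt0} then forces $\x^{(1)*}\phi^\pm=0$, so $\x^{(1)}$ is an integral surface of $\mci$. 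Conversely, an integral surface $\iota:\Sigma\hra X$ of $\mci$ satisfies $\iota^*\mu^A=\iota^*\theta_0=0$, which under $\pi:X\to M$ yields a Lagrangian surface (branched along the locus where $(\omega^1,\omega^2)$ degenerate on $\Sigma$) with vanishing mean curvature.

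The main subtle point is the equivalence between minimality and $\theta_0|_\Sigma=0$: one pulls $\tn{Im}(\zeta^A_B)$ back to $\Sigma$, applies Cartan's lemma to $\ed\mu^A|_\Sigma=0$ to express its entries as symmetric combinations of $\omega^A$, identifies the resulting cubic differential with $\underline{\ff}$, and computes $\tr_{\rm I}(\underline{\ff})$; the scalar identity $\tr\,\tn{Im}(\zeta^A_B)=-6\gamma^2\theta_0$ then gives the claim. The presence of $\phi^\pm$ among the generators --- though they vanish automatically on any Lagrangian surface with $\theta_0=0$ --- is necessary for the differential closure of $\langle\mu^1,\mu^2,\theta_0\rangle$, as visible from the structure equation $\ed\mu^1=-\rho\w\mu^2-\phi^++3\gamma^2\theta_0\w\omega^1$.
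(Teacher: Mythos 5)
Your proposal is correct and follows essentially the same route as the paper, whose formal proof is only a pointer to \S\ref{sec:doublecover} with the substance (the identification of minimality with $\theta_0=0$ via the trace of $\tn{Im}(\zeta^A_B)$, and the $\SO(2)$-invariance read off from \eqref{1strt0}) contained in the surrounding discussion. Your explicit weight decomposition under $\SO(2)$ and the use of Cartan's lemma merely spell out what the paper leaves implicit; no gap.
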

\begin{proof}
For the last sentence, see \S\ref{sec:doublecover}. 
\end{proof}
Note from the structure equation that 
$$\ed \theta_0, \ed\mu^A \equiv 0\mod\mci,$$
and $\mci$ is differentially closed.

In terms of Cartan's theory of exterior differential systems,   
the differential system $(X,\mci)$ is involutive and the local moduli space of solutions depends on two arbitrary real functions of 1 variable, \cite{Bryant1991}.
 
\subsubsection{Complexified structure equation}\label{sec:complexify}
The differential system for minimal Lagrangian surfaces  under consideration 
is an integrable extension over the elliptic Tzitzeica equation, \cite{Fox2011}.
The  characteristic directions are complex and they induce a complex structure 
on a minimal Lagrangian surface.  
In order to utilize this property, we introduce another set of 
complex differential forms on $\mcf$ adapted to $\mci$.

\two
Set
\begin{align}\label{eq:cforms}
\xi&:=\omega^1+\im\,\omega^2,   \\
\theta_1&:=\mu^1-\im\mu^2, \n\\
\eta_2&:=\beta^1-\im\beta^2.\n
\end{align}
Note
\[\phi^+  - \im \phi^-=\eta_2\w\xi,
\]
and the ideal \eqref{1ideal1} can be written as
\be\label{1ideal2}
\mci=\langle \theta_0,  \theta_1,  \eta_2\w\xi \rangle.
\ee

In terms of these complex 1-forms,  the structure equation \eqref{1strt0} simplifies to
the following set of equations:
\begin{align}\label{eq:strt2}
\ed \xi &= \im \rho \w \xi -3\gamma^2 \theta_0\w\thetab_1-\theta_1\w\etab_2,\\
\ed \theta_0&=-\frac{1}{2}\left( \theta_1\w\xi+\thetab_1\w\xib  \right),   \n\\
\ed \theta_1 &=-\im\rho\w\theta_1-\eta_2\w\xi+3\gamma^2\theta_0\w\xib,\n \\
\ed \eta_2 &=-2\im\rho\w\eta_2+\gamma^2\theta_1\w\xib, \n \\
\ed \rho&=\frac{\im}{2}\left(\gamma^2\xi\w\xib-2\eta_2\w\etab_2-\gamma^2\theta_1\w\thetab_1\right). \n
\end{align}
From now on, the differential analysis for minimal Lagrangian surfaces will be carried out based on this structure equation.
 
\two
Let us mention here a relevant notation which will be frequently used. 
For a scalar function $f:\mcf\to\C$, the covariant derivatives are 
written in the upper-index notations,  
\be\label{eq:notation0}
\ed f \equiv f^{\xi}\xi+f^{\xib}\xib+f^0\theta_0+f^1\theta_1+f^{\bar{1}}\thetab_1+f^2\eta_2+f^{\bar{2}}\etab_2
\mod \rho.
\ee

\subsection{Hopf differential}\label{sec:HopfBonnet}
The induced local geometric structures on a minimal Lagrangian surface 
consists of a triple of data called admissible triple, Defn.\ref{defn:Bonnettriple} in the below.
In particular, it contains a holomorphic cubic differential called \emph{Hopf differential}
which arises as a complexified version of the second fundamental form.
Hopf differential will play an important role 
in our study of the  minimal Lagrangian system.
\subb{Hopf differential}\label{sec:Hopf}
Let $\x:\Sigma\hook X$ be an immersed integral surface of the differential system for minimal Lagrangian surfaces.
This is expressed analytically by  
$$\theta_0, \theta_1=0, \; \eta_2\w\xi=0,$$
on the induced $\SO(2)$-bundle $\x^*\mcf\to \Sigma$.
It follows that the structure equation \eqref{eq:strt2} restricted to $\x^*\mcf$ becomes
\begin{align}\label{eq:strtx}
\ed\xi&=\im \rho\w\xi,\\
0&=\eta_2\w\xi, \n \\
\ed\eta_2&=-2\im \rho\w\eta_2, \n \\
\ed\rho&=\frac{\im}{2}\left(\gamma^2\xi\w\xib - 2\eta_2\w\etab_2\right).\n
\end{align} 
In particular, the cubic differential
$$\eta_2\circ \xi^2$$
is invariant under the action by the structure group $\SO(2)$ and becomes a well defined holomorphic cubic differential on $\Sigma.$ 

\two
Let $K\to\Sigma$ denote the canonical line bundle of $(1,0)$-forms.
\begin{defn}\label{Hopf}
Let $\x: \Sigma \hook X$ be an immersed integral surface of the differential system for minimal Lagrangian surfaces.  Consider the induced structure equation \eqref{eq:strtx}.
The \tb{Hopf differential} of $\x$  is the holomorphic cubic differential
\begin{equation}\label{eq:Hopf}
\ff=\eta_2 \circ \xi^2  \in H^0(\Sigma, K^3).
\end{equation}
The \tb{umbilic divisor} $\mcu =(\ff)_0$ is defined as the zero divisor of $\ff$. 
\end{defn}
When $\Sigma$ is compact, by Riemann-Roch theorem we have  
$$\tn{deg}(\mcu)=6\, \tn{genus}(\Sigma)-6.$$
\subb{Admissible triple}\label{sec:Bonnet}
Suppose that the integral surface $\Sigma\hook X$ is the tangential lift of an immersed oriented minimal Lagrangian surface in $M$. 
By definition of the tautological forms on $\mcf$, this implies the independence condition 
$$\xi\w\xib\ne 0,$$ 
and the induced Riemannian metric on $\Sigma$ is given by
$$\rm{I}=\xi\circ\xib.$$
It follows  that  $\x^*\mcf\to \Sigma$  can be identified with 
the principal $\SO(2)$-bundle of the induced metric, and 
that $\xi$ is the tautological unitary $(1,0)$-form,
and $\rho$ is the Levi-Civita connection form. 

From the second equation of \eqref{eq:strtx},  there exists a coefficient $h_3$ defined on $\x^*\mcf$ such that
\be\label{eq:etah3}\eta_2= h_3 \xi.\ee
The Hopf differential is now written as $$\ff=h_3\xi^3.$$
 
From the fourth equation of \eqref{eq:strtx}, the Gau\ss\, equation,
the curvature $R$ of the induced metric $\rm{I}$ satisfies the compatibility equation
\be\label{eq:Gauss}
R=\gamma^2- 2 h_3\hb_3.  
\ee

\one
Summarizing the analysis so far, we give a definition of the compatible Bonnet data for minimal Lagrangian surfaces.
\begin{defn}\label{defn:Bonnettriple}
Let $M$ be a $2_{\C}$-dimensional complex space form of constant holomorphic sectional curvature $4\gamma^2.$
An \tb{admissible triple} for a  minimal Lagrangian surface in $M$ consists of  a Riemann surface, a conformal metric, and a holomorphic cubic differential which satisfy the compatibility equation \eqref{eq:Gauss}.
\end{defn}

An analogue of the classical Bonnet theorem  can be stated as follows. 
The proof is by a standard ODE argument and is omitted.
\begin{thm}\label{thm:Bonnet}
Let $\Sigma$ be a Riemann surface. Let $( \rm{I}, \ff)$ be a pair of a conformal metric and a holomorphic cubic differential on $\Sigma$ such that $(\Sigma, \rm{I}, \ff)$ form an admissible triple for a minimal Lagrangian surface in a complex space form $M$. Let $\pi: \widetilde{\Sigma}\to\Sigma$ be the simply connected universal cover.  Then there exists a minimal Lagrangian  immersion $\tilde{\x}: \widetilde{\Sigma}\hook M$ which realizes $(\pi^*\rm{I}, \pi^*\ff)$ as the induced Riemannian metric and the Hopf differential.
Such an immersion $\tilde{\x}$ is unique up to motion 
by the K\"ahler isometries of the ambient complex space form $M$.
\end{thm}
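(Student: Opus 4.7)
The plan is to follow the standard moving frame / Cartan--Darboux method. Pull the data back to the simply connected cover $\pi:\widetilde{\Sigma}\to\Sigma$ and lift to the principal $\SO(2)$-bundle $P\to\widetilde{\Sigma}$ of oriented orthonormal coframes of $\pi^*{\rm I}$. On $P$ one has the tautological unitary $(1,0)$-form $\xi$ and the Levi-Civita connection $\rho$, satisfying $\ed\xi=\im\rho\w\xi$ and $\ed\rho=\tfrac{\im}{2}R\,\xi\w\xib$, with $R$ the Gau\ss\ curvature. Since $\pi^*\ff\in H^0(\widetilde{\Sigma},K^3)$, one can write $\pi^*\ff=h_3\xi^3$ for a well-defined function $h_3:P\to\C$ of conformal weight $-3$; holomorphicity forces $\ed h_3\equiv-3\im h_3\rho\pmod{\xi}$.

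Define additional $1$-forms on $P$ by
\[
\theta_0:=0,\quad \theta_1:=0,\quad \eta_2:=h_3\xi.
\]
The key verification is that the full system \eqref{eq:strt2} of structure equations of the unitary bundle $\mcf\to M$ is then satisfied on $P$ by the tuple $\{\xi,\xib,\rho,\theta_0,\theta_1,\thetab_1,\eta_2,\etab_2\}$. The equations for $\ed\theta_0$ and $\ed\theta_1$ reduce to $0=0$ because $\theta_0=\theta_1=0$ and $\eta_2\w\xi=h_3\,\xi\w\xi=0$; the equation for $\ed\eta_2$ follows from the weight and holomorphy of $h_3$; and the equation for $\ed\rho$ becomes
\[
\tfrac{\im}{2}R\,\xi\w\xib=\tfrac{\im}{2}(\gamma^2-2h_3\hb_3)\,\xi\w\xib,
\]
which is exactly the Gau\ss\ compatibility \eqref{eq:Gauss}. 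Hence on the product $P\times\mcf$ the Pfaffian system obtained by equating the forms $(\xi,\rho,\theta_0,\theta_1,\eta_2)$ on the two factors (and their conjugates) is Frobenius-integrable, with leaves of dimension $\dim P$.

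Since $P$ is simply connected, the maximal integral leaf through any prescribed base point in $P\times\mcf$ is the graph of a smooth $\SO(2)$-equivariant map $\Psi:P\to\mcf$; passing to the base yields the immersion $\tilde{\x}:\widetilde{\Sigma}\to M$. By construction $\Psi^*\theta_0=\Psi^*\theta_1=0$ and $\Psi^*(\eta_2\w\xi)=0$, so $\tilde{\x}$ is an integral surface of $\mci$ whose induced metric and Hopf differential reproduce $\pi^*{\rm I}$ and $\pi^*\ff$. For uniqueness, given two such immersions one lifts each to $\SO(2)$-equivariant maps into $\mcf$ and composes one with the unique K\"ahler isometry of $M$ that aligns chosen initial frames; the two lifts then agree at a point and, as integral leaves of the same Frobenius distribution on $P\times\mcf$, must coincide globally. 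The only substantive step is the structure-equation bookkeeping of the second paragraph, where all compatibility concentrates in \eqref{eq:Gauss}; the rest is the routine Maurer--Cartan/Frobenius construction, which explains why the author is content to omit it.
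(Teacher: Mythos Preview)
Your approach is correct and is precisely the ``standard ODE argument'' the paper alludes to and omits: assemble the Maurer--Cartan form \eqref{eq:psiform} on the frame bundle of $(\widetilde{\Sigma},\pi^*{\rm I})$ with $\theta_0=\theta_1=0$, $\eta_2=h_3\xi$, verify that the structure equations \eqref{eq:strt2} close (all compatibility reducing to the Gau\ss\ equation \eqref{eq:Gauss}), and then integrate.

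One small slip: the $\SO(2)$-frame bundle $P$ over $\widetilde{\Sigma}$ is an $S^1$-bundle and is \emph{not} simply connected in general (e.g.\ $P\cong\widetilde{\Sigma}\times S^1$ when $\widetilde{\Sigma}$ is a disk or plane). So the sentence ``Since $P$ is simply connected \ldots'' needs a minor rephrasing. The usual fix is the one you already have the ingredients for: the Pfaffian system on $P\times\mcf$ is $\SO(2)$-equivariant (for the diagonal action), so a local graph $\Psi$ over a slice of $P$ extends by equivariance to the full $\SO(2)$-orbit, and simple connectedness of the \emph{base} $\widetilde{\Sigma}$ then globalizes. Equivalently, integrate the $\g$-valued $1$-form $\psi$ directly on $\widetilde{\Sigma}$ after choosing a local gauge, and observe that gauge changes act by right $\SO(2)$-multiplication, hence the projection to $M$ is well defined. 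With that adjustment your argument is complete.
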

\subsubsection{Cube root of Hopf differential}
The analysis of the canonical formal Killing fields to be addressed in \S\ref{sec:formalKilling} 
will inevitably involve the use of the object
$$\sqrt[3]{h_3},$$
or, equivalently, the cube root of Hopf differential
$$\sqrt[3]{\ff}.$$
It is generally a multi-valued holomorphic 1-form on a minimal Lagrangian surface.
In order to better accommodate this,
we introduce the triple cover of a minimal Lagrangian surface defined by Hopf differential.
\begin{defn}\label{defn:doublecover}
Let $\x:\Sigma\hook X$ be an immersed integral surface of the differential system for minimal Lagrangian surfaces. Let $\ff\in H^0(\Sigma, K^3)$ be the Hopf differential \eqref{eq:Hopf}.
The \tb{triple cover} 
$$\nu:\Sigmah\to\Sigma$$ 
associated with $\ff$ is the Riemann surface of the complex curve
\[
\Sigma'= \left\{  \kappa  \in K\; \vert \; \kappa^3=\ff \right\} \subset K.
\]

The \tb{cube root $\omega=\sqrt[3]{\ff}$} of the Hopf differential is the holomorphic 1-form on $\Sigmah$ obtained by the pull-back of  restriction of the tautological 1-form on $K$ to $\Sigma'$.
By construction, the projection $\nu$ is a triple covering branched over
the umbilics of degree $1$ or $2\mod 3$.  
\end{defn} 

 
\section{Classical Killing fields} \label{sec:classicallaws} 
Let $$G=\SU(3),  \quad \tn{or}\quad \SU(1,2)$$ be the group of 
K\"ahler isometries of the complex space form $M$, 
depending on the sign of the holomorphic sectional curvature   
$4\gamma^2.$ 
The induced action of $G$ on $X$ is transitive,
and  by construction 
the differential system $(X,\mci)$ is $G$-invariant, i.e.,
the group $G$ acts as a symmetry of the differential system  for minimal Lagrangian surfaces.
In this section, we examine the  classical Killing fields 
generated by the corresponding infinitesimal  action of the Lie algebra of $G$,
$$\g = \su(3), \quad \tn{or}\quad\su(1,2).$$ 

\one
The structure equation for classical Killing fields reveals its recursive, symmetrical  property
when it is written in terms of an adapted basis of $\g$, 
Eq.\eqref{eq:classicalKF}.
From this,  we extract two kinds of Jacobi fields called \emph{classical Jacobi fields}, and
their variants \emph{classical pseudo-Jacobi fields},
as well as the recursion relations between them, \S\ref{sec:recursionrel}.

The structure equation  also shows that 
there exists the eight dimensional space of classical conservation laws. 
Combining these results, 
we deduce that Noether's theorem holds 
and there exists a canonical isomorphism 
between the  classical (pseudo) Jacobi fields and 
the  classical (local) conservation laws, Cor.\ref{cor:Noether}. 

\sub{Structure equation}\label{sec:classicalstrt}
The Killing field equation, \eqref{eq:KillingEquation0} in the below, characterizes 
the classical Killing fields
generated by the infinitesimal action of the Lie algebra $\g$ on 
the homogeneous space $X$.
When restricted to a minimal Lagrangian surface, it becomes the recursive structure equation \eqref{eq:classicalKF}. 

We will find that Eq.\eqref{eq:classicalKF} encodes many of the important local structural properties of the minimal Lagrangian system. In particular, 
a pair of the embedded recursion relations   will serve as the model for 
the construction of the canonical formal Killing fields.
\subb{Complexified Maurer-Cartan form}
Recall the complexified differential forms \eqref{eq:cforms} and their structure equation  \eqref{eq:strt2}.
The left invariant, $\g$-valued Maurer-Cartan form of $G$ can be written in terms of these 1-forms by
\be\label{eq:psiform}
\psi=\psi_{+}+\psi_{0}+\psi_{-},
\ee
where
\be
\psi_{-} =\frac{1}{2} \left[ \begin {array}{ccc} 
\cdot &-\gamma(\xi-\im\theta_1) & \im \gamma( \xi+\im \theta_1)  \n \\
 \gamma(\xi+\im\theta_1) &\im \eta_2 & -  \eta_2  \\
 -\im\gamma(\xi-\im\theta_1) &- \eta_2 &- \im \eta_2 \end {array} \right],
\ee 
\be \psi_{0} =
\left[ \begin {array}{ccc} 
2\im\gamma^2\theta_0 &\cdot & \cdot  \n \\
\cdot &-\im\gamma^2\theta_0& \rho   \\
\cdot  & -\rho &-\im\gamma^2\theta_0
\end {array} \right], \n
\ee
\be \psi_{+}=\frac{1}{2} \left[ \begin {array}{ccc} 
\cdot &-\gamma(\xib-\im\thetab_1) &- \im \gamma( \xib+\im \thetab_1)  \n \\
 \gamma(\xib+\im\thetab_1) &\im \etab_2 &   \etab_2  \\
 \im\gamma(\xib-\im\thetab_1) & \etab_2 &- \im \etab_2 \end {array} \right].
\ee $\,$

\noi   It satisfies the structure equation $$\ed\psi+\psi\w\psi=0.$$
\subb{Classical Killing fields}
\begin{defn}
Let $\mcf/\SO(2)=X\to M$ be the bundle of oriented Lagrangian planes.
A \tb{classical Killing field} is an $\SO(2)$-equivariant,
$\g$-valued function $\tb{X}$ on $\mcf$ which satisfies the Killing field equation 
\be\label{eq:KillingEquation0}
\ed \tb{X} +[\psi, \tb{X}]=0.
\ee
\end{defn}
By definition, the space of classical Killing fields is  isomorphic to $\g$. 

\begin{defn}
Let $(X,\,\mci)$ be the differential system for minimal Lagrangian surfaces. A vector field $V \in  H^0(TX)$ is a \tb{classical symmetry} if it preserves the differential ideal $\mci$ under the Lie derivative,
$$\mcl_V \mci \subset \mci.$$
The algebra of classical symmetries is denoted by $\mathfrak{S}^{(0)}.$
\end{defn}
It is clear that a classical Killing field generates a classical symmetry.
In fact,  a classical symmetry is necessarily generated by a classical Killing field. We thus have;
\begin{prop}\label{prop:classicalsymmetrytocv}
$$ \mathfrak{S}^{(0)}\simeq \g.$$
\end{prop}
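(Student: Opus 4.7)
The plan is to establish both directions of the claimed isomorphism. The easy direction constructs an injection $\g \hookrightarrow \mathfrak{S}^{(0)}$. Each $Y \in \g$ generates a 1-parameter subgroup of $G$ acting on $M$ by K\"ahler isometries; this action lifts canonically to $\mcf$ and descends to $X$, yielding a vector field $V_Y$. Since the defining forms $\mu^1, \mu^2, \theta_0, \phi^{\pm}$ of $\mci$ in Proposition \ref{prop:mLagdefi} are built solely from the $G$-invariant K\"ahler data (the metric, symplectic form, and trace of the second fundamental form), the flow of $V_Y$ preserves $\mci$. Injectivity of $Y \mapsto V_Y$ follows from the (almost) effective transitive $G$-action on $X$.

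For the nontrivial direction $\mathfrak{S}^{(0)} \to \g$, given $V \in \mathfrak{S}^{(0)}$ I would lift it to an $\SO(2)$-equivariant vector field $\tilde V$ on $\mcf$ (such a lift exists and is unique modulo the vertical $\so(2)$-direction, which is itself tautologically a Killing field) and form the $\g$-valued function
$$\tb{X} := \psi(\tilde V)$$
using the Maurer-Cartan form \eqref{eq:psiform}. The aim is to show that (i) $\tb{X}$ is $\SO(2)$-equivariant, and (ii) $\tb{X}$ satisfies the Killing equation \eqref{eq:KillingEquation0}. Once (ii) is established, $\tb{X}$ is by definition a classical Killing field, and one checks that $V$ coincides with the vector field $V_{\tb{X}}$ associated with $\tb{X}$, giving the desired inverse map.

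For the Killing equation, Cartan's magic formula combined with $\ed\psi = -\psi \wedge \psi$ yields
$$(\ed \tb{X} + [\psi, \tb{X}])(W) = (\mcl_{\tilde V} \psi)(W)$$
for every vector field $W$, so the Killing equation is equivalent to $\mcl_{\tilde V}\psi = 0$. The hypothesis $\mcl_V \mci \subset \mci$, expanded via the structure equation \eqref{eq:strt2}, already forces the components of $\mcl_{\tilde V}\psi$ lying in the $\g$-subspaces dual to $\theta_0$, $\theta_1$, $\thetab_1$ to vanish mod $\mci$. A finite-type propagation then spreads this vanishing: since $(X,\mci)$ is involutive with trivial Cauchy characteristics (as noted after Proposition \ref{prop:mLagdefi}), the resulting linear overdetermined PDE system on the coefficients of $\tb{X}$ closes up after finitely many differentiations, with solution space of dimension at most $\dim\g$.

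The main technical obstacle is the 2-form generator $\eta_2 \wedge \xi$ of $\mci$, which is not Pfaffian: a naive coefficient match leaves the component of $\mcl_{\tilde V}\psi$ valued in the $\eta_2$-direction undetermined. To pin it down, I would apply $\ed$ to $\mcl_{\tilde V}(\eta_2 \wedge \xi) \equiv 0 \mod \mci$ and exploit the last two equations of \eqref{eq:strt2} together with the previously-determined components to isolate the missing coefficient. With $\mcl_{\tilde V}\psi = 0$ in hand, the Maurer-Cartan equation forces $\tb{X}$ to be locally constant, and $G$-equivariance of the construction identifies it with an element of $\g$. Matching dimensions on both sides then upgrades the injection $\g \hookrightarrow \mathfrak{S}^{(0)}$ to the isomorphism.
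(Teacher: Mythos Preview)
The paper omits the proof entirely, saying only that it ``follows by a direct computation.'' Your proposal supplies a sound conceptual scaffold for that computation: the reformulation $\mcl_{\tilde V}\psi = 0 \Leftrightarrow \ed\tb{X} + [\psi,\tb{X}] = 0$ via Cartan's formula is correct, and the two-step strategy (inject $\g \hookrightarrow \mathfrak{S}^{(0)}$, then bound $\dim\mathfrak{S}^{(0)} \le \dim\g$ by showing the symmetry conditions close to a finite-type linear system) is exactly the right shape.

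One point to tighten: the passage from $\mcl_V\mci \subset \mci$ to $\mcl_{\tilde V}\psi = 0$ is where the genuine work sits, and your sketch leaves it vague. The symmetry hypothesis only controls the $\theta_0,\theta_1,\thetab_1$ components of $\mcl_{\tilde V}\psi$ modulo $\mci$, plus the 2-form condition on $\eta_2\w\xi$; the $\xi,\xib,\rho,\eta_2,\etab_2$ components are a priori unconstrained. What you need is not that these vanish outright, but that after differentiating the available conditions and using \eqref{eq:strt2}, the resulting linear algebraic relations among the coefficients of $\tilde V$ (and their first derivatives) leave only an $8$-dimensional solution space. This is the ``direct computation'' the paper invokes, and your invocation of finite-type closure is correct in spirit but would need the explicit tableau to be convincing. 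Also, the claim about ``trivial Cauchy characteristics'' is not actually stated after Proposition~\ref{prop:mLagdefi}; you would need to verify it separately (it is true, since the coframe $\{\xi,\xib,\theta_0,\theta_1,\thetab_1,\eta_2,\etab_2\}$ spans and no direction annihilates all generators of $\mci$), though in fact the dimension count does not really require it.
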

The proof follows by a direct computation, and is omitted.
\subb{Recursive structure equation}
For the ensuing analysis, it will be useful to have Eq.\eqref{eq:KillingEquation0} written component-wise.
Consider the explicit decomposition of the Lie algebra $\g$ in Fig.\ref{fig:Xdecompo0}.\\ 

\begin{figure}[h]
$$
\left[ \begin {array}{ccc} -2\im \tb{a} &\tb{b}+\tb{f}+\tb{g}-\tb{s}
&\im \tb{b}- \im \tb{f}+\im \tb{g}+\im \tb{s}\\
\noalign{\medskip}-\tb{b}+\tb{f}+\tb{g}+\tb{s} & \im \tb{c}+\im \tb{a}-\im \tb{t}&-\tb{p}+\tb{c}+\tb{t}\\
\noalign{\medskip}-\im \tb{b}-\im \tb{f} +\im \tb{g}-\im \tb{s}
&\tb{p}+\tb{c}+\tb{t}&-\im \tb{c}+\im \tb{a}+\im \tb{t}\end {array} \right]
$$
\caption{Decomposition of the Lie algebra $\g=\su(3),\;\tn{or}\;\, \su(1,2)$}
\label{fig:Xdecompo0}
\end{figure}

\noi
Here $\{ \, \tb{p}, \tb{b}, \tb{c}, \tb{f}, \tb{a}, \tb{g}, \tb{s}, \tb{t}   \,\}$ are the scalar coefficients which satisfy the following reality conditions:
\be
\g=\begin{cases}
\su(3)    \\ \su(1,2)  
\end{cases} \tn{then}\quad \begin{aligned} 
&\tb{a}=\ol{\tb{a}},\tb{p}=\ol{\tb{p}},  \tb{t}=-\ol{\tb{c}}, \tb{s}=-\ol{\tb{b}},  \tb{g}=-\ol{\tb{f}},  \\
&\tb{a}=\ol{\tb{a}},\tb{p}=\ol{\tb{p}},  \tb{t}=-\ol{\tb{c}}, \tb{s}=+\ol{\tb{b}},  \tb{g}=+\ol{\tb{f}}.
\end{aligned}\n
\ee

The Killing field equation \eqref{eq:KillingEquation0} then implies
\begin{align}\label{eq:classicalKF}
 \ed  \tb{p}  &\equiv(\im\gamma  \tb{b}+2\im h_3 \tb{c}  )\xi
                          + ( \im\gamma \tb{s}+2\im \hb_3 \tb{t})\xib,     \\
 \ed   \tb{b} +\im \tb{b}\rho
                         &\equiv\im h_3  \tb{f} \xi+ \frac{\im}{2}\gamma  \tb{p} \xib,    \n\\
 \ed   \tb{c} -2 \im \tb{c}\rho&\equiv \im \gamma  \tb{f}\xi  +\im \hb_3  \tb{p}  \xib,    \n\\
 \ed  \tb{f} -\im  \tb{f}\rho  &\equiv\frac{3\im}{2}\gamma \tb{a} \xi
                                                          +(\im \gamma  \tb{c}+\im\hb_3  \tb{b})\xib, \n\\
 \ed   \tb{a}  &\equiv\im \gamma  \tb{g}\xi + \im \gamma  \tb{f}\xib,  \n\\
 \ed  \tb{g}+\im \tb{g}\rho &\equiv (-\im \gamma \tb{t}-\im h_3  \tb{s})\xi
                                                             +\frac{3\im}{2}\gamma \tb{a}\xib,   \n\\
 \ed  \tb{s} -\im  \tb{s}\rho&\equiv \frac{\im}{2} \gamma  \tb{p}\xi -\im\hb_3 \tb{g}\xib,  \n\\
 \ed   \tb{t}+2 \im \tb{t}\rho&\equiv\im h_3  \tb{p}\xi-\im \gamma \tb{g}\xib, \n\qquad\mod\mci.
\end{align}
Here we applied the substitution, \eqref{eq:etah3},
$$\eta_2\equiv h_3\xi,\quad \etab_2\equiv \hb_3\xib \qquad("\tn{mod}\;\mci").$$
The resulting structure equation should be understood as 
 restricted to a minimal Lagrangian surface.
 
\two
Let us introduce the relevant notations for partial derivatives. 
For a scalar function $u$ on $\mcf$, the notations $\delx u, \delxb u$ would mean
\begin{align}\label{eq:delxnotation}
\delx u =u_{\xi}    &:=\tn{ $\xi$-coefficient of $\ed u$}, \\
\delxb u =u_{\xib}&:=\tn{ $\xib$-coefficient of $\ed u$, \; "\tn{mod}\;$\mci$}".\n
\end{align}
For instance, the above structure equation shows that
$$ \tb{a}_{\xi}=\im\gamma\tb{g}, \quad \tb{a}_{\xib}=\im\gamma\tb{f}.$$

\subsection{Classical Jacobi fields and pseudo-Jacobi fields}\label{sec:classicalJacobi}
From the apparent recursive symmetry of Eq.\eqref{eq:classicalKF}, 
we extract two different kinds of Jacobi fields  for the minimal Lagrangian system;
classical Jacobi fields and classical pseudo-Jacobi fields.
\subb{Classical Jacobi field}
Consider the coefficient $\tb{a}$ in Eq.\eqref{eq:classicalKF}. One finds
\begin{align*}
\tb{a}_{\xi}&=\im\gamma\tb{g}, \\
\tb{a}_{\xi,\xib}&=\im\gamma\tb{g}_{\xib}=-\frac{3}{2}\gamma^2\tb{a}. 
\end{align*}
Motivated by this, we make the following definition.
\begin{defn}\label{defn:classicalJacobifield}
A scalar function $A:X\to\C$ is a \tb{classical Jacobi field} if it satisfies
\be\label{eq:classicalJacobifield}
A_{\xi,\xib}+\frac{3}{2}\gamma^2A=0.  
\ee
The $\C$-vector space of classical Jacobi fields is denoted by $\mathfrak{J}^{(0)}=\mathfrak{J}^0$.
\end{defn}
An examination of Eq.\eqref{eq:KillingEquation0} shows that 
the coefficient $\bba$ generates the classical  Killing field $\tb{X}$ in the sense that 
if the $\tb{a}$-component vanishes then the corresponding classical Killing field $\tb{X}$ vanishes. It follows that the induced map
$$\bba:\g^{\C} \lra \mathfrak{J}^{(0)}$$
is injective. Here $\g^{\C}=\g\otimes\C=\sla(3,\C)$ is the complexification of $\g$.

We claim  that this is in fact an isomorphism, and
the $\tb{a}$-components of the (complexified) classical Killing fields 
generate the classical Jacobi fields.  
\begin{prop}\label{prop:classicalJg}
\be\label{eq:J0=g}
\mathfrak{J}^{(0)}\simeq \g^{\C}.
\ee
\end{prop}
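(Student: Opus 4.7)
The plan is to establish surjectivity of $\bba:\g^{\C}\to\mathfrak{J}^{(0)}$, the injectivity having already been argued from the recursion \eqref{eq:classicalKF}. Given $A\in\mathfrak{J}^{(0)}$, I would build an explicit preimage. Setting $\tb{a}:=A$, the $\xi$- and $\xib$-coefficients of $\ed\tb{a}$ in \eqref{eq:classicalKF} force
\begin{align*}
\tb{g}\,:=\,\tfrac{-\im}{\gamma}\,\delx A,\qquad \tb{f}\,:=\,\tfrac{-\im}{\gamma}\,\delxb A,
\end{align*}
and the compatibility $\delx\tb{f}=\delxb\tb{g}=\tfrac{3\im}{2}\gamma\tb{a}$ in \eqref{eq:classicalKF} reduces precisely to the Jacobi equation $\delx\delxb A+\tfrac{3}{2}\gamma^2A=0$, which holds by hypothesis.

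Next, I would read off the remaining coefficients. The $\xib$-coefficient of $\ed\tb{f}$ and the $\xi$-coefficient of $\ed\tb{g}$ in \eqref{eq:classicalKF} yield algebraic formulas for $\tb{b},\tb{c}$ and $\tb{s},\tb{t}$ as linear combinations of second derivatives of $A$ with coefficients in $h_3,\hb_3,\gamma$; a further differentiation recovers $\tb{p}$. Assembling these into a matrix-valued function $\tb{X}$ on $\mcf$ via the decomposition in Fig.~\ref{fig:Xdecompo0}, one verifies that $\tb{X}$ satisfies the full Killing field equation $\ed\tb{X}+[\psi,\tb{X}]=0$ by checking the remaining directions $\theta_0,\theta_1,\thetab_1,\eta_2,\etab_2$ against the complexified structure equation \eqref{eq:strt2} and the Gauss equation \eqref{eq:Gauss}.

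The main obstacle is this final consistency verification: five coefficients are being synthesized from essentially second-order data in a single scalar $A$, so a priori it is not clear they fit into an honest Killing field. The check, however, reduces to a finite list of identities in derivatives of $A$, each of which collapses on substitution of the Jacobi equation. A conceptually cleaner alternative is a representation-theoretic dimension count: the Jacobi operator is $G$-equivariant, so $\mathfrak{J}^{(0)}$ is a $\g$-module, and a finite-type jet analysis of the Jacobi PDE on $\mcf$ bounds $\dim_{\C}\mathfrak{J}^{(0)}\leq 8=\dim_{\C}\g^{\C}$, which with injectivity closes the argument.
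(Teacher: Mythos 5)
Your overall strategy (injectivity of $\bba$ plus a surjectivity argument, either by explicitly reconstructing $\tb{X}$ from $A$ or by a finite-type dimension bound $\dim_{\C}\mathfrak{J}^{(0)}\leq 8$) is the same as the paper's, which also reduces the claim to ``a standard over-determined PDE analysis.'' But there is a genuine gap: you never use the one fact that makes the analysis over-determined, namely that a \emph{classical} Jacobi field $A$ is by definition a function on $X$, whereas the operators $\delx,\delxb$ of \eqref{eq:delxnotation} are total derivatives involving the prolongation variables $h_3,\hb_3$ (via $\eta_2\equiv h_3\xi$, $\etab_2\equiv\hb_3\xib$). Thus $\delx A=A^{\xi}+A^{2}h_3$ and $\delx\delxb A$ is a polynomial in $h_3,\hb_3$ whose coefficients are covariant derivatives of $A$ on $\mcf$; since $h_3,\hb_3$ are free fiber coordinates, the single scalar equation \eqref{eq:classicalJacobifield} splits into the four conditions $A^{\xi,\xib}+\tfrac{3}{2}\gamma^2A=0$ and $A^{\xi,\ol{2}}=A^{2,\xib}=A^{2,\ol{2}}=0$ (this is \eqref{eq:separateJB}). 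Without this separation your two routes both stall. In the constructive route, $\tb{g}:=\tfrac{-\im}{\gamma}\delx A$ is a priori a function of $h_3$ and hence not a component of a classical Killing field on $\mcf$; and the relations $\delxb\tb{f}=\im\gamma\tb{c}+\im\hb_3\tb{b}$ and $\delx\tb{g}=-\im\gamma\tb{t}-\im h_3\tb{s}$ from \eqref{eq:classicalKF} are each \emph{one} equation in \emph{two} unknowns, so they do not ``yield algebraic formulas for $\tb{b},\tb{c}$ and $\tb{s},\tb{t}$'' until you separate the coefficients of $\hb_3^0$ and $\hb_3^1$ (resp.\ $h_3^0$ and $h_3^1$) — which is legitimate only once you know the relevant second derivatives of $A$ have the required polynomial dependence on the fiber variables, i.e.\ once you have the extra vanishing conditions above.

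The same omission undercuts the dimension count: the naive reading of the Jacobi equation as the single second-order equation $A_{\xi\xib}+\tfrac{3}{2}\gamma^2A=0$ on a surface has an infinite-dimensional solution space (it is an eigenvalue equation for the Laplacian), so no jet analysis of that equation alone can bound $\dim_{\C}\mathfrak{J}^{(0)}$ by $8$. The finite-type property comes precisely from the over-determined system consisting of $A^{\xi,\xib}+\tfrac{3}{2}\gamma^2A=0$ together with $A^{\xi,\ol{2}}=A^{2,\xib}=A^{2,\ol{2}}=0$ and their differential consequences on the $5$-dimensional space $X$; closing that system up (as in the $8\times 8$ Frobenius system \eqref{eq:classicalJacobi}) is what yields $\dim_{\C}\mathfrak{J}^{(0)}=8=\dim_{\C}\g^{\C}$ and identifies $A$ as the $\bba$-component of a complexified classical Killing field. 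Your proposal should be amended to begin with this separation step; once it is in place, either of your two routes can be completed.
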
 
The claim follows by a direct computation.
Rather than a complete proof, 
we give a brief  sketch of ideas.

Let $A$ be a classical Jacobi field, which is defined on $X$. 
Denote the covariant derivatives of $A$ by
$$\ed A=A^{\xi}\xi+A^{\xib}\xib+A^0\theta_0+A^1\theta^1+A^{\ol{1}}\thetab_1
+A^2\eta_2+A^{\ol{2}}\etab_2.$$
We adopt the similar notation for the successive derivatives of $A^{\xi}, A^{\xi, \xib},   \, ...$, etc.

Applying the covariant derivative operators \eqref{eq:delxnotation}, one finds
\begin{align*}
\delx A=A_{\xi}&=A^{\xi}+A^2 h_3,\\
\delxb(A_{\xi})=A_{\xi, \xib}&=A^{\xi,\xib}+A^{\xi,\ol{2}}\hb_3+(A^{2,\xib}+A^{2,\ol{2}}\hb_3) h_3.
\end{align*}
The Jacobi equation \eqref{eq:classicalJacobifield}  becomes
\be\label{eq:separateJB}
\Big(A^{\xi,\xib}+\frac{3}{2}\gamma^2A\Big)
+A^{\xi,\ol{2}}\hb_3+ A^{2,\xib}h_3+A^{2,\ol{2}}\hb_3 h_3=0.\ee

Since $A$ is defined on $X$ while $h_3,\hb_3$ are the prolongation variables,\ftmark\fttext{See \S\ref{sec:prolongation1}.} 
each coefficient of $\{ h_3,\hb_3, h_3\hb_3\}$ in \eqref{eq:separateJB} must vanish separately. As a result,  a classical Jacobi field must satisfy;
$$A^{\xi,\xib}+\frac{3}{2}\gamma^2A=0, \; A^{\xi,\ol{2}}=A^{2,\xib}=A^{2,\ol{2}}=0.$$
From this, a standard over-determined PDE analysis shows that $A$ is the $\bba$-component 
of a (complexified) classical Killing field.  

\subb{Classical pseudo-Jacobi fields}
In an analogy with the classical Jacobi field case, consider next the coefficient $\tb{p}$ in Eq.\eqref{eq:classicalKF}. One finds
\begin{align*}
\tb{p}_{\xi}&=\im\gamma  \tb{b}+2\im h_3 \tb{c} , \\
\tb{p}_{\xi,\xib}&=\im\gamma  \tb{b}_{\xib}+2\im h_3 \tb{c}_{\xib} 
=-\frac{1}{2}(\gamma^2+4 h_3\hb_3)\tb{p}. 
\end{align*}
\begin{defn}\label{defn:classicalpseudoJacobifield}
A scalar function $P:X\to\C$ is a \tb{classical pseudo-Jacobi field} if it satisfies
\be\label{eq:classicalpsJacobifield}
P_{\xi,\xib}+\frac{1}{2}(\gamma^2+4 h_3\hb_3)P=0.  
\ee
The $\C$-vector space of classical pseudo-Jacobi fields is denoted by 
$\mathfrak{J}'^{(0)} =\mathfrak{J}'^{0}$.
\end{defn}
By the similar argument  as above,  the $\tb{p}$-components of 
the (complexified) classical Killing fields generate the classical pseudo-Jacobi fields. 
\begin{prop}\label{prop:classicalpsJ=g}
\be\label{eq:J0ps=g}
\mathfrak{J}'^{(0)}\simeq \g^{\C}.
\ee
\end{prop}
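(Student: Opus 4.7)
The strategy parallels the sketch given for Proposition \ref{prop:classicalJg}, substituting the pseudo-Jacobi operator for the Jacobi operator at every step.

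First, the linear map $\tb{p}:\g^{\C}\to \mathfrak{J}'^{(0)}$ sending a classical Killing field $\tb{X}$ to its $\tb{p}$-coefficient is well defined, since the computation preceding Definition \ref{defn:classicalpseudoJacobifield} shows precisely that $\tb{p}$ satisfies \eqref{eq:classicalpsJacobifield}. For injectivity, assume $\tb{p}\equiv 0$. The first equation of \eqref{eq:classicalKF} then forces $\im\gamma\tb{b}+2\im h_3\tb{c}=0$ and $\im\gamma\tb{s}+2\im\hb_3\tb{t}=0$; since $\tb{b},\tb{c},\tb{s},\tb{t}$ are $\SO(2)$-equivariant functions on $\mcf$ and do not depend on the prolongation variables $h_3,\hb_3$, we separate to get $\tb{b}=\tb{c}=\tb{s}=\tb{t}=0$. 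Cascading through the $\tb{b},\tb{s}$ equations then yields $\tb{f}=\tb{g}=0$, and the $\tb{a}$-equation $\ed\tb{a}=\im\gamma \tb{g}\xi+\im\gamma \tb{f}\xib$ now reads $\ed\tb{a}=0$; combining this with the diagonal entry constraints from the $\tb{f}$- and $\tb{g}$-equations modulo $\mci$ (which involve $\tb{a}$) forces $\tb{a}=0$, so $\tb{X}=0$.

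For surjectivity, let $P:X\to\C$ be a classical pseudo-Jacobi field. Using the expansion \eqref{eq:notation0} and the fact that $h_3$ is holomorphic along an integral surface (so that $\delxb h_3=0$), we compute
\begin{align*}
\delx P &= P^{\xi}+P^{2}h_3,\\
\delxb\delx P &= P^{\xi,\xib}+P^{\xi,\bar{2}}\hb_3+\bigl(P^{2,\xib}+P^{2,\bar{2}}\hb_3\bigr)h_3.
\end{align*}
Substituting into \eqref{eq:classicalpsJacobifield} gives
\[
\Bigl(P^{\xi,\xib}+\tfrac{1}{2}\gamma^2 P\Bigr)+P^{\xi,\bar{2}}\hb_3+P^{2,\xib}h_3+\bigl(P^{2,\bar{2}}+2P\bigr)h_3\hb_3=0.
\]
Since $P$ is defined on $X$ while $h_3,\hb_3$ are independent prolongation variables, each monomial in $\{1,h_3,\hb_3,h_3\hb_3\}$ must vanish separately, yielding the overdetermined system
\[
P^{\xi,\xib}+\tfrac{1}{2}\gamma^2 P=0,\quad P^{\xi,\bar{2}}=0,\quad P^{2,\xib}=0,\quad P^{2,\bar{2}}+2P=0.
\]
One then proceeds by a standard overdetermined PDE analysis, identical in spirit to the Jacobi case: successively differentiating and comparing coefficients along $\theta_0,\theta_1,\thetab_1$ produces further vanishing relations on the derivatives $P^{0},P^{1},P^{\bar{1}}$ and their iterates, until $\ed P$ and all its higher covariant derivatives are expressible linearly in a closed set of seven auxiliary coefficients whose structure equation is exactly that of $\{\tb{b},\tb{c},\tb{f},\tb{a},\tb{g},\tb{s},\tb{t}\}$ in \eqref{eq:classicalKF}. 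Assembling $P$ with these as the components of a $\g^{\C}$-valued function on $\mcf$ recovers a classical Killing field with $\tb{p}$-component equal to $P$.

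The main obstacle is the bookkeeping of the overdetermined system: one must verify that the closure of the derived equations under further differentiation is exactly $8$-dimensional and matches \eqref{eq:classicalKF}, so that the resulting coefficient matrix genuinely lies in $\g^{\C}$ rather than in a larger Lie algebra. This is a finite check, and once completed it simultaneously proves surjectivity and matches the dimension with injectivity, establishing the isomorphism $\mathfrak{J}'^{(0)}\simeq \g^{\C}$.
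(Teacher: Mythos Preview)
Your proof is correct and follows essentially the same approach as the paper, which for this proposition offers only the single sentence ``by the similar argument as above.'' You have in fact filled in more detail than the paper does: the explicit injectivity cascade for $\tb{p}=0$, and the separated system coming from the pseudo-Jacobi equation, including the key modification $P^{2,\bar{2}}+2P=0$ in the $h_3\hb_3$-coefficient (versus $A^{2,\bar{2}}=0$ in the Jacobi case), are exactly what the paper is tacitly invoking.
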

As a consequence of the isomorphisms \eqref{eq:J0=g}, \eqref{eq:J0ps=g}, we have
\begin{cor}\label{cor:J0J0ps=g}
$$\mathfrak{J}^{(0)}\simeq\mathfrak{J}'^{(0)}\simeq \g^{\C}.$$
\end{cor}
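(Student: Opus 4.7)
The plan is to deduce the corollary immediately by transitivity from the two preceding propositions: Proposition \ref{prop:classicalJg} gives $\mathfrak{J}^{(0)}\simeq \g^{\C}$ via the $\bba$-component map, and Proposition \ref{prop:classicalpsJ=g} gives $\mathfrak{J}'^{(0)}\simeq \g^{\C}$ via the $\bbp$-component map. Composing one isomorphism with the inverse of the other produces the asserted chain. No further PDE analysis is required at this stage.

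To make the isomorphism canonical, I would observe that both maps arise from the same underlying correspondence between $\g^{\C}$ and the complexified classical Killing fields coming from Proposition \ref{prop:classicalsymmetrytocv}: given $\tb{X}\in\g^{\C}$, the single classical Killing field it determines has both an $\bba$-coefficient lying in $\mathfrak{J}^{(0)}$ and a $\bbp$-coefficient lying in $\mathfrak{J}'^{(0)}$. The induced natural isomorphism $\mathfrak{J}^{(0)}\simeq \mathfrak{J}'^{(0)}$ can then be described explicitly by the assignment $\bba\mapsto \bbp$, where the correspondence is mediated by the intermediate coefficients $\bbb,\bbc,\bbf,\bbg,\bbs,\bbt$ through the recursive structure equation \eqref{eq:classicalKF}.

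There is no genuine obstacle here, but it is worth flagging the conceptual point that the two defining second-order operators in \eqref{eq:classicalJacobifield} and \eqref{eq:classicalpsJacobifield} have visibly different zeroth-order terms (the pseudo-Jacobi equation picks up the squared norm $h_3\hb_3$ of the Hopf differential while the Jacobi equation does not), yet their eight-dimensional solution spaces parametrize the same underlying classical Killing fields. This matching foreshadows the higher-order picture developed in \S\ref{sec:formalKilling}, where the two equations no longer share the same solution space and instead the Jacobi and pseudo-Jacobi fields organize themselves into two intertwined infinite sequences coupled through the pair of 3-step recursions summarized in Fig.\ref{fig:diagram0}.
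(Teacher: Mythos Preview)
Your proposal is correct and matches the paper's approach exactly: the corollary is stated there as an immediate consequence of the two preceding isomorphisms \eqref{eq:J0=g} and \eqref{eq:J0ps=g}, with no further argument given. Your additional remarks on canonicity and the contrast between the two operators are accurate and anticipate the later development, but they go beyond what the paper records at this point.
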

\subb{Recursion relations}\label{sec:recursionrel}
The structure equation \eqref{eq:classicalKF}
contains two 3-step recursion relations between
the classical pseudo-Jacobi field $\bbp$ and the classical Jacobi field $\bba$.
We record and emphasize these structures here,
for they are the main technical ingredients 
in the construction of the formal Killing fields later on.

\one
[\tb{From $\bbp$ to $\bba$}]

Let the classical pseudo-Jacobi field $\bbp$ be given. 
From \eqref{eq:classicalKF}, suppose (either of) the equations
$$\delxb\bbb=\frac{\im}{2}\gamma\bbp,\quad \delxb\bbc=\im \hb_3\bbp$$
were solved.  
Differentiating $\bbb, \bbc$,  
$$\delx\bbb=\im h_3\bbf,\quad\delx\bbc=\im\gamma\bbf,$$
and one gets $\bbf$.
Differentiating $\bbf$,  
$$\delx\bbf=\frac{3\im}{2}\gamma\bba,$$
and one gets the classical Jacobi field $\bba$.

The process can be summarized by the following diagram.
$$\bbp  \xrightarrow{\;\;\;\;\delxb^{-1}\;\;\;\;}  \bbb, \bbc 
 \xrightarrow{\quad\delx\quad} \bbf \xrightarrow{\quad\delx\quad} \bba.
$$
 
\two
[\tb{From $\bba$ to $\bbp$}]

In a similar way, starting from the  classical Jacobi field $\bba$,
one may reach $\bbp$ by the following process.
$$\bba \xrightarrow{\quad\delx\quad} \bbg \xrightarrow{\;\;\;\;\delxb^{-1}\;\;\;\;} 
\bbs, \bbt \xrightarrow{\quad\delx\quad} \bbp.
$$

\two
Note that by combining the two processes, 
one gets a 6-step recursion relation for classical (pseudo) Jacobi fields.

\subsection{Classical conservation laws}\label{sec:classicalcvlaw}
Another important invariants of the minimal Lagrangian system
are conservation laws.
They also can be read off Eq.\eqref{eq:classicalKF}.
\subb{Definition}
Let $(\Omega^*(X), \ed)$ be the de-Rham complex of $\C$-valued differential forms on $X$.
Since $\mci$ is a differential ideal closed under the exterior derivative,
the quotient complex
\[(\underline{\Omega}^*,\, \underline{\ed}) \]
is well defined, where  $\underline{\Omega}^*=\Omega^*(X)/\mci$,  
and $\underline{\ed}=\ed\mod \mci$.
Let $H^{q}(\underline{\Omega}^*,\, \underline{\ed})$ be the cohomology   
at $\underline{\Omega}^q.$  The set 
$$\{ \;H^{q}(\underline{\Omega}^*,\, \underline{\ed}) \;\}_{q=0}^2$$
 is called the \emph{characteristic cohomology} of the differential system $(X,\mci)$.
\begin{defn}\label{defn:classicalcvlaw}
Let $(X,\mci)$ be the differential system for minimal Lagrangian surfaces.
A \tb{classical conservation law}  is  an element of the 1-st characteristic cohomology $H^1(\underline{\Omega}^*,\, \underline{\ed})$ of $(X,\mci)$. The $\C$-vector space of classical conservation laws is denoted by
\[ \mcc^{(0)}= \mcc^{0}:=H^1(\underline{\Omega}^*,\, \underline{\ed}).\]
Let $\mcc^{(0)}_{loc}$ denote  the space of local classical conservation laws of $\mci$
restricted to a small contractible open subset of $X$.
\end{defn}

\subb{Classical conservation laws from classical Killing fields}
From the structure equation \eqref{eq:classicalKF}, consider the 1-form
\be\label{eq:cvlawphi_a}
\varphi_{\tb{a}}=\tb{b}\xi+\tb{s}\xib.
\ee
One finds that
$$\ed \varphi_{\tb{a}}\equiv 0\mod\mci,$$
and the 1-form $\varphi_{\tb{a}}$ represents a classical conservation law. 

Let $[\varphi_{\tb{a}}]\in\mcc^{(0)}_{loc}$ denote the class represented by $\varphi_{\tb{a}}$
(which is globally defined on $X$).
We claim that the associated map
$$\g^{\C} \simeq \mfj^{(0)} \lra \mcc^{(0)}_{loc}$$ given by
$$\tb{a} \lra [\varphi_{\tb{a}}]$$
is an isomorphism. 

In order to verify this claim, and for other computational purposes, we introduce a differentiated version of conservation laws.

\sub{Classical differentiated conservation laws}\label{sec:cdcvlaw}
From the exact sequence
$$0\to  \mci  \to \Omega^*(X)\to \Omega^*(X)/\mci=\underline{\Omega}^*\to 0,$$
at least locally we have
$$\mcc^{(0)}_{loc}\simeq H^2(\mci, \ed).$$
Here $H^*(\mci,\ed)$ denotes the cohomology of the complex $(\Omega^*(\mci), \ed).$
\begin{defn}\label{defn:classicaldcvlaw}
Let $(X,\mci)$ be the differential system for minimal Lagrangian surfaces.
A \tb{classical differentiated conservation law} is an element in the $2$-nd  cohomology 
$H^2(\mci, \ed).$
The $\C$-vector space of classical differentiated conservation laws is denoted by
$$  \mch^{(0)}=\mch^0:= H^2(\mci, \ed).$$
\end{defn}
Note from the definition of classical conservation laws that a representative 1-form $\varphi$ of a class $[\varphi]\in\mcc^{(0)}_{loc}$ is defined up to exact 1-forms. By taking the exterior derivative ``$\ed$" and considering the differentiated conservation laws, we eliminate this ambiguity, which is practically  important in the actual computation of  conservation laws. 

Moreover, in a geometric situation as in the present case, the ideal $\mci$ is often equipped with the relevant structures which enable one to determine a subspace of $\Omega^2(\mci)$ in which a differentiated conservation law has the unique representative. In other words, one of the advantages of the differentiated version of conservation laws is that 
it may lead to an analogue of Hodge theorem;
the appropriately normalized differentiated conservation laws can be considered as the \emph{harmonic} representatives.
\begin{rem}
Consider the symplectic form of $M$, \eqref{1varpi}:
\begin{align*}
\varpi&= \frac{\im}{2}  \zeta^A\w \zetab^A \\
&=\omega^1\w\mu^1+\omega^2\w\mu^2 = \ed\theta_0.
\end{align*}
Thus the class $[\varpi]$ is trivial in $H^{2}(\mci,\,\ed).$
\end{rem}
We shall  first establish the isomorphism
$$\mch^{(0)}\simeq\g^{\C} $$
by a direct differential analysis. Then, by finding a section (up to constant scale) of the natural map
\be\label{eq:varphiPhimap}
[\varphi_{\tb{a}}]\in\mcc^{(0)}_{loc} \lra  [\ed \varphi_{\tb{a}}]\in\mch^{(0)},\ee
we show that there exists an   isomorphism
$$\mcc^{(0)}_{loc}\simeq \mch^{(0)}.$$
\subb{Initial reduction}
Let $\Phi\in  \Omega^2(\mci)$ be a 2-form in the ideal.  Up to addition by $\ed( \Omega^1(\mci))$, an exact 2-form in the ideal of the form $\ed(f_0\theta_0+f_1\theta_1+f_{\bar{1}}\thetab_1)$ for scalar coefficients 
$f_0, f_1, f_{\bar{1}}$, a computation shows that one may write
\be\label{eq:H0}
\Phi=A \Psi + \theta_0\w\sigma,
\ee
where $A$ is a scalar function, 
$\sigma\in\Omega^1(X)$, and
$$\Psi=\tn{Im}(\theta_1\w\xi)=-\frac{\im}{2}(\theta_1\w\xi-\thetab_1\w\xib).
$$
Note
\begin{align}\label{1dPsi}
\ed\Psi&= 3\im\gamma^2\theta_0\w(\xi\w\xib+\theta_1\w\thetab_1)     \\
&\equiv 0\mod\theta_0.     \n
\end{align}

A 2-form $\Phi\in\Omega^2(\mci)$ normalized as in \eqref{eq:H0} is called 
a \emph{reduced 2-form}.
Let $$H^{(0)}\subset  \Omega^2(\mci)$$ denote the subspace of  such reduced 2-forms.
By construction, $H^{(0)}$ is transversal to the subspace of exact 2-forms
$\ed( \Omega^1(\mci))\subset  \Omega^2(\mci)$. It follows that  we have an isomorphism
\be
\mch^{(0)} \simeq \{ \,\tn{closed 2-forms in }\, H^{(0)} \, \}.\n
\ee

\subb{Structure equation}
The equation for the classical differentiated conservation laws is now reduced to
$$\ed\Phi=0,$$
for $\Phi\in H^{(0)}$.
From this, a direct differential analysis yields the following closed structure equation.\ftmark\fttext{This part of the analysis is more or less mechanical, and shall be omitted.}
\be\label{eq:classicalJacobi}
\ed \begin{pmatrix} A\\ A^{\xi}\\A^{\xib} \\ A^1\\A^{\bar{1}}\\A^{1,1}\\A^{\bar{1}, \bar{1}}\\P \end{pmatrix} =
\im \begin{pmatrix} \cdot \\-A^{\xi}\\A^{\xib}\\A^1\\-A^{\bar{1}} \\ 2A^{1,1}\\-2A^{\bar{1}, \bar{1}}\\   \cdot \end{pmatrix}\rho
+\begin{pmatrix}
A^{\xi}& A^{\xib}&\cdot&  A^1& A^{\bar{1}}&\cdot&\cdot\\
A^{\bar{1}, \bar{1}}&-\frac{3}{2}\gamma^2A & -3\gamma^2 A^{\bar{1}} & P& \cdot&
A^{1}&\cdot \\
-\frac{3}{2}\gamma^2A & A^{1,1}& -3\gamma^2 A^1 &\cdot   & -P &
\cdot  & A^{\bar{1}}\\
P&\cdot&  3\gamma^2 A^{\xib} &A^{{1,1}}&-\frac{3}{2}\gamma^2A&
 \cdot &-A^{\xi} \\
\cdot& -P &  3\gamma^2 A^{\xi} &-\frac{3}{2}\gamma^2A& A^{{\bar{1},\bar{1}}}&
 -A^{\xib} &\cdot  \\
 -\gamma^2 A^{\xib}& \cdot&\cdot&\cdot&-\gamma^2 A^1 &
\cdot &-2P \\
  \cdot&-\gamma^2 A^{\xi}&\cdot&-\gamma^2 A^{\bar{1}} &\cdot&
2P & \cdot   \\
\frac{\gamma^2}{2}A^{\bar{1}} &-\frac{\gamma^2}{2}A^{1}&\cdot
&\frac{\gamma^2}{2}A^{\xib}&-\frac{\gamma^2}{2}A^{\xi} &
A^{1,1} & -A^{\bar{1},\bar{1}}  \end{pmatrix}
\begin{pmatrix} \xi \\ \xib \\ \theta_0\\\theta_1\\ \thetab_1 \\ \eta_2 \\ \etab_2 \end{pmatrix}.
\ee
\noi Here the upper-script $A^{*,*}$ denotes the covariant derivative as before.

One finds that the closed linear differential system for  
$\{A,A^{\xi},A^{\xib},A^1,A^{\bar{1}}, A^{1,1}, A^{\bar{1},\bar{1}}, P\}$ is compatible, i.e.,
$\ed^2=0$ is a formal identity.
By the existence and uniqueness theorem of ODE,  the space of (local) classical differentiated   conservation laws is eight dimensional.

Note that,  at this stage, the reduced 2-form $\Phi$ is normalized to
\be\label{eq:H0AA}
\Phi=A \Psi +\im \theta_0\w\left(A^{\xi}\xi-A^{\xib}\xib+A^{1}\theta_1-A^{\bar{1}}\thetab_1\right).  
\ee

\sub{Noether's theorem}
Observe that the coefficients of $\Phi$ are determined by the coefficient $A$ (or $P$) and its successive derivatives. Moreover, Eq.\eqref{eq:classicalJacobi} shows that
$$\begin{cases}
&\tn{$A$ is a classical Jacobi field},\\
&\tn{$P$ is a classical pseudo-Jacobi field.}
\end{cases}$$
It follows from this  that we have the isomorphisms
$$ \mch^{(0)}\simeq
\mathfrak{J}^{(0)}\simeq \mathfrak{J}'^{(0)}\simeq \g^{\C}.$$
In particular, this implies that 
the classical differentiated conservation laws are globally defined on $X$.
\begin{cor}[\tb{Noether's theorem} for classical conservation laws]\label{cor:Noether} 
Let $M$ be the simply connected $2_{\C}$-dimensional complex space form of constant holomorphic sectional curvature $4\gamma^2.$
Let $G=\SU(3),\,\tn{or}\; \SU(1,2)$ be the group of K\"ahler isometries of  $M$.
Let $\g$ be its Lie algebra.
Then there exist  the  isomorphisms
$$ (\mathfrak{S}^{(0)})^{\C}\simeq \mathfrak{J}^{(0)}\simeq \mathfrak{J}'^{(0)}\simeq\mch^{(0)}\simeq\g^{\C}\simeq\mcc^{(0)}_{loc}.$$
Here $\mcc^{(0)}_{loc}$ denotes the space of local  conservation laws of $\mci$
restricted to a small contractible open subset of $X$.
\end{cor}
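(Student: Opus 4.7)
Most of the isomorphisms in the chain have already been secured in the preceding discussion. Specifically, $(\mathfrak{S}^{(0)})^{\C}\simeq\g^{\C}$ follows from Prop.\ref{prop:classicalsymmetrytocv} by complexification, while $\mathfrak{J}^{(0)}\simeq \mathfrak{J}'^{(0)}\simeq\g^{\C}$ is the content of Prop.\ref{prop:classicalJg}, Prop.\ref{prop:classicalpsJ=g} and Cor.\ref{cor:J0J0ps=g}. Only the remaining pair $\mch^{(0)}\simeq\g^{\C}$ and $\mcc^{(0)}_{loc}\simeq\mch^{(0)}$ needs to be established, and the plan is to treat them in that order.

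For $\mch^{(0)}\simeq\g^{\C}$, I would invoke the reduction $\mch^{(0)}\simeq\{\text{closed reduced 2-forms in }H^{(0)}\}$ together with the closed linear first-order structure equation \eqref{eq:classicalJacobi}. The system involves precisely the eight unknowns $\{A,A^{\xi},A^{\xib},A^{1},A^{\bar{1}},A^{1,1},A^{\bar{1},\bar{1}},P\}$ and, being formally compatible ($\ed^2=0$), admits by Frobenius an eight-dimensional local solution space, matching $\dim_{\C}\g^{\C}=8$. The first row of \eqref{eq:classicalJacobi} shows that $A$ is a classical Jacobi field, and the normalization \eqref{eq:H0AA} shows that $\Phi$ is recovered from $A$ and its derivatives, so $\Phi\mapsto A$ injects $\mch^{(0)}$ into $\mathfrak{J}^{(0)}$; since both sides have dimension $8$, the injection is an isomorphism. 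The analogous map $\Phi\mapsto P$ would yield the parallel isomorphism $\mch^{(0)}\simeq\mathfrak{J}'^{(0)}$.

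For $\mcc^{(0)}_{loc}\simeq\mch^{(0)}$, the short exact sequence $0\to\mci\to\Omega^*(X)\to\underline{\Omega}^*\to 0$ produces a long exact sequence whose boundary is the natural map $[\varphi]\mapsto[\ed\varphi]$ of \eqref{eq:varphiPhimap}; on a contractible chart where the ambient de Rham cohomology vanishes, this gives an injection $\mcc^{(0)}_{loc}\hookrightarrow\mch^{(0)}$. For surjectivity I would construct an explicit section: given $\tb{X}\in\g^{\C}$ with coefficients as in Fig.\ref{fig:Xdecompo0}, the 1-form $\varphi_{\bba}=\bbb\xi+\bbs\xib$ of \eqref{eq:cvlawphi_a} is globally defined on $X$ and is $\underline{\ed}$-closed by \eqref{eq:classicalKF}, hence represents a class $[\varphi_{\bba}]\in\mcc^{(0)}_{loc}$. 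The composite
$$\g^{\C}\longrightarrow\mcc^{(0)}_{loc}\longrightarrow\mch^{(0)}\longrightarrow\mathfrak{J}^{(0)}\simeq\g^{\C}, \qquad \tb{X}\longmapsto[\varphi_{\bba}]\longmapsto[\ed\varphi_{\bba}]\longmapsto A,$$
then sends $\tb{X}$ to a nonzero constant multiple of its $\bba$-coefficient, which by Prop.\ref{prop:classicalJg} is an isomorphism onto $\mathfrak{J}^{(0)}$. Diagram chase forces both intermediate arrows to be isomorphisms.

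The main obstacle I anticipate is the routine but painstaking verification that the system \eqref{eq:classicalJacobi} really is formally closed under $\ed$, which entails cross-checking every entry against the structure equations \eqref{eq:strt2} together with the Jacobi and pseudo-Jacobi identities \eqref{eq:classicalJacobifield}, \eqref{eq:classicalpsJacobifield}; this is the mechanical step deferred in the excerpt. A secondary subtlety is to confirm that $\varphi_{\bba}$ and the reduced 2-form $\Phi$ descend from $\mcf$ to $X$, i.e.\ are $\SO(2)$-invariant. The $\rho$-coefficients in \eqref{eq:classicalKF} assign weights $-1,+1$ to $\bbb,\bbs$ which cancel against the weights $+1,-1$ of $\xi,\xib$, so $\varphi_{\bba}$ has weight zero, and an identical check on $\theta_0,\theta_1,\thetab_1$ and the derivatives appearing in \eqref{eq:H0AA} confirms the descent of $\Phi$.
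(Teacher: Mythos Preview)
Your overall strategy matches the paper's: most isomorphisms are already in place, and only $\mch^{(0)}\simeq\g^{\C}$ and $\mcc^{(0)}_{loc}\simeq\mch^{(0)}$ remain. Your treatment of the first of these is exactly the paper's argument.

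For the last isomorphism, however, the paper takes a more direct route. Rather than using $\varphi_{\bba}=\bbb\xi+\bbs\xib$ and a diagram chase, it exhibits a different explicit primitive
\[
\varphi_A=\frac{\im}{6\gamma^2}\left( A^{\xi}\thetab_1-A^{\xib}\theta_1+A^{\bar{1}}\xi-A^1\xib  \right)
\]
and checks $\ed\varphi_A=\Phi$ on the nose, with $\Phi$ already in the normalized form \eqref{eq:H0AA}. This makes the section $\mch^{(0)}\to\mcc^{(0)}_{loc}$ explicit and the composite $\mch^{(0)}\to\mcc^{(0)}_{loc}\to\mch^{(0)}$ manifestly the identity, so no further verification is needed.

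Your approach via $\varphi_{\bba}$ is also valid, but the step you flag only implicitly---that the composite $\g^{\C}\to\mfj^{(0)}$ sends $\tb{X}$ to a nonzero multiple of $\bba$---is not free: the structure equations \eqref{eq:classicalKF} are stated only $\!\!\mod\mci$, so computing $[\ed\varphi_{\bba}]\in\mch^{(0)}$ and reducing it to the form \eqref{eq:H0AA} requires the full Killing field equation \eqref{eq:KillingEquation0}, not just its restriction. One clean way to close this gap is a Schur argument: the composite is a $G$-module endomorphism of the simple module $\g^{\C}$, hence a scalar, and a single example shows it is nonzero. The paper's choice of $\varphi_A$ sidesteps this entirely.
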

\begin{proof}
For a classical Jacobi field $A$, set
$$\varphi_A=\frac{\im}{6\gamma^2}\left( A^{\xi}\thetab_1-A^{\xib}\theta_1+A^{\bar{1}}\xi-A^1\xib  \right).
$$
Then $ \ed\varphi_A=\Phi$, where $\Phi$ is given by \eqref{eq:H0AA}.
This establishes the  isomorphism $\mch^{(0)}\simeq\mcc^{(0)}_{loc}.$
\end{proof}
For the global consideration of conservation laws including the contribution from 
the cohomology
of the ambient space $X$, we refer to  \cite[p.584, Theorem 1]{Bryant1995}.
\begin{rem}[Moment conditions]
Consider a closed integral curve $\Gamma\subset X$ of $\mci$.
Such $\Gamma$  corresponds to a pair $(\gamma, \Pi)$ of a closed curve $\gamma\subset M$ 
equipped with a field of oriented Lagrangian 2-planes $\Pi$ tangent to $\gamma$
such that the associated $(2,0)$-vector field is covariant constant. 
The integrals of the classical conservation laws on $\Gamma$ 
yield the moment conditions for $(\gamma, \Pi)$ 
to bound a minimal Lagrangian surface, 
with the given boundary $\gamma$ and the prescribed  tangent planes $\Pi$ along $\gamma$.
Compare \cite{Fu1995}.
\end{rem}

\subsection{Higher-order extension}
In the remainder of the paper,  the analysis of the classical objects in this section will be extended to 
their higher-order analogues in the setting of
 the  infinite prolongation of the differential system $(X,\mci)$.

\two
 The extension process relies on the familiar extension of the  Maurer-Cartan form 
$$\psi \lra \psi_{\lambda}=\lambda \psi_{+}+\psi_0+\lambda^{-1}\psi_{-},$$
obtained by inserting the spectral parameter $\lambda$. 
This leads to the generalization of the $\g$-valued classical Killing field $\tb{X}$ 
to the loop algebra $\g^{\C}[[\lambda]]$-valued formal Killing field $\tb{X}_{\lambda},$
$$\tb{X}\lra \tb{X}_{\lambda}.$$
In practice, this amounts to \emph{expanding} the scalar coefficients $\{ \tb{p}, \tb{b}, \tb{c}, \tb{f}, \tb{a}, \tb{g}, \tb{s}, \tb{t}  \}$ of $\tb{X}$ to the appropriate formal series in $\lambda$.

We will find that this extension is also adapted so that  
the structure equation for the formal Killing field contains
the higher-order version of the recursion relations discussed in  
\S\ref{sec:recursionrel}.
We will be able to read off the higher-order (pseudo) Jacobi fields and conservation laws 
from the  structure equation for  $\tb{X}_{\lambda}$.

\two
As a first step to the higher-order analysis, we begin by introducing 
the infinite prolongation of the differential system $(X,\mci)$,
and record the basic structure equations.

\section{Prolongation}\label{sec:prolongation1}
The minimal Lagrangian surfaces under consideration 
 are locally described by the elliptic Tzitzeica equation,
which is a well known example of an  integrable elliptic equation.
It possesses an infinite sequence of higher-order symmetries 
and conservation laws, \cite{Fox2012}.
In order to access the corresponding structures for the  minimal Lagrangian system,
it becomes necessary to introduce the infinite prolongation.   
 
\two
In \S\ref{sec:prolonginf}, 
we determine  the basic structure equations for the infinite prolongation.
The commutation relations for the dual frame of vector fields will be used in the next sections
for the classification of Jacobi fields.
In \S\ref{sec:doublecover}, 
we define the   triple cover of the  infinite prolongation
to support the triple cover of a minimal Lagrangian surface defined by Hopf differential.
In \S\ref{sec:balanced}, 
we introduce a sequence of adapted functions called \emph{balanced coordinates}
and record their basic structure equations. 

\two
In hindsight, the hidden symmetries of the minimal Lagrangian system begin to emerge  
in the form of a weighted homogeneity with respect to the balanced coordinates.
To be precise, this is a non-local symmetry associated with 
an integrable extension.
We do not pursue to formulate this properly in this paper, 
and the notion of weighted homogeneity 
would suffice for our purposes.  
We mention that
this is a part of an infinite hierarchy of non-local symmetries called spectral symmetry.

\subsection{Infinite prolongation}\label{sec:prolonginf}

\subb{Infinite sequence of $\PP^1$-bundles}\label{sec:infP1}
Set $(X^{(0)},\,\I{0})=(X,\,\mci)$ be the original differential system \eqref{1ideal2}.
Inductively define  $(\X{k+1},\,\I{k+1})$ as the prolongation of $(\X{k},\,\I{k})$ such that
$$\pi_{k+1, k}: \X{k+1}\to \X{k}$$
is the bundle of oriented integral 2-planes of $(\X{k},\,\I{k})$, and  that
the differential ideal $\I{k+1}$ is generated by $\pi_{k+1,k}^*\I{k}$ 
and the restriction of the canonical contact ideal to $\X{k+1}\subset\Gr^+(2,T\X{k})$.

For each $k\geq 0$, the prolongation space $\X{k+1}$ is a smooth manifold. 
The projection $\pi_{k+1,k}$ is a smooth submersion with two dimensional fibers isomorphic to $\C\mathbb{P}^1$, see \S\ref{sec:doublecover} for further details.
\begin{defn}
The \tb{infinite prolongation} $(\xinf,\I{\infty})$ of the differential system $(X,\mci)$ for minimal Lagrangian surfaces is defined as the projective limit
\begin{align}
 \xinf &=\lim_{\longleftarrow}  \X{k}, \n \\
\I{\infty}&=\cup_{k\geq 0} \I{k}. \n
\end{align}
Let $\pi_{\infty,k}:\xinf\to\X{k}$ be the associated projection.
Here we identity $\I{k}$ with its image $\pi_{\infty,k}^*\I{k}\subset\Omega^*(\xinf).$
By construction, the sequence of Pfaffian systems $\I{k}$ satisfy the inductive closure conditions 
$$\ed\I{k}\equiv 0\mod \I{k+1}, \; k\geq 1.$$
\end{defn}

\subb{Associated principal $\SO(2)$-bundles}\label{eq:associatedbundle}
For the purpose of computation, we also introduce the following associated bundles.

Consider  the principal $\SO(2)$-bundle  $\Pi:\mcf\to X=\mcf/\SO(2)$. 
Set $\F{k}=\Pi^*\X{k}$ for $k\geq 1$, and define
\[\F{\infty}= \lim_{\longleftarrow} \,\F{k}.\]
The higher-order differential analysis  will be practically carried out on 
the principal $\SO(2)$-bundle
$$ \F{\infty}\to\X{\infty}$$
in an $\SO(2)$-equivariant manner so that it has a well defined meaning on $\X{\infty}$.
For simplicity, we continue to use $\I{k}, \iinf$ to denote 
the corresponding differential  ideals on $\F{k}, \F{\infty}.$

\subb{Zariski open sets}\label{sec:opensets}
Let $\X{1}_0\subset\X{1}$ be the open subset defined by the independence condition
\be\label{eq:X10}
\X{1}_0=\{ \;\, \tn{E}\in \X{1} \;\vert\; \; (\xi\w\xib)_{\vert_{\tn{E}}}\ne 0\,\}.
\ee
For $k\geq 1$, inductively define the corresponding sequence of open subsets 
$$\X{k+1}_0=\pi_{k+1,k}^{-1}(\X{k}_0).$$ 
Let $$\lim_{\longleftarrow} \X{k}_0=\xinf_0\subset\xinf$$ 
denote their projective limit.
Let $\F{k}_0\subset\F{k}, \F{\infty}_0\subset\F{\infty}$
denote the associated open subsets.

Dually, consider the complementary open subset
\be\label{eq:X1infty}
\X{1}_{\infty}=\{ \;\, \tn{E}\in \X{1} \;\vert\; \;  (\eta_2\w\etab_2)_{\vert_{\tn{E}}}\ne 0\,\}.
\ee
Its associated open subsets 
$\X{k}_{\infty}, \xinf_{\infty}, \F{k}_{\infty},  \F{\infty}_{\infty}$ are defined similarly as above.
Note that the pair  $\{  \xinf_{0},  \xinf_{\infty}\}$ form an open covering of $\xinf$ 
(and similarly for $\{  \F{\infty}_{0},  \F{\infty}_{\infty}\}$ and $\F{\infty}$).

\two
The analysis of the infinitely prolonged differential system for minimal Lagrangian surfaces 
will be carried out on $\F{\infty}_0$, which is one half of the open covering of $\F{\infty}.$ 
But we remark that it can be equally well carried out on the other half of the open covering $\F{\infty}_{\infty}$.
Since the formulation of the structure equation on $\F{\infty}_{\infty}$ is almost identical to that on $\F{\infty}_0$, we shall present only the $\F{\infty}_{0}$-part of the analysis and omit the $\F{\infty}_{\infty}$-part.

Let us give instead an indication on how the dual prolongation process on $\F{1}_{\infty}$ would proceed.
Given the 2-form  $\eta_2\w\xi$  in the ideal $\mci$, 
recall the prolongation variable $h_3$ defined by \eqref{eq:etah3}.
Introduce the new prolongation variable $p_3$ such that 
$$\xi=p_3\eta_2$$
on the integral 2-planes in $\F{1}_{\infty}$ (and hence $p_3=h_3^{-1}$ on $\F{1}_{0}\cap\F{1}_{\infty}$).
By switching from $h_3$ to $p_3$, it is clear that one may proceed analogously to the infinite prolongation on $\F{\infty}_{\infty}$, such that the associated formulas agree on the intersection 
$\F{\infty}_{0}\cap\F{\infty}_{\infty}.$
  
\subb{Infinitely prolonged structure equation}\label{sec:infstrt}
Recall the induced structure equation \eqref{eq:strtx} on an immersed 
minimal Lagrangian surface.
By the standard prolongation process of the exterior differential system 
theory,  
\cite[Chapter \rm{VI}]{Bryant1991}, 
set
$$\eta_2=h_3\xi$$
for a prolongation variable $h_3.$  From the third equation of \eqref
{eq:strtx}, we get
$$\ed h_3+3\im h_3\rho\equiv 0\mod\xi.$$

Inductively define the higher-order derivatives of   $h_3$   by the 
equations
\be\label{eq:dhj}
\ed h_j +  \im   j    h_j   \rho = h_{j+1}  \xi + T_{j}  \xib,
\; \; j = 3, \, 4, \, ... \, ,
\ee
where
\begin{align}\label{eq:Tj}
T_{3}&=0, \,     \\
T_{j+1}&=   \sum_{s=0}^{j-3}
a_{j,s} \,  h_{j-s} \, \partial^{s}_{\xi} R,    \; \; \; \mbox{for} 
\; \; j\geq 3,  \n   \\
& \quad  \;\;a_{j,s} =\frac{(j+2s+3) }{2(j-1)} {j-1 \choose s+2}, 
\n\\
& \quad  \partial^{s}_{\xi}  R=\delta_{0s}\gamma^2 - 2 h_{3+s} 
\hb_3.\n
\end{align}
The formula for the sequence  $\{ \,T_{j+1}\,\}$ is  uniquely 
determined by requiring that
$\ed(\ed h_j)=0$ for $j=3,\,4,\,...\,$. This implies the recursive 
relation
\be \label{eq:Trecurs}
T_{j+1} = \partial_{\xi} T_j + \frac{j}{2}R\, h_j.
\ee
For example, 
$$T_3=0,  \;  T_4=\frac{3}{2}h_3(\gamma^2-2h_3\hb_3),\;
T_5=\frac{7}{2}\gamma^2h_4 -10h_3\hb_3h_4.$$
\begin{rem}
Similarly as in \eqref{eq:delxnotation},
we use the subscript notation $f_{\xi}$ or $\,\del_{\xi}f$ to denote 
the $\xi$-coefficient of $\ed f$ (and similarly for $f_{\xib}$ or  $
\del_{\xib}f$).
\end{rem}

Based on Eqs.\eqref{eq:dhj}, \eqref{eq:Tj}, we  define the following set of differential forms and  vector fields on $\F{\infty}_0$:

\two
\begin{enumerate}[\qquad a)]
\item
Differential forms:
\be\label{eq:thetaj}
\begin{array}{rlllll}
\eta_j&=\ed h_j&+\im j h_j \rho&&-T_j \xib,\,\quad&\tn{for}\;j\geq 3,\\
\theta_j&=\eta_j &&-h_{j+1}\xi, &&\tn{for}\; j\geq 2. \\
\end{array}
\ee
On the open subset $\,\F{k}_0$, the  set of 1-forms
$$\{ \rho; \, \xi, \xib, \theta_0, \theta_1 , \thetab_{1}, \ldots, \,\theta_{k+1}, \thetab_{k+1};
\,\eta_{k+2}, \etab_{k+2}\}$$
form a coframe. On the open subset $\,\F{\infty}_0$, the  set of 1-forms
\be\label{Finfframe}
\{ \rho; \, \xi, \xib, \theta_0, \theta_1 , \thetab_{1},  \,\theta_2, \thetab_{2}, \ldots \}
\ee
form a coframe.
By construction,
$$\iinf=\langle \theta_0,  \theta_1 , \thetab_{1},  \,\theta_2, \thetab_{2}, \ldots \rangle.$$

\one \item
Vector fields:
\begin{align}
\delx&:=   \tn{total derivative with respect to}\;\xi\mod\iinf,\n\\
\delxb&:= \tn{total derivative with respect to}\;\xib\mod\iinf.\n
\end{align}
On the open subset $\,\F{\infty}_0$,  let
\be\label{Finfframev}
\{ E_{\rho}; \, \delx=E_{\xi}, \delxb=E_{\xib}, E_0, E_1 , E_{\bar{1}}, E_2,E_{\bar{2}}, \ldots \}
\ee
denote the dual frame of \eqref{Finfframe}.
\end{enumerate}

\two
In terms of the 1-forms introduced above, we denote 
the covariant derivatives of  $T_{j}$ by
\be
\ed T_{j} +\im \, (j-1) T_{j} \rho=(\del_{\xi} T_{j}) \xi + (\del_{\xib} T_{j}) \xib +T_{j}^{\bar{3}}\thetab_{3}
+\sum_{s=3}^{j-1}T_{j}^s\theta_s,\quad \,j\geq 3,\n
\ee
where $T_j^s=E_s(T_j)$.

 
Recall from \eqref{eq:strt2},
\begin{align}\label{eq:strt22}
\ed \xi &= \im \rho \w \xi -3\gamma^2 \theta_0\w\thetab_1-\theta_1\w\thetab_2-\hb_3\theta_1\w\xib,  \\
\ed \theta_0&=-\frac{1}{2}\left( \theta_1\w\xi+\thetab_1\w\xib  \right),   \n\\
\ed \theta_1 +\im\rho\w\theta_1&=-\theta_2\w\xi+3\gamma^2\theta_0\w\xib,\n \\
\ed \theta_2+2\im\rho\w\theta_2 &=-\theta_3\w\xi +3h_3\gamma^2\theta_0\w\thetab_1
+h_3\theta_1\w\thetab_2+(\gamma^2+h_3\hb_3)\theta_1\w\xib, \n \\
\ed \rho&=\frac{\im}{2}\left(R\xi\w\xib-2\theta_2\w\thetab_2-\gamma^2\theta_1\w\thetab_1
-2\hb_3\theta_2\w\xib+2h_3\thetab_2\w\xi \right). \n
\end{align}
Extending this, 
we record the structure equations satisfied by the 1-forms $\theta_j$  on $\,\mcf^{(\infty)}_0$.
\begin{lem}\label{lem:prolongedstrt}
For  $\,j\geq 3$,
\begin{align}\label{eq:strt2inf}
\ed \theta_j + \im j \rho\w\theta_j&=-\theta_{j+1}\w\xi
+3\gamma^2  \theta_0\w(T_j\theta_1+h_{j+1}\thetab_1)
+\frac{jh_j}{2}\left( \gamma^2 \theta_1\w\thetab_1+2\theta_2\w\thetab_2 \right)  \\
&\quad+T_j\thetab_1\w\theta_2+h_{j+1}\theta_1\w\thetab_2
+\tau_j'\w\xi+\tau_j''\w\xib,  \quad\tn{where} \n \\
&\quad\tau_j'=  h_3T_j \thetab_1- j h_3h_j  \thetab_2,\n\\
&\quad\tau_j''= \hb_3 h_{j+1}\theta_1+j \hb_3h_j  \theta_2 -(\,T_{j}^{\bar{3}}\thetab_{3}+\sum_{s=3}^{j-1}\,T_{j}^s\theta_s).\n
\end{align}
\end{lem}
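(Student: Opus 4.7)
The strategy is a direct exterior-derivative computation starting from the defining relation $\theta_j = \ed h_j + \im j h_j \rho - T_j \xib - h_{j+1}\xi$, valid for $j \geq 3$. Applying $\ed$ and using $\ed^2 h_j = 0$ yields
\begin{equation*}
\ed\theta_j = \im j\,(\ed h_j)\w\rho + \im j h_j\,\ed\rho - (\ed T_j)\w\xib - T_j\,\ed\xib - (\ed h_{j+1})\w\xi - h_{j+1}\,\ed\xi.
\end{equation*}
I would substitute the rearranged definition $\ed h_j = \theta_j + h_{j+1}\xi - \im j h_j \rho + T_j\xib$ (and analogously for $\ed h_{j+1}$, introducing $\theta_{j+1}, h_{j+2}, T_{j+1}$), the structure equations \eqref{eq:strt22} for $\ed\xi, \ed\rho$ together with the conjugate $\ed\xib$, and the covariant-derivative expansion of $\ed T_j$ recorded immediately before the lemma.

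After substitution, one groups the resulting 2-forms by type. The $\rho\w\theta_j$ contribution from $\im j(\ed h_j)\w\rho$ supplies the $\im j\rho\w\theta_j$ that moves to the left-hand side. The summand $-\theta_{j+1}\w\xi$ comes straight out of $-(\ed h_{j+1})\w\xi$. The ``interior'' terms $3\gamma^2 h_{j+1}\theta_0\w\thetab_1$, $h_{j+1}\theta_1\w\thetab_2$, and $\hb_3 h_{j+1}\theta_1\w\xib$ arise from $-h_{j+1}\,\ed\xi$; their conjugate cousins $3\gamma^2 T_j\theta_0\w\theta_1$, $T_j\thetab_1\w\theta_2$, and $h_3 T_j\thetab_1\w\xi$ arise from $-T_j\,\ed\xib$; and the symmetric block $\tfrac{j h_j}{2}(\gamma^2\theta_1\w\thetab_1 + 2\theta_2\w\thetab_2)$ together with the $\xi$- and $\xib$-pieces $-j h_3 h_j\thetab_2\w\xi$ and $j\hb_3 h_j\theta_2\w\xib$ of $\tau_j', \tau_j''$ all come from $\im j h_j\,\ed\rho$. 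The remaining higher-order pieces $-T_j^{\bar{3}}\thetab_3$ and $-\sum_{s=3}^{j-1} T_j^s \theta_s$ in $\tau_j''$ come solely from $-(\ed T_j)\w\xib$. All $\rho\w\xi$ contributions cancel by the index arithmetic $-j - 1 + (j+1) = 0$, and all $\rho\w\xib$ contributions cancel by $-j + 1 + (j-1) = 0$, so there is no leftover $\rho$-torsion.

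The one place where the structure of the problem really enters is the $\xi\w\xib$ coefficient. Three sources contribute: $-\tfrac{j h_j}{2} R\,\xi\w\xib$ from $\im j h_j\,\ed\rho$, $+T_{j+1}\,\xi\w\xib$ from $-(\ed h_{j+1})\w\xi$, and $-\partial_\xi T_j\,\xi\w\xib$ from $-(\ed T_j)\w\xib$. These sum to $(T_{j+1} - \partial_\xi T_j - \tfrac{j}{2} R h_j)\,\xi\w\xib$, which vanishes precisely by the recursion \eqref{eq:Trecurs}. Since \eqref{eq:Trecurs} was itself the integrability condition $\ed^2 h_j = 0$ used to define $T_{j+1}$ in the first place, the cancellation is essentially tautological — but it is the bookkeeping bottleneck and is where the proof could most easily go astray. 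To guard against sign errors I would verify the base case $j=3$ explicitly, using $T_3 = 0$ and $T_4 = \tfrac{3}{2}h_3(\gamma^2 - 2 h_3\hb_3)$, before asserting the general computation. Once this cancellation is confirmed, the remaining identifications are routine matching of coefficients, and the stated formula follows.
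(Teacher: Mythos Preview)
Your proposal is correct and follows exactly the approach the paper indicates: the paper's own proof reads only ``Direct calculation. We omit the details,'' and you have supplied precisely those details, with the key $\xi\w\xib$ cancellation via \eqref{eq:Trecurs} and the $\rho$-torsion cancellations correctly identified.
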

\begin{proof}
Direct calculation. We omit the details.
\end{proof}

Eqs.\eqref{eq:strt22}, \eqref{eq:strt2inf} imply the following commutation relation for the dual frame.
\begin{cor}\label{lem:commutation}
The dual frame \eqref{Finfframev} satisfies the following commutation relations.
\be\label{eq:lxib}
[E_{\ell},\,E_{\xib} ] =\sum_{j\geq {\ell}+1}T^{\ell}_j E_j, \qquad \ell\geq 3.
\ee
\end{cor}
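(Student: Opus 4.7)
The plan is to compute $[E_\ell,E_{\xib}]$ directly from the structure equations using the standard duality identity: for a coframe $\{\omega^A\}$ on $\F{\infty}_0$ with dual frame $\{E_A\}$,
\[
[E_B, E_C] = -\sum_A d\omega^A(E_B, E_C)\, E_A,
\]
where $d\omega^A(E_B, E_C)$ coincides with the coefficient of $\omega^B\w\omega^C$ in the expansion of $d\omega^A$. For $B=\ell$ (with $\ell\geq 3$) and $C=\xib$, I would inventory, over all coframe elements in \eqref{Finfframe}, the coefficient of $\theta_\ell\w\xib$ in each structure equation.

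The non-trivial contributions all come from $d\theta_j$ for $j\geq 3$, via Lemma \ref{lem:prolongedstrt}. Among the 2-forms on the right-hand side of that lemma, the only piece that can contain $\theta_\ell\w\xib$ with $\ell\geq 3$ is $\tau_j''\w\xib$. Reading off the explicit formula
\[
\tau_j'' = \hb_3 h_{j+1}\theta_1 + j\hb_3 h_j\theta_2 - T_j^{\bar 3}\thetab_3 - \sum_{s=3}^{j-1}T_j^s\theta_s,
\]
the $\theta_\ell$ summand contributes precisely the coefficient $-T_j^\ell$ when $3\leq \ell\leq j-1$ (equivalently $j\geq \ell+1$), and nothing otherwise. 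After the sign in the duality identity, this produces exactly the terms $T_j^\ell E_j$ announced in the statement.

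It then remains to verify that none of the remaining coframe elements contributes. I would check the cases $\rho$, $\xi$, $\theta_0$, $\theta_1$, $\theta_2$ from \eqref{eq:strt22} one by one: none of the right-hand sides there involves $\theta_\ell\w\xib$ with $\ell\geq 3$. For the complex-conjugate equations governing $\xib$, $\thetab_1$, $\thetab_2$, and $\thetab_j$ for $j\geq 3$, complex conjugation converts each factor $\theta_\ell\w\xib$ into $\thetab_\ell\w\xi$, so no $\theta_\ell\w\xib$ term with $\ell\geq 3$ is introduced; in particular, the term $\taub_j'\w\xib$ arising from $d\thetab_j$ contains only $\theta_1$ and $\theta_2$, not $\theta_\ell$ for $\ell\geq 3$.

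The main obstacle is purely bookkeeping: tracking the two minus signs (the one inside $\tau_j''$ and the one in the duality identity) so that they combine to give $+T_j^\ell$ in the final formula, and confirming that the admissible range $3\leq \ell\leq j-1$ is precisely the stated index condition $j\geq \ell+1$. No integrability or analytic input beyond Lemma \ref{lem:prolongedstrt} is needed.
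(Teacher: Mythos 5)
Your proposal is correct and is exactly the paper's argument: the paper proves the corollary by invoking Cartan's formula $\ed\theta(E_a,E_b)=E_a(\theta(E_b))-E_b(\theta(E_a))-\theta([E_a,E_b])$, which for a dual coframe reduces to the duality identity you state, and then reads off the coefficient of $\theta_\ell\w\xib$ from Lemma \ref{lem:prolongedstrt} and \eqref{eq:strt22}. Your sign bookkeeping (the minus inside $\tau_j''$ cancelling the minus in the duality identity) and the index range $3\leq\ell\leq j-1$ are both right; you have merely written out the details the paper omits.
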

\begin{proof}
Let $\theta$ be a differential 1-form, and let $E_a,\,E_b$ be vector fields.
The corollary follows from Cartan's formula
\[ \ed\theta(E_a,\,E_b)=E_a(\theta(E_b))-E_b(\theta(E_a))-\theta([E_a,\,E_b]).
\]
\end{proof}
We introduce the following notations  in order to keep track of orders  ($k\geq 2$):
\begin{itemize}
\item $\mco(k)$: functions on $\xinf$ that do not depend on $h_j$ 
for $j>k$.
\item $\mco(-k)$: functions on $\xinf$ that do not depend on $\hb_j$ 
for $j>k$.
\end{itemize}
\noi
Note for example that 
$$ T_j\in\mco(j-1). $$
\noi

\subsection{Triple cover}\label{sec:doublecover}
\subb{Set up}\label{sec:triplemoti}
The triple cover $\Sigmah\to\Sigma$ of an immersed integral surface of $(X,\mci)$ defined in 
Defn.~ \ref{defn:doublecover} prompts the definition of a global triple cover 
$\hat{X}^{(\infty)}\to\xinf$ such that the following commutative diagram holds:
\one\\
\centerline{\xymatrix{\Sigmah \ar[d]  \;  \ar@{^{}.>}[r] &\;\;\hat{X}^{(\infty)} \ar[d] \\
                                \Sigma  \;  \ar[r]  &\;\;\xinf  }} \\

\two\noi
It turns out that the triple cover $\xinfh$ to be constructed also supports the splitting field
for the  (pseudo) Jacobi equation  for the minimal Lagrangian system, 
see \S\ref{sec:Jacobifields}.

\subb{Definition}\label{sec:tripledefi}
Let
$$K^{\mcf} \to \mcf, \quad K^{\mcf}_{\eta_2} \to \mcf,$$
be the (trivial) complex line bundles generated by the 1-forms $\xi, \eta_2$ respectively.
Recall the principle $\SO(2)$-bundle $\Pi:\mcf \to X$, and the structure equation \eqref{eq:strt2}.
Let
$$K\to X, \quad K_{\eta_2} \to X,$$
be the  induced line bundles respectively.
Note from Eq.\eqref{eq:strt2} that
$$ K_{\eta_2} =K^{-2}.$$

Consider $\mcf^{(1)}=\Pi^*(\X{1})$. Since an element in $\mcf^{(1)}$ is defined by the equation
$$\eta_2-h_3\xi=0$$
for a coefficient $h_3$, we have
$$\mcf^{(1)}\cong \pr( K^{\mcf} \oplus  K^{\mcf}_{\eta_2}) \cong \mcf \times \cp{1}.$$
It follows that
$$ \X{1}\cong \pr(K^{p+1} \oplus K^{p-2})$$
for any integer $p$.
In order to define the triple cover, we choose $\X{1} \cong \pr(K^{3} \oplus \C)$.
\begin{defn}
The \tb{triple cover} $\Xh{1}$ of the first prolongation $\X{1}$ is defined by
\[ \Xh{1}= \pr(K \oplus \C).
\]
Define similarly  $ \Fh{1}= \pr( K^{\mcf} \oplus \C).$
\end{defn}
Denote the triple covering map
\be
\tb{c}:\Fh{1}  \to \F{1}, \n
\ee
which is the standard branched triple cover when restricted to the fibers;
\begin{align}
\cp{1} &\to \cp{1},\n\\
[x,y] & \mapsto [x^3,y^3].\n
\end{align}
It is branched at the two points $0 = [0,1]$ and $\infty = [1,0]$.
By construction, $\tb{c}$ descends to the triple covering map
$$
\tb{c}:\Xh{1}  \to \X{1}.
$$

Let $\Ih{1}=\tb{c}^*\I{1}$ be the pulled back ideal, and define the new differential system
$$(\Xh{1},\,\Ih{1}).$$
It is clear that the present construction has the naturality so that
the triple cover of a minimal Lagrangian surface $\Sigmah\to\Sigma$ admits 
a unique lift to $\Xh{1}$
as an integral surface of $\Ih{1}$.
\begin{defn}
For each $2\leq k \leq \infty$,  define the \tb{triple cover} of the prolongations
\begin{align*}
 \Xh{k}&:=\tb{c}^*(\X{k}), \\
\Fh{k}&:= \tb{c}^*(\F{k}),
\end{align*}
such that we have the commutative diagram:\one
\\
\centerline{
\xymatrix{
 \Xh{k} \ar[d]^{\hat{\pi}_{k,1}} \ar[r]^{\tb{c}_k} & \X{k} \ar[d]^{\pi_{k,1}}  \\
 \Xh{1}  \ar[r]^{\tb{c}} & \X{1}
}}\\

\one\noi  (and similarly for $\Fh{k}, \Fh{1}$, etc). 
Set the ideals $\Ih{k}:=\tb{c}_k^*(\I{k})$.
Denote the pulled-back canonical bundles by
\begin{align*} 
\pi_{k,0}^*K&:=K\to \X{k},\\
\tb{c}_k^*K&:=\hat{K} \to \Xh{k}.\n
\end{align*}
\end{defn}
The following proposition summarizes the construction so far.
\begin{prop}
Let $\x:\Sigma\hook X$ be an immersed integral surface of the differential system for minimal Lagrangian surfaces.
Let $$\x^{(k)}:\Sigma\hook\X{k},\,1\leq k \leq\infty,$$ 
be the prolongation of $\x$.
Let $\nu:\Sigmah\to\Sigma$ be the triple cover defined by the Hopf differential of $\x$, Definition \ref{defn:doublecover}.
There exists a lift $\xh^{(1)}:\Sigmah\hook\Xh{1}$ and the associated sequence of prolongations
\[ \xh^{(k)}:\Sigmah\hook\Xh{k},\quad 2\leq k \leq\infty,
\]
such that
\begin{enumerate}[\qquad a)]
\item  each $\xh^{(k)}$ is integral to $\Ih{k}$,
\item  $\x^{(k)}\circ \nu =\tb{c}_k\circ\xh^{(k)}$.
\end{enumerate}
The lift $\,\xh^{(1)}$ and its prolongation sequence $\{\,\xh^{(k)}\,\}$ are uniquely determined by these  properties.
\end{prop}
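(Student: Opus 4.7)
The plan is to exploit the fact that $\Sigmah$ was engineered precisely so that the cube root $\omega=\sqrt[3]{\ff}$ becomes single-valued on it, constructing $\xh^{(1)}$ explicitly in terms of $\omega$ and propagating to higher $k$ by the universal property of prolongation. On the Zariski-open locus $\Sigma^*\subset\Sigma$ where $\ff\ne0$, the prolongation $\x^{(1)}$ lies in the chart $\X{1}_0$ and is described by the scalar coefficient $h_3$ with $\ff=h_3\xi^3$. In the analogous chart $\Xh{1}_0\subset\Xh{1}$ with fiber coordinate $\hat h$ satisfying $\tb{c}_1^*h_3=\hat h^3$, I define $\xh^{(1)}$ on $\nu^{-1}(\Sigma^*)$ by
\[ \hat h\circ\xh^{(1)} := \omega/\nu^*\xi. \]
Since $(\omega/\nu^*\xi)^3 = \nu^*\ff/(\nu^*\xi)^3 = \nu^*h_3$ by construction of $\Sigmah$, the fiberwise formula $\tb{c}_1:\hat h\mapsto\hat h^3$ immediately yields $\tb{c}_1\circ\xh^{(1)}=\x^{(1)}\circ\nu$, i.e.\ property (b), on the unramified locus. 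The dual construction in the chart $\X{1}_\infty$ using the reciprocal prolongation variable $p_3=1/h_3$ handles any region where $\x^{(1)}$ meets $[1:0]$, and the two constructions glue on their overlap.

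For extension across the umbilic divisor, note that at an umbilic point $p\in\Sigma$ one has $h_3(p)=0$, so $\x^{(1)}(p)=[0:1]$ is a regular point of $\X{1}_0$, and the tautological $\omega$ vanishes at each $q\in\nu^{-1}(p)$ to the order matching the branching index of $\nu$ at $q$; consequently $\hat h\circ\xh^{(1)}$ extends holomorphically across $\nu^{-1}(p)$ with value zero, and $\xh^{(1)}$ extends holomorphically to all of $\Sigmah$. For integrality (a), the map $\tb{c}_1$ is \'etale away from the two fiberwise ramification points $[0:1]$ and $[1:0]$, so on the \'etale locus the identification $\Ih{1}=\tb{c}_1^*\I{1}$ immediately transfers the integrality of $\tb{c}_1\circ\xh^{(1)}=\x^{(1)}\circ\nu$ (which holds by hypothesis on $\x^{(1)}$) into integrality of $\xh^{(1)}$; this extends across the branch locus by continuity of the pulled-back $1$-forms together with the fact that the branch locus is a proper analytic subvariety of $\Sigmah$.

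Higher-order lifts are then defined inductively: given an integral immersion $\xh^{(k)}:\Sigmah\hook\Xh{k}$, its tangent $2$-plane field selects an integral element of $(\Xh{k},\Ih{k})$ at each point, and the pullback identity $\Xh{k+1}=\tb{c}_{k+1}^*\X{k+1}$ together with the universal property of prolongation determines a unique $\xh^{(k+1)}$ satisfying (a) and (b) at level $k+1$. For uniqueness of $\xh^{(1)}$, condition (b) confines $\xh^{(1)}(q)$ to the three-point fiber $\tb{c}_1^{-1}(\x^{(1)}(\nu(q)))$, so any other candidate lift differs from the one constructed above by a global deck transformation of $\tb{c}_1:\Xh{1}\to\X{1}$; the canonical choice is pinned down by matching the cube root to the tautological $\omega$ fixed by the defining equation $\kappa^3=\ff$ of $\Sigmah$, and uniqueness for $k\geq 2$ is immediate from the uniqueness of prolongation. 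The main technical point, and the one that justifies the careful definition of $\Sigmah$, is the holomorphic extension across the ramification divisor, which reduces to a local cube-root computation.
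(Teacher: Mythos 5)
Your proof is correct and is essentially the argument the paper intends: the paper offers no proof beyond remarking that the proposition ``summarizes the construction so far,'' and your explicit lift $[\omega:1]$ via the tautological cube root, the transfer of integrality through the pulled-back ideal $\Ih{k}=\tb{c}_k^*\,\I{k}$ (which, note, needs no \'etaleness at all, since pullback of forms commutes with composition everywhere), and the fiber-product description of the higher prolongations are exactly what that construction amounts to. Two cosmetic points: the extension of $\hat h$ across the ramification divisor is smooth rather than holomorphic (only $\ff=h_3\xi^3$ is holomorphic, $\xi$ being merely a unitary coframe), and conditions a)--b) alone determine the lift only up to a deck transformation of $\tb{c}$, so the tautological normalization you invoke is genuinely part of what ``the lift'' means.
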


\subsection{Balanced coordinates}\label{sec:balanced}
As is often the case with integrable differential equations, the infinite prolongation of the minimal Lagrangian system supports a preferred ring of functions 
called \emph{balanced coordinates}.\ftmark
\fttext{They form a coordinate system when restricted to a fiber 
of the projection $\xinfh_{**}\to\Xh{1}.$} 
   We will find that many of the higher-order objects we are interested in, such as higher-order Jacobi fields and conservation laws, admit the weighted homogeneous expressions
   in terms of these functions.  

\subb{Set up}\label{sec:balancedmoti}
Recall the open subsets $\X{k}_0, \X{k}_{\infty},$ $1\leq k \leq \infty$, from \eqref{eq:X10} and below.
Let $\X{1}_*\subset \X{1}$ be their intersection,
\be
\X{1}_*:=\X{1}_0\cap\X{1}_{\infty}=\{ \, \tn{E}\in\X{1}\;\vert\; \;h_3\vert_E\ne0,\infty\,\}.\n\ee
Inductively define the sequence of open subsets
$\X{k+1}_*:=\pi_{k+1,k}^{-1}(\X{k}_*)\subset \X{k+1}.$
Let
\be
\X{k}_{**}:=\X{k}_*\cap \{ \, h_j \ne \infty,\; 4\leq j \leq k+2\, \}\subset \X{k}_*, \quad k\geq 2.\n
\ee
Denote the corresponding open subsets on the triple cover by  
$$\Xh{k}_0, \Xh{k}_{\infty}, \Xh{k}_*, \Xh{k}_{**}, \;\tn{etc}.
$$
The balanced coordinates to be constructed will be defined on $\Xh{\infty}_{**}.$

Let  $\{\F{k}_0, \F{k}_{\infty}, \F{k}_*, \F{k}_{**}, \Fh{k}_0, \Fh{k}_{\infty}, \Fh{k}_*, \Fh{k}_{**} \}$ 
denote the associated open sets.
  
\subb{Definition}\label{sec:balanceddefi}
With this preparation, we introduce a preferred set of functions on $\xinfh_{**}$ 
which are adapted for the prolongation structure of the minimal Lagrangian system.
\begin{defn}\label{defn:balancedz}
A sequence of \tb{balanced coordinates}
$z_j : \Xh{\infty}_{**} \to \C,   j \geq 4,$ are defined by
$$ z_j:=h_3^{-\frac{j}{3}}h_j.
$$
\end{defn}
It is clear that these functions, originally defined on $\Fh{\infty}_{**}$, are invariant under the action of the structure group $\SO(2)$. They are well defined on $\xinfh_{**}$.

\subb{Structure equation}\label{sec:balancedstrt}
We record the structure equation for the balanced coordinates.
Set
\be\left\{
\begin{array}{rl}
\tn{r}&:=|h_3|=(h_3\hb_3)^{\frac{1}{2}}, \n \\
\omega&:=h_3^{\frac{1}{3}} \xi, \n  \\
\zeta_j&:=h_3^{-\frac{j}{3}}\theta_j,\qquad\quad j\geq 0,  \n  \\
\hat{T}_j &:=h_3^{-\frac{j-1}{3}}T_j, \;\;\quad\quad j\geq 3.\n\\
\end{array}\right.
\ee
They are all invariant under the action of the structure group $\SO(2)$ 
and hence well defined on $\Xh{\infty}_{**}$.  
This enables to express the ideal $\iinfh$ on $\Xh{\infty}_{**}$  by
$$\iinfh=\langle \theta_0, \zeta_j, \zetab_j \rangle,$$
and the generators are defined  on $\Xh{\infty}_{**}$ itself.

\two
Note the following structure equations:
\begin{align}\label{eq:zetastruct}  
\ed \tn{r}&\equiv\frac{\tn{r}}{2}(z_4\omega+\zb_4\omb),  \\
\ed z_j&\equiv \left(z_{j+1}-\frac{j}{3} z_4 z_j \right) \omega
+ \hat{T}_{j} \tn{r}^{-\frac{2}{3}}\omb  \;\mod \iinfh, \quad{\rm for} \; j \geq 4,\n\\
 \hat{T}_{j+1}&= \del_{\omega} \hat{T}_{j}+\frac{j-1}{3}z_4  \hat{T}_j
+\frac{j}{2}(\gamma^2- 2 \tn{r}^2)z_j, \quad{\rm for} \; j \geq 3.\n
\end{align}
Here we set $z_3=1$ for convenience, and denote by $\del_{\omega}$ the operator 
$$\del_{\omega}:=h_3^{-\frac{1}{3}}\delx.$$

From these formulas, we note an important property of $\hat{T}_j$.
\begin{defn}\label{defn:spectralweights}
The \tb{spectral  weights} of the balanced coordinates are
\be\label{eq:spectralweight}
 \tn{weight}(z_j)=j-3, \quad  \tn{weight}(\zb_j)=-(j-3).
\ee
Set $\tn{weight}(\tn{r})=0.$
Assign the weights for the 1-forms
$$ \tn{weight}(\omega)=-1, \quad  \tn{weight}(\ol{\omega})=+1.
$$
\end{defn}
\begin{lem}\label{lem:Tjh}
By definition,
\be
\hat{T}_j \in \C[\tn{r}^2,z_4,z_5,z_6,\ldots].   \n
\ee
It is weighted homogeneous of weight $j-4$ under the spectral weight \eqref{eq:spectralweight}.
\end{lem}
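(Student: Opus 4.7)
The plan is to prove both claims simultaneously by induction on $j$, using the recursion
\[
\hat{T}_{j+1}= \del_{\omega} \hat{T}_{j}+\frac{j-1}{3}z_4 \hat{T}_j+\frac{j}{2}(\gamma^2- 2 \tn{r}^2)z_j
\]
from the last line of \eqref{eq:zetastruct}, starting from the base case $\hat{T}_3=0$ (which follows from $T_3=0$ and is trivially polynomial and of any weight).

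The key preliminary step is to verify that the derivation $\del_{\omega}=h_3^{-\frac{1}{3}}\delx$ preserves the ring $R:=\C[\tn{r}^2,z_4,z_5,\ldots]$ and raises the spectral weight by $1$. From the structure equations \eqref{eq:zetastruct} I would read off
\[
\del_{\omega}(\tn{r}^2)=\tn{r}^2 z_4, \qquad \del_{\omega}(z_j)=z_{j+1}-\frac{j}{3}z_4 z_j,
\]
both of which lie in $R$. By the Leibniz rule this extends to every polynomial in the generators. For the weight statement, the generators $\tn{r}^2$, $z_j$ have weights $0$, $j-3$ respectively under \eqref{eq:spectralweight}, while their $\del_{\omega}$-images have weights $1$, $j-2$, confirming that $\del_{\omega}$ raises weight by one. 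Since $\omega$ has weight $-1$, this is exactly what one expects dimensionally.

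With that in hand, the inductive step is immediate. Assuming $\hat{T}_j\in R$ is weighted homogeneous of weight $j-4$, the three summands in the recursion have weights
\[
(j-4)+1,\qquad 1+(j-4),\qquad 0+(j-3),
\]
all equal to $j-3=(j+1)-4$, and each summand visibly lies in $R$ (using $\gamma^2\in\C$, $z_j\in R$, and the fact that $\del_{\omega}$ and multiplication preserve $R$). Hence $\hat{T}_{j+1}\in R$ with the claimed weight.

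There is no real obstacle here; the only point requiring a small care is getting the weight of $\del_{\omega}$ right and making sure $\del_{\omega}(\tn{r}^2)$ lands in $R$ rather than involving $\tn{r}$ to an odd power. Both follow cleanly from \eqref{eq:zetastruct}, so the proof is a short induction.
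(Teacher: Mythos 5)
Your proof is correct and follows essentially the same route as the paper: induction on $j$ via the recursion in \eqref{eq:zetastruct}, after observing that $\del_{\omega}$ preserves $\C[\tn{r}^2,z_4,z_5,\ldots]$ and raises the spectral weight by one, using exactly the identities $\del_{\omega}(\tn{r}^2)=\tn{r}^2 z_4$ and $\del_{\omega}z_j=z_{j+1}-\frac{j}{3}z_4z_j$. The only cosmetic difference is that you anchor the induction at $\hat{T}_3=0$ while the paper starts from $\hat{T}_4=\frac{3}{2}(\gamma^2-2\tn{r}^2)$ of weight $0$; both are fine.
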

\begin{proof}
From the identities
\begin{align}
\del_{\omega} z_j&=z_{j+1}-\frac{j}{3} z_4 z_j, \n\\
\del_{\omega} ( \tn{r}^2 )&= \tn{r}^2 z_4,\n
\end{align}
the operator $\del_{\omega}$ increases the spectral weight by +1 when acting on
$\C[\tn{r}^2,z_4,z_5,z_6,\ldots]$. Note the initial term
$\hat{T}_4=\frac{3}{2}(\gamma^2-2\tn{r}^2)$ is of weight $\,0$.
The rest follows from the inductive formula for $\hat{T}_j$ in \eqref{eq:zetastruct}.
\end{proof}

\sub{Order vs. spectral weight}
The prolongation variable $h_3$ represents the second fundamental form of the minimal Lagrangian surface, and hence it is a second order object.
Consequently, the jet order of the balanced coordinate $z_j$ is $j-1$.

However, for the sake of convenience, 
we re-define the \tb{order} of the functions $z_j, \zb_j$ as follows:
\be\label{table:order}
\begin{array}{r|r|r}
&\tn{order}&\tn{spectral weight}\\
\hline
z_j & j & j-3\\
\zb_j& j &-(j-3)
\end{array}\ee

\section{Two lemmas}\label{sec:prolonglemmas}    
We record two useful lemmas regarding the $\delxb$-equation on $\xinfh$.
Among other things, they will be applied to give a classification of the higher-order Jacobi fields in \S\ref{sec:Jacobifields}.

Lemma~\ref{lem:lemma5.4'} is a variant of the fact that the minimal Lagrangian system is not Darboux integrable at any order.
Lemma~\ref{lem:delbpoly0} is a rigid property of the polynomial ring $\C[z_4,z_5,z_6, \, ... \, ]$ under the $\delxb$-operator.

\subsection{Lemma \ref{lem:lemma5.4'} }
\begin{lem}\label{lem:lemma5.4'}
Let $f: U\subset \xinfh\to\C$ be a scalar function on an open subset $U\subset\xinfh$ such that
$$\delxb f= c h_3^{-\frac{1}{3}}$$
for a constant $c$.  From the structure equation, this implies that for some $k>0$,
\be\label{eq:Lemma5.4'}
\ed f    \equiv c h_3^{-\frac{1}{3}}\xib \mod\;  \xi,   \theta_0,  \theta_1,  \thetab_1, \theta_2,\thetab_2,
\theta_3, \theta_4,\,...\,\theta_{k},
\ee
i.e., such $f$ does not depend on any of the conjugate variables $\hb_j$, $j\geq 3.$
Then $c=0$ necessarily and $f$ is a constant function.
\end{lem}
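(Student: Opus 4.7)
I would approach the lemma by a weight-graded analysis in the balanced coordinates on $\xinfh_{**}$. Since $f$ is $\SO(2)$-invariant and independent of $\hb_j$ for $j \geq 3$, it cannot depend on $r^2 = h_3\hb_3$ or any $\zb_j$; up to base-dependence (handled at the end), it is therefore a polynomial $F(z_4, z_5, \ldots, z_N)$ in the balanced coordinates, with $N$ finite by \eqref{eq:Lemma5.4'}. Using the structure equation \eqref{eq:zetastruct}, a direct computation of the $\xib$-coefficient of $\ed F$ yields $\delxb f = h_3^{-1/3}\sum_{j\geq 4}\hat{T}_j\,\partial_{z_j}F$, so the hypothesis becomes the polynomial identity
\[
\sum_{j\geq 4}\hat{T}_j\,\partial_{z_j}F \;=\; c.
\]

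The crucial structural fact, verified by a short induction from the recursion in \eqref{eq:zetastruct}, is that $\hat{T}_j$ is affine in $r^2$:
\[
\hat{T}_j \;=\; \alpha_j\,\gamma^2\,z_{j-1} \;+\; r^2\,\hat{B}_j, \qquad \alpha_j = \tfrac{(j+2)(j-3)}{4},
\]
with $\hat{B}_j \in \C[z_4,\ldots,z_{j-1}]$ weighted-homogeneous of spectral weight $j-4$ (initial cases $\hat{B}_4 = -3,\ \hat{B}_5 = -10\,z_4,\ \hat{B}_6 = -10\,z_4^2 - 15\,z_5$). Since $F$ does not depend on $r^2$, matching the $r^2$-free and the $r^2$-coefficient parts splits the identity into
\[
\gamma^2\sum_j \alpha_j\,z_{j-1}\,\partial_{z_j}F \;=\; c, \qquad \sum_j \hat{B}_j\,\partial_{z_j}F \;=\; 0.
\]
Both operators lower spectral weight by one, so under the decomposition $F = \sum_w F_w$ the identities decouple by weight. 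The only weight-$1$ polynomial in $\C[z_4,z_5,\ldots]$ is $z_4$, giving $F_1 = c_1\,z_4$; substitution yields $\tfrac{3}{2}\gamma^2 c_1 = c$ and $-3c_1 = 0$, which force $c_1 = 0$ and hence $c = 0$.

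With $c = 0$ both equations are homogeneous at each weight. For $w \geq 2$ one substitutes the general weight-$w$ monomial ansatz and checks (directly at $w = 2, 3$, and inductively thereafter using the recursion for $\hat{B}_j$) that the joint kernel of $L_A = \sum \alpha_j z_{j-1}\partial_{z_j}$ and $L_B = \sum \hat{B}_j\partial_{z_j}$ consists only of the zero polynomial; hence $F_w = 0$ for $w \geq 1$ and $F$ is constant in the $z_j$. An analogous but easier $\delxb$-analysis on the base $X$---where no term of the form $h_3^{-1/3}$ can arise from differentiating a pure-base function---excludes any remaining nontrivial base dependence, so $f$ is constant. The main obstacle is the inductive step that the joint kernel of $(L_A, L_B)$ is trivial at every positive weight; this rigidity embodies the non-Darboux-integrability of the minimal Lagrangian system noted in the introduction to \S\ref{sec:prolonglemmas}.
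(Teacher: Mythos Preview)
Your observation that $\hat T_j=\alpha_j\gamma^2 z_{j-1}+r^2\hat B_j$ is affine in $r^2$ is correct and pleasant, and the resulting split into the two first-order equations $L_A F=c$, $L_B F=0$ is valid. But the argument has two genuine gaps.

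First, the step ``it is therefore a polynomial $F(z_4,\ldots,z_N)$'' is unjustified. The hypotheses give only a smooth function on an open subset of $\xinfh$; nothing forces $f$ to be a polynomial (or even to admit a convergent weight-graded expansion about $z_j=0$). Without polynomiality the decomposition $F=\sum_w F_w$ has no meaning, and the weight-$1$ computation that yields $c=0$ does not get off the ground. The paper's argument avoids this entirely: it inducts on the order $k$ and uses the commutation relations $[E_k,\delxb]$, $[E_{k-1},\delxb]$ applied to $f$ to force $f^k=\mathrm{const}\cdot h_3^{-k/3}$ (by the induction hypothesis at lower order), then derives a contradiction from the $[E_{k-1},\delxb]$ relation unless $f^k=0$, thereby reducing to $\mco(k-1)$. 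No polynomial structure is needed.

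Second, even granting polynomiality, your assertion that the joint kernel of $(L_A,L_B)$ on positive-weight polynomials is zero is exactly the content of the lemma and is not proved. ``Checking directly at $w=2,3$ and inductively thereafter'' is not an argument: no induction scheme is set up, and the obvious one (on $w$) does not close without further structure. Note that at the origin $z_j=0$ one has $L_A=\tfrac{3}{2}\gamma^2\partial_{z_4}$ and $L_B=-3\,\partial_{z_4}$, so the two fields are parallel there; showing that the Lie algebra they generate is transitive (which is what you would need for general $F$) is a substantial computation, not an afterthought. The base-dependence step is likewise only gestured at. Your route might be completable by proving this transitivity, but as written the proof is incomplete at its two key junctures.
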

\begin{cor}\label{cor:lemma5.4}
Let $f: U\subset \xinfh\to\C$ be a scalar function on an open subset $U\subset\xinfh$ such that
\be\label{eq:Lemma5.4}
\delxb f =0.
\ee
Then $f$ is a constant function.
\end{cor}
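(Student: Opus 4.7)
The plan is straightforward: the corollary is precisely Lemma~\ref{lem:lemma5.4'} specialized to $c = 0$. Given $\delxb f = 0$ on an open set $U \subset \xinfh$, one trivially rewrites this as $\delxb f = 0 \cdot h_3^{-1/3}$, so the hypothesis of the lemma is met with $c = 0$, and the conclusion ``$f$ is a constant function'' follows by direct invocation.

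The one point that needs to be checked in passing is that the intermediate structural implication \eqref{eq:Lemma5.4'} of the parent lemma—that $f$ cannot depend on any of the conjugate prolongation variables $\hb_j$ for $j \geq 3$—still goes through in the special case $c = 0$. The argument there is purely structural: using the covariant derivative identities \eqref{eq:dhj} together with the coframe \eqref{Finfframe}, any nonzero $\del f / \del \hb_j$ at the highest order $j$ on which $f$ actually depends would contribute to $\delxb f$ a term involving $\hb_{j+1}$ (and lower-order conjugate curvature polynomials from $T_{j+1}$), and such a term cannot appear on the right-hand side $c h_3^{-\frac{1}{3}}$ regardless of whether $c$ vanishes. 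Running this obstruction downward from the top order forces $\del f / \del \hb_j = 0$ for all $j \geq 3$; if anything, the $c = 0$ specialization only strengthens the obstruction.

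Once $f$ is known to depend only on $h_3, h_4, \ldots$ and the base coframe variables on $\xinfh$, the equation $\delxb f = 0$ combined with the commutation relations of Cor.~\ref{lem:commutation} and the closed structure equations \eqref{eq:strt22}, \eqref{eq:strt2inf} propagates the vanishing to all components of $\ed f$ by the same over-determined analysis that closes out the parent lemma. No genuinely new technical content is required for the corollary; it is stated separately only because $c = 0$ is the form in which the result will feed into the classification of higher-order (pseudo) Jacobi fields carried out in \S\ref{sec:Jacobifields}.
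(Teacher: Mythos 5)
Your proof is correct and matches the paper's (implicit) treatment exactly: the corollary is nothing but Lemma~\ref{lem:lemma5.4'} with $c=0$, and the paper offers no separate argument for it. Your additional remarks about the conjugate-variable dependence are a harmless re-verification of what the parent lemma already establishes for every constant $c$, including $c=0$.
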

The corollary states that the infinitely prolonged minimal Lagrangian differential system is not Darboux integrable,
see \cite{Bryant1995b} for a discussion of Darboux integrability (in the hyperbolic case). Roughly, this means that no matter how many times one differentiates, the minimal Lagrangian system under consideration, $\gamma^2\ne 0$, does not admit a  Weierstra\ss\, type of holomorphic representation formula. This is in contrast with the case $\gamma^2=0$ where the minimal Lagrangian system is   equivalent to the holomorphic differential system for complex curves in $\C^2.$

\two
A proof of Lem.\ref{lem:lemma5.4'} can be obtained by a direct adaptation of the proof of the corresponding lemma for the differential system for constant mean curvature surfaces given in \cite{Wang2013}. Although the analysis goes by straightforward computations, it is a little involved and, 
in order to avoid repetition, let us content ourselves with a brief description of the relevant ideas.

We shall apply  induction  on $k$.
Assume $k\geq 5$. The case $k\leq 4$ can be checked by a direct computation.

Let $\mathtt{I}$ be the Pfaffian system generated by
\[ \mathtt{I}=\langle \, \thetab_2, \thetab_1,\,\theta_0,\,\xi,\,\theta_1,\,\theta_2,\, ... \,\theta_{k}\,\rangle.
\]
Our claim is that there is no nonzero closed 1-form $\alpha$ of the form
$$\ed f=\alpha= c h_3^{-\frac{1}{3}}\xib + \btheta, \quad \btheta\in\mathtt{I}.
$$
 
Denote  a 1-form in $\texttt{I}$ by
$$\btheta=a_{\bar{2}}\thetab_2+a_{\bar{1}}\thetab_1+a_0\theta_0
+a_{\xi}\xi+a_1\theta_1+\sum_{j=2}^{k}a_j\theta_j.$$
From the equation (which follows from the identity $\ed^2=0$) 
$$\ed\alpha\equiv 0\mod\mathtt{I},$$ 
collect $\thetab_3\w\xib$-terms and one gets
\[ a_{\bar{2}}=-\sum_{j=4}^ka_j T^{\bar{3}}_j.
\]

Let $\mathtt{I}^{(\bar{1})}$ be the Pfaffian system generated by
\[ \mathtt{I}^{(\bar{1})}=\langle \,\ol{\btheta}_1,  \btheta_0,\,\xi,\,\btheta_1,\,\btheta_2,\, ... \,\btheta_{k}\,\rangle,
\]
where we now set
\begin{align}
\ol{\btheta}_{1}&=\thetab_{1}, \btheta_{0}=\theta_0, \;\btheta_1=\theta_1, \;\btheta_2=\theta_2,  \;\btheta_3=\theta_3, \n\\
\btheta_j&=\theta_j-T^{\bar{3}}_j\thetab_2, \quad \tn{for}\;j\geq 4.\n
\end{align}
Denote  a (new) 1-form in $\mathtt{I}^{(\bar{1})}$ by
$$\btheta=a_{\bar{1}}\ol{\btheta}_{1}+a_0\btheta_0
+a_{\xi}\xi+a_1\btheta_1+\sum_{j=2}^{k}a_j\btheta_j.$$
Now our claim is that there is no nonzero closed 1-form $\alpha$ of the form
$$ \alpha= c h_3^{-\frac{1}{3}}\xib + \btheta, \quad \btheta\in\mathtt{I}^{(\bar{1})}.
$$

Repeat the similar computations and, by induction argument,  solve for the sequence of coefficients 
$\{\, a_{\bar{1}}, a_0, a_{\xi}, a_1, a_2, \, ... \, \}$.
Continuing this process,  one arrives at the following  normal form for the closed 1-form $\alpha$:
$$\ed f=  \alpha= c  \left(h_3^{-\frac{1}{3}}\xib +f^{\xi}\xi+\sum_{j=-2}^{k}f^j\theta_j\right),
\qquad ( \theta_{-2}=\ol{\theta}_{2}, \; \tn{etc}),
$$
for the given constant $c$, where $$f^j\in\mco(k-1)\quad\tn{for}\;\; j\geq 4.$$

\two
\begin{itemize}
\item Suppose $c=0$. Then $\ed f =0$ and $f$ is a constant.

\item Suppose $c\ne 0$. We show that this leads to a contradiction.
Applying  the commutation relation \eqref{eq:lxib},
$$[E_k, \delxb]=\sum_{j\geq k+1}T_j^k E_j,$$ 
to the given function $f$,
one gets
$$ \delxb(f^k)= E_k(ch_3^{-\frac{1}{3}})=0.
$$
By the induction hypothesis, this implies that $f^k$ is a constant multiple of $h_3^{-\frac{k}{3}}$.

\begin{itemize}
\item  Suppose $f^k=0$. Then $f\in\mco(k-1)$ and, again by the induction hypothesis, $f\equiv \tn{constant}$. Hence $c=0$, a contradiction.

\item Suppose $f^k\ne 0$.
Applying  the commutation relation 
$$ [E_{k-1}, \delxb]=\sum_{j\geq k}T_j^{k-1} E_j$$ 
to the given function $f$, one gets (since $k\geq 5$)
$$ -\delxb(f^{k-1})=T^{k-1}_k f^k.
$$
It is easily checked that, using the induction hypothesis, this forces $f^k=0$, a contradiction.
\end{itemize}\end{itemize}

\subsection{Lemma \ref{lem:delbpoly0}}
Consider the polynomial rings in the balanced coordinates,
$$\mcr:=\C[z_4, z_5, z_6, \, ... \, ],  \quad \ol{\mcr}:=\C[\zb_4, \zb_5, \zb_6, \, ... \, ].
$$
Recall the spectral weights in Defn.\ref{defn:spectralweights}.
Define the associated sequences of polynomial vector spaces filtered by the spectral weight 
as follows: 
\be
\begin{cases}
\quad P_d&=\{ \tn{weighted homogeneous polynomials of degree $d\geq 0$ in  $\mcr$} \}, \n\\
\quad \mcp_d&=\oplus_{i=0}^{d} P_i\subset\mcr, \n\\
\quad \mcp_d(k)&=\mcp_d \cap \mco(k)\subset\mcr\cap \mco(k), \n\\
\quad Q_d&=P_d\oplus (h_3\hb_3) P_d, \n\\ 
\quad \mcq_d&= \oplus_{i=0}^{d} Q_{i},\n\\
\quad  \mcq_d(k)&=\mcq_d \cap \mco(k).\n
\end{cases}
\ee

\one
The following lemma, which is an application of Lem.\ref{lem:lemma5.4'},
records a characteristic rigidity property of the subspace $\mcp_d(k)$ 
under the differential operator $h_3^{\frac{1}{3}}\delxb$.  
\begin{lem}\label{lem:delbpoly0}
Let $v\in\mco(k), k\geq 4$.  
Suppose
$$h_3^{\frac{1}{3}}\delxb v\in\mcq_{d}(k). 
$$
Then $$v\in\mcp_{d+1}(k).$$
\end{lem}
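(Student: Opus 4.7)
The plan is to prove the lemma in two stages: first reduce the functional dependence of $v$ to just the balanced coordinates $z_4,\ldots,z_k$, and then establish the polynomial weighted-degree bound by strong induction on $k$ with a sub-induction on $d$. The first stage uses $\SO(2)$-equivariance together with Corollary~\ref{cor:lemma5.4}; the second combines the induction with a leading-coefficient analysis of $\hat{T}_k$ built on the recursion~\eqref{eq:zetastruct}.

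For the first stage, write $v$ in the $\SO(2)$-invariant coordinates $(\tn{r}^2, z_4,\ldots,z_k,\zb_4,\zb_5,\ldots)$ on $\xinfh_{**}$. Using~\eqref{eq:zetastruct} and its complex conjugate,
\[
h_3^{1/3}\delxb v = \sum_{j=4}^k \frac{\del v}{\del z_j}\hat{T}_j + \tn{r}^{2/3}\Big[\sum_{j\geq 4}\frac{\del v}{\del\zb_j}\big(\zb_{j+1}-\tfrac{j}{3}\zb_4\zb_j\big) + \tn{r}^2\zb_4\frac{\del v}{\del \tn{r}^2}\Big].
\]
Since $\mcq_d(k)$ involves only integer powers of $\tn{r}^2$, the bracketed $\tn{r}^{2/3}$-factor must vanish. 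Expanding $v=\sum_{m\geq 0}v_m(z_*,\zb_*)\tn{r}^{2m}$ and matching powers of $\tn{r}^{2m+2/3}$ yields $Dv_m + m\zb_4 v_m = 0$ with $D = \sum_{j\geq 4}(\zb_{j+1}-\tfrac{j}{3}\zb_4\zb_j)\del/\del\zb_j$. Conjugating this equation and reinterpreting it on $\xinfh_{**}$ via $\bar\zb_j\leftrightarrow z_j$, $\bar D\leftrightarrow\del_\omega$ produces $\del_\omega F + mz_4 F = 0$ for the $\SO(2)$-invariant function $F$ built from the conjugate of $v_m$. The substitution $F = h_3^{-m}G$ (using $\del_\omega\log h_3 = z_4$) reduces this to $\delx G = 0$, so $G$ is a constant $c$ by the conjugate of Corollary~\ref{cor:lemma5.4}, giving $F = ch_3^{-m}$. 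For $m\geq 1$, $h_3^{-m}$ has nonzero $\SO(2)$-weight, forcing $c=0$ and $v_m=0$; for $m=0$, $v_0$ is independent of $\zb_*$. Thus $v = v(z_4,\ldots,z_k)$.

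For the second stage, the equation becomes $\sum_j(\del v/\del z_j)\hat{T}_j\in\mcq_d(k)$. Proceed by strong induction on $k\geq 4$ with a sub-induction on $d$. The base case $k=4$ is immediate: $(3\gamma^2/2-3\tn{r}^2)\del v/\del z_4\in\mcq_d(4)$ gives $\del v/\del z_4\in\mcp_d(4)$, hence $v\in\mcp_{d+1}(4)$. For $k\geq 5$, since $\hat{T}_j$ for $j\leq k$ is $z_k$-independent, differentiating by $\del/\del z_k$ yields $h_3^{1/3}\delxb(\del v/\del z_k)\in\mcq_{d-(k-3)}(k)$. When $d\geq k-3$, the $d$-sub-induction gives $\del v/\del z_k\in\mcp_{d-k+4}(k)$; integrating in $z_k$ produces $v = P + u$ with $P\in\mcp_{d+1}(k)$ and $u\in\mco(k-1)$ satisfying $h_3^{1/3}\delxb u\in\mcq_d(k-1)$, and the $k$-induction completes the argument. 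When $d<k-3$, Corollary~\ref{cor:lemma5.4} forces $\del v/\del z_k = c$ constant. The sub-case $d=k-4$ is immediate since $cz_k\in\mcp_{d+1}(k)$ and we recurse on $v-cz_k\in\mco(k-1)$. In the remaining sub-case $d<k-4$ one must show $c=0$: setting $v_1=v-cz_k$, the strong $k$-induction applied at $k'=k-1$, $d'=k-4$ gives $v_1\in\mcp_{k-3}(k-1)$; splitting $\hat{T}_j=U_j+\tn{r}^2 V_j$ and matching the $z_{k-1}$-coefficient in the weight-$(k-4)$ component of $h_3^{1/3}\delxb v_1 = -c\hat{T}_k$ reduces to the linear condition $c(2a_k+\gamma^2 b_k)=0$, where $a_k, b_k$ denote the $z_{k-1}$-coefficients of $U_k, V_k$. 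A direct calculation from the recursion in~\eqref{eq:zetastruct} gives $2a_k+\gamma^2 b_k = -3\gamma^2\neq 0$, so $c=0$.

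The most delicate step is the $d<k-4$ sub-case of the second stage. It relies on the strong inductive hypothesis to pin down the polynomial structure of $v_1$, together with precise leading-coefficient tracking for $\hat{T}_k$; the conclusion $c=0$ rests on the numerical identity $2a_k+\gamma^2 b_k = -3\gamma^2\neq 0$, which invokes the standing hypothesis $\gamma^2\neq 0$ of the paper.
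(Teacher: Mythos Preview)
Your Stage 2 is sound, including the delicate sub-case $d<k-4$: the identity $2a_k+\gamma^2 b_k=-3\gamma^2$ is correct once one remembers that the $h_{k-1}$-coefficient of $T_k$ receives contributions from \emph{both} $s=0$ and $s=k-4$ in the sum $\sum_s a_{k-1,s}h_{k-1-s}\del_\xi^s R$.

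Stage 1, however, has a real gap. The claim that ``the bracketed $\tn{r}^{2/3}$-factor must vanish because $\mcq_d(k)$ involves only integer powers of $\tn{r}^2$'' does not stand: on the open set $\tn{r}>0$, the function $\tn{r}^{2/3}=(\tn{r}^2)^{1/3}$ is a perfectly smooth function of $\tn{r}^2$, so there is no algebraic separation between the two summands. More seriously, the expansion $v=\sum_{m\geq 0}v_m(z_*,\zb_*)\tn{r}^{2m}$ is unjustified for a smooth $v$, and fails even for polynomial invariants: a weight-zero monomial $\prod h_j^{a_j}\prod\hb_l^{b_l}$ rewrites as $\tn{r}^{2N/3}\prod z_j^{a_j}\prod\zb_l^{b_l}$ with $N=\sum ja_j$ not necessarily divisible by $3$ (e.g.\ $h_4\hb_4=\tn{r}^{8/3}z_4\zb_4$). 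So the matching of $\tn{r}^{2m+2/3}$-powers, and with it the separated equations $Dv_m+m\zb_4v_m=0$, collapses.

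The conclusion you want is nonetheless correct and has a one-line proof: since $\delxb\hb_N=\hb_{N+1}$, the top $\hb$-order of $v$ produces an $\hb_{N+1}$-term in $h_3^{1/3}\delxb v$ that cannot lie in $\mcq_d(k)$; peeling off $\hb_N,\ldots,\hb_3$ in turn forces $v$ to be independent of all $\hb_j$, hence a function of $z_4,\ldots,z_k$ alone. This is essentially how the paper proceeds, but without splitting into two stages: it uses $[E_k,\delxb]=0$ on $\mco(k)$ together with Cor.~\ref{cor:lemma5.4} to conclude directly that $v$ is a polynomial in $z_k$ with coefficients $v_j\in\mco(k-1)$ (which may still carry $\hb_\ell$-dependence), and then feeds each $v_j$ back into the inductive hypothesis via the recursion
\[
h_3^{1/3}\delxb v_j+(j+1)v_{j+1}\hat T_k\in\mcq_{d-j(k-3)}(k-1),
\]
descending from $j=m-1$ to $j=0$. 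This produces the degree bound without any sub-case split on the relation between $d$ and $k-3$, whereas your route---after patching Stage 1---needs the extra leading-coefficient computation precisely because you have already stripped away the $\mco(k-1)$ flexibility that absorbs the remainder in the paper's argument.
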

\begin{cor}\label{cor:delbpoly}
Let $u\in\mco(k), k\geq 4$.  Let $u^{k}=E_{k} (u)=\frac{\del u}{\del h_{k }}$.
Suppose
$$h_3^{\frac{1}{3}}\delxb (h_3^{\frac{k}{3}} u^{k }) \in\mcq_{d}(k). 
$$
Then $h_3^{\frac{k}{3}}u^k\in\mcp_{d+1}(k)$, and hence
$$u\in\mcp_{d+(k-2)}(k)\mod \mco(k-1).$$
\end{cor}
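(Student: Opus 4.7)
The strategy is to apply Lemma \ref{lem:delbpoly0} to $v := h_3^{k/3} u^k$ and then reconstruct $u$ modulo $\mco(k-1)$ by integration in the balanced coordinates. Since $u \in \mco(k)$ and $u^k = E_k(u) = \del u/\del h_k$ (as can be read off the coframe via \eqref{eq:thetaj}, since each $\theta_\ell$ contributes $\ed h_\ell$ to the dual pairing), we have $u^k \in \mco(k)$, and hence $v = h_3^{k/3} u^k \in \mco(k)$ on $\Xh{\infty}_{**}$, where $h_3^{1/3}$ is single-valued. The assumed bound $h_3^{1/3}\delxb v \in \mcq_d(k)$ is exactly the hypothesis of Lemma \ref{lem:delbpoly0}, which immediately yields the first conclusion $v \in \mcp_{d+1}(k)$.

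For the second conclusion I would pass to balanced coordinates. On $\Xh{\infty}_{**}$ the fiber variables $(h_3, \bar h_3, h_4, \bar h_4, \ldots)$ can be traded for $(h_3, \bar h_3, z_4, \bar z_4, \ldots)$ via $h_j = h_3^{j/3} z_j$. Holding $h_3$ and all the other $z_\ell, \bar z_\ell$ fixed, varying $h_k$ alters only $z_k$ among the $z$-coordinates, with $\del z_k/\del h_k = h_3^{-k/3}$; the chain rule therefore gives $\del u/\del h_k = h_3^{-k/3}\,\del u/\del z_k$, i.e.\ $h_3^{k/3} u^k = \del u/\del z_k$. The first conclusion now reads $\del u/\del z_k \in \mcp_{d+1}(k)$.

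Finally I would integrate in $z_k$: write $u = F + g$, where $F$ is the antiderivative of $\del u/\del z_k$ in the variable $z_k$ whose monomials all contain $z_k$, and $g$ is the remainder independent of $z_k$. Each monomial of $\del u/\del z_k$ has spectral weight at most $d+1$, and integration in $z_k$ raises the weight by $\tn{weight}(z_k) = k-3$, placing $F \in \mcp_{d+1+(k-3)}(k) = \mcp_{d+k-2}(k)$. The remainder $g$ depends only on $h_3, \bar h_3$, the balanced coordinates $z_4, \ldots, z_{k-1}$ and $\bar z_4, \bar z_5, \ldots$, and base variables on $X$, none of which involves $h_k$; so $g \in \mco(k-1)$. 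This gives $u \equiv F \mod \mco(k-1)$ with $F \in \mcp_{d+k-2}(k)$, as desired. The only point requiring genuine care is the chain-rule identity $h_3^{k/3} u^k = \del u/\del z_k$, which rests on the decoupling $\del z_\ell/\del h_k = \delta_{\ell k}\, h_3^{-k/3}$ at fixed $h_3$ and on the identification $E_k = \del/\del h_k$; once this is in hand, the remainder of the argument is bookkeeping with weighted polynomials.
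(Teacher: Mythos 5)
Your proposal is correct and follows the paper's own route: the paper's entire proof is ``substitute $v=h_3^{k/3}u^k$ into Lemma \ref{lem:delbpoly0},'' leaving the passage from $h_3^{k/3}u^k=\del u/\del z_k\in\mcp_{d+1}(k)$ to $u\in\mcp_{d+(k-2)}(k)\bmod\mco(k-1)$ implicit. You have simply filled in that implicit step (the chain-rule identity and the weight count under $z_k$-integration), and both are carried out correctly.
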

\begin{proof}
Substitute $v=h_3^{\frac{k}{3}} u^{k }$ from Lem.\ref{lem:delbpoly0}.
\end{proof}
\noi\emph{Proof of Lem.~ \ref{lem:delbpoly0}}.\, 
Consider the case $k=4.$ For functions in $\mco(4)$, we have the commutation relation
$[E_4,\delxb]=0.$ Hence by applying $E_4$ repeatedly to $\delxb v$, we get
$$h_3^{\frac{1}{3}}\delxb E^{m}_4(v)=0$$
for some $m\leq d+1$.
By Cor~.\ref{cor:lemma5.4}, $v$ is a polynomial in $z_4$ and one may write
$$v= v_m z_4^m+v_{m-1}z_4^{m-1}+\, ... \, +v_1 z_4+v_0,
$$
where  $v_j\in\mco(3)$ with $v_m$ being a constant.
Substitute this to the given equation for $h_3^{\frac{1}{3}}\delxb v$, 
and one gets the recursive equations
$$h_3^{\frac{1}{3}}\delxb v_j+(j+1)v_{j+1}\frac{3}{2}R
=c'_j\gamma^2+c_j^{"}h_3\hb_3, \quad j=m, m-1, \, ... \,,$$
for constants $c'_j,c_j^{"}$ (set $v_{m+1}=0$).
Since $v_m$ is a constant and 
$$h_3^{\frac{1}{3}}\delxb z_4=\frac{3}{2}R=\frac{3}{2}(\gamma^2-2h_3\hb_3),$$
whereas $v_j\in\mco(3)$,
an inductive argument using  Lem.\ref{lem:lemma5.4'} for $j$ from $m-1$ to $0$ shows that
all the coefficients $v_j$ must be constant.  This shows $$v\in\mcp_{d+1}(4).$$

Applying the induction argument, suppose the claim is true up to $\mco(k-1)$.
Let $v\in\mco(k)$.  For functions in $\mco(k)$, we have the commutation relation
$[E_k,\delxb]=0.$ Hence, similarly as above,  $v$ is a polynomial in $z_k$  
and one may write
$$v= v_m z_k^m+v_{m-1}z_k^{m-1}+\, ... \, +v_1 z_k+v_0,
$$
where $v_j\in\mco(k-1)$, $m(k-3)\leq d+1$, and $v_m$ being a constant.
Substitute this to the given equation for $h_3^{\frac{1}{3}}\delxb v$, 
and one gets the recursive equations
$$h_3^{\frac{1}{3}}\delxb v_j+(j+1)v_{j+1}\hat{T}_k\in \mcq_{d-j(k-3)}(k-1),
\quad j=m, m-1, \, ... \,.$$
By Lem.\ref{lem:Tjh}, $\hat{T}_k\in  \mcq_{k-4}(k-1)$.
It follows by the similar inductive argument as above, with decreasing $j$ from $m-1$ to $0$,
that  $$v_j\in \mcp_{d-j(k-3)+1}(k-1), \quad  m-1\geq j\geq 0.$$
This shows $v\in\mcp_{d+1}(k).$  \hfill $\square$

\section{Jacobi fields}\label{sec:Jacobifields}
From the general theory of differential equations, \cite{Vinogradov1989},
a generating function of  symmetry of a differential equation
is characterized as an element in the kernel of its linearization.
For   the minimal Lagrangian system, it turns out that
the generating functions of symmetries are Jacobi fields.

Since the normal bundle of a Lagrangian surface is canonically isomorphic to the cotangent bundle,
the corresponding Jacobi operator (i.e., the defining equation for Jacobi fields)
reduces to the second order operator
$$\delx\delxb+\frac{3}{2}\gamma^2.$$
In particular, Jacobi fields correspond to the eigenfunctions of 
 Laplacian of the induced metric. 

In this section, we give a complete classification of  the (pseudo) Jacobi fields
for the minimal Lagrangian system
based on the results of \S\ref{sec:prolonglemmas}.
An infinite sequence of higher-order (pseudo) Jacobi fields will be constructed 
in the next section by
the similar  recursion procedure as discussed in \S\ref{sec:recursionrel}.

\two
In \S\ref{sec:JacobiSymmetry}, 
we show that  a Jacobi field is a generating function of symmetry
for the minimal Lagrangian system.
In \S\ref{sec:splitting}, 
we prove a splitting theorem that a Jacobi field 
decomposes as the sum of a classical Jacobi field 
and a higher-order Jacobi field. 
Combining this with the results from \cite{Fox2011,Fox2012}, 
we obtain a complete classification of Jacobi fields in \S\ref{sec:classification}. 
We find that a higher-order ($\geq5$) Jacobi field exists at each odd order $5,$ or $1$ mod $6.$

The similar analysis applies to the 
classification of  pseudo-Jacobi fields.
A pseudo-Jacobi field  corresponds to a symmetry of 
the elliptic Tzitzeica equation underlying the minimal Lagrangian system.
We find that a higher-order ($\geq 4$)  pseudo-Jacobi field exists 
at each even order $4,$ or $2$ mod $6.$

\subsection{Jacobi fields and pseudo-Jacobi fields}\label{sec:JacobiSymmetry}
\subb{Definition}
Motivated by Defns.\ref{defn:classicalJacobifield},  \ref{defn:classicalpseudoJacobifield},
we give a general definition of (pseudo) Jacobi fields on $\xinfh.$\ftmark\fttext{It turns out that the 
infinite sequence of higher-order (pseudo) Jacobi fields belong  to
$\mcr\oplus\ol{\mcr}\subset C^{\infty}(\xinfh_{**}).$}
\begin{defn}\label{defn:Jacobifield}
A scalar function $A:\xinfh\to\C $ is a \tb{Jacobi field} if it satisfies the linear Jacobi equation
\be\label{eq:Jacobieq}
\mce(A):=\delx\delxb A+\frac{3}{2} \gamma^2A=0.
\ee
The $\C$-vector space of Jacobi fields is denoted by  $\mfj^{(\infty)}$.

A scalar function $P:\xinfh\to\C$ is a \tb{pseudo-Jacobi field} if it satisfies the linear pseudo-Jacobi equation
\be\label{eq:Jacobieq'}
\mce'(P):=\delx\delxb P+\frac{1}{2} ( \gamma^2+4h_3\hb_3)P=0.
\ee
The $\C$-vector space of pseudo-Jacobi fields is denoted by  $\mfj'^{(\infty)}$.

A \tb{higher-order Jacobi field} is a Jacobi field in the ring of the balanced coordinates $\mcr\oplus\ol{\mcr}$. The subspace of higher-order Jacobi fields is denoted by $\mfj^{(\infty)}_h$.

Let $\mfj^{(k)}\subset\mfj^{(\infty)}$ be the subspace of \tb{Jacobi fields of order $\leq k+2$}  defined on $\Xh{k}$. Let $\mfj^{(k)}_h=\mfj^{(k)}\cap\mfj^{(\infty)}_h.$
The space of \tb{Jacobi fields of order $k+2$} is defined as the quotient space
\begin{align}
\mfj^k&=\mfj^{(k)}/\mfj^{(k-1)},\quad k\geq 1,  \n  \\
\mfj^0&=\mfj^{(0)}=\{\tn{\,classical Jacobi fields }\}.\n
\end{align}

The corresponding subspaces of pseudo-Jacobi fields are denoted  by
$\mfj'^{(\infty)}_h$,  $\mfj'^{(k)}$, $\mfj'^{(k)}_h$, $\mfj'^k,\mfj'^{(0)}$.
\end{defn}
\subb{Stability lemma}
Before we proceed to the analysis of Jacobi fields and symmetries, 
let us record the useful formulas related to the stability of the ring of balanced coordinates under the Jacobi operators $\mce,\mce'$.
\begin{lem}\label{lem:mcez}
Consider  the polynomial ring $\mcr=\C[z_4, z_5, \, ... \, ]$ of balanced coordinates.
We have,
\begin{align}
\mce(z_j)&=\hat{T}_{j+1}-\frac{j}{3}(z_j \hat{T}_4+z_4 \hat{T}_j)+\frac{3}{2} \gamma^2z_j, \n\\
\mce'(z_j)&=\hat{T}_{j+1}-\frac{j}{3}(z_j \hat{T}_4+z_4 \hat{T}_j)+\frac{1}{2}(\gamma^2+4\tn{r}^2) z_j.\n
\end{align}
It follows that
$$\mce(\mcr), \mce'(\mcr)\subset \mcr\oplus (h_3\hb_3)\mcr.
$$
In particular, the operators $\mce, \mce'$ preserve the spectral weight when acting on $\mcr$.
\end{lem}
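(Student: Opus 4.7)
The plan is a direct calculation from the structure equations \eqref{eq:zetastruct}, followed by an extension to products via the Leibniz rule. First I would read off $\delxb z_j$: using $\omega = h_3^{1/3}\xi$, $\omb = \hb_3^{1/3}\xib$, the identity $\ed z_j \equiv (z_{j+1}-\tfrac{j}{3}z_4z_j)\omega + \hat{T}_j \tn{r}^{-2/3}\omb \mod \iinfh$ yields $\delx z_j = h_3^{1/3}(z_{j+1}-\tfrac{j}{3}z_4z_j)$ and $\delxb z_j = \hb_3^{1/3}\tn{r}^{-2/3}\hat{T}_j = h_3^{-1/3}\hat{T}_j$. Equivalently, $\delx = h_3^{1/3}\del_\omega$ and $\delxb = \hb_3^{1/3}\del_{\omb}$ on the $\SO(2)$-invariant ring $\mcr$.

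Next I would apply $\delx$ to $\delxb z_j$ by Leibniz, using $\delx h_3 = h_4$ (hence $\delx h_3^{-1/3} = -\tfrac{1}{3}z_4$) together with $\delx \hat{T}_j = h_3^{1/3}\del_\omega \hat{T}_j$, and invoke the recursion for $\hat{T}_{j+1}$ from \eqref{eq:zetastruct} to substitute $\del_\omega \hat{T}_j = \hat{T}_{j+1} - \tfrac{j-1}{3} z_4\hat{T}_j - \tfrac{j}{2}(\gamma^2 - 2\tn{r}^2) z_j$. Combining gives
\[
\delx\delxb z_j \;=\; \hat{T}_{j+1} - \tfrac{j}{3}z_4 \hat{T}_j - \tfrac{j}{2}(\gamma^2 - 2\tn{r}^2) z_j,
\]
and recognizing $\hat{T}_4 = \tfrac{3}{2}(\gamma^2 - 2\tn{r}^2)$ rewrites the last term as $-\tfrac{j}{3} z_j \hat{T}_4$. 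Adding $\tfrac{3}{2}\gamma^2 z_j$ or $\tfrac{1}{2}(\gamma^2+4\tn{r}^2)z_j$ produces the two stated formulas for $\mce(z_j)$ and $\mce'(z_j)$.

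To extend the inclusion $\mce(\mcr), \mce'(\mcr)\subset \mcr \oplus (h_3\hb_3)\mcr$ to an arbitrary polynomial $P \in \mcr$, I would use Leibniz: for a monomial $P = z_{j_1}\cdots z_{j_k}$ one obtains $\delxb P = h_3^{-1/3}R_P$ with $R_P = \sum_i z_{j_1}\cdots \hat{T}_{j_i}\cdots z_{j_k}\in \mcr\oplus (h_3\hb_3)\mcr$ (by Lemma~\ref{lem:Tjh}, each $\hat{T}_{j_i}\in \C[\tn{r}^2,z_4,z_5,\ldots]$ has degree $\leq 1$ in $\tn{r}^2$), and then $\delx\delxb P = -\tfrac{1}{3}z_4 R_P + \del_\omega R_P$. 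Because $\del_\omega \tn{r}^2 = \tn{r}^2 z_4$ and $\del_\omega$ maps $\mcr$ into itself, the subspace $\mcr\oplus(h_3\hb_3)\mcr$ is $\del_\omega$-stable, so the claim follows. Weight preservation is then transparent from the explicit formula: by Lemma~\ref{lem:Tjh} each of $\hat{T}_{j+1}, z_4\hat{T}_j, z_j\hat{T}_4$, $\gamma^2 z_j$, and $\tn{r}^2 z_j$ carries spectral weight $j-3$, and the general case follows by linearity and the fact that $\delx, \delxb$ shift weight by $\pm1$ in opposite directions.

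The main bookkeeping hurdle is tracking the fractional powers $h_3^{\pm 1/3}, \hb_3^{\pm 1/3}$ arising from $\omega = h_3^{1/3}\xi$; the key simplification is that $\delx\hb_3 = 0$, so $\del_\omega$ commutes through $\hb_3^{1/3}$, and the factor $h_3^{1/3}$ coming from $\delx$ exactly cancels the $h_3^{-1/3}$ sitting in front of $\hat{T}_j$ when the second derivative is taken. Once this cancellation is verified, the rest is a mechanical application of Leibniz plus the $\hat{T}$-recursion.
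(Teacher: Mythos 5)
Your computation is correct and is exactly the direct verification that the paper leaves implicit: read off $\delx z_j=h_3^{1/3}(z_{j+1}-\tfrac{j}{3}z_4z_j)$ and $\delxb z_j=h_3^{-1/3}\hat{T}_j$ from \eqref{eq:zetastruct}, apply Leibniz with $\delx h_3^{-1/3}=-\tfrac13 z_4$, and substitute the $\hat{T}$-recursion. The only small imprecision is your attribution of the bound $\deg_{\tn{r}^2}\hat{T}_j\le 1$ to Lemma~\ref{lem:Tjh}, which only asserts $\hat{T}_j\in\C[\tn{r}^2,z_4,z_5,\ldots]$; the degree bound (which is genuinely needed for the inclusion in $\mcr\oplus(h_3\hb_3)\mcr$) instead follows from the explicit formula \eqref{eq:Tj}, since each $T_{j+1}$ is linear in $\hb_3$ and $h_3^{-j/3}h_{j-s}h_{3+s}\hb_3=z_{j-s}z_{3+s}\tn{r}^2$.
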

Note $\hat{T}_4=\frac{3}{2}(\gamma^2-2 \tn{r}^2).$
 
\subb{Jacobi field  and symmetry}
It was observed in \S\ref{sec:classicallaws} that there exists an isomorphism between the classical Jacobi fields and the classical symmetries. The higher-order analogue of this isomorphism is true, and a Jacobi field uniquely determines a (vertical) symmetry vector field of $(\xinfh, \iinfh)$.

\two
Consider the coframe of $\Fh{\infty}$, 
\[\{\,\rho,\,\xi,\,\xib,\,\theta_0,\,\theta_1,\,\thetab_1,\,\theta_2,\, \thetab_2,\,... \,\},\]
and its dual frame
\[\{\,E_\rho,\,E_{\xi},\,E_{\xib},\,E_0,\,E_1,\,E_{\bar{1}},\,E_2,\, E_{\bar{2}},\,... \,\}.\]
By a vector field on $\xinfh$, we mean a vector field (derivation) on $\Fh{\infty}$ of the form
$$
V=V_{\xi}E_{\xi} +V_{\xib}E_{\xib}+V_0 E_0+\sum_{j=1}^{\infty} ( V_j E_j + V_{\bar{j}}E_{\bar{j}})$$
which is invariant under the action of the structure group $\SO(2)$ of the principal bundle $\Fh{\infty}\to\xinfh$. We denote the set of vector fields on $\xinfh$ by $H^0(T\xinfh)$.
\begin{defn}
A vector field  $V\in H^0(T\xinfh)$  is a \tb{symmetry} of the differential system $(\xinfh,\Ih{\infty})$
 if the formal Lie derivative $ \mcl_V$ preserves the ideal $\Ih{\infty}$,
\[  \mcl_V  \Ih{\infty} \subset \Ih{\infty}.\]
The $\C$-algebra of symmetry vector fields is denoted by $\mathfrak{S}$.

A symmetry $V\in\mathfrak{S}$ is \tb{vertical} when $V_{\xi}=V_{\xib}=0$ and it has no (horizontal) $E_{\xi}, \,E_{\xib}$ components. The subspace of vertical symmetries is denoted by $\mathfrak{S}_v$.
\end{defn}
 
\one
We wish to give an analytic characterization of symmetry. Consider a vertical symmetry
\be\label{eq:symvf}
V=V_0 E_0+\sum_{j=1}^{\infty} ( V_j E_j + V_{\bar{j}}E_{\bar{j}}).
\ee
We claim that $V_0$ is necessarily a Jacobi field,
and that it is the generating function of symmetry in the sense that
$V_j, V_{\bar{j}}$'s are determined by $V_0$ and its successive derivatives.
 
\two
\textbf{Step 0}.
The condition that the Lie derivative $\mcl_V\theta_0\equiv 0\mod\Ih{\infty}$ shows that
\begin{align}
\ed V_0-V\lrcorner \left(\frac{1}{2}(\theta_1\w\xi+\thetab_1\w\xib)\right)&\equiv
\ed V_0 - \frac{1}{2}(V_1 \xi+V_{\bar{1}} \xib) \n\\
&\equiv 0 \mod \iinfh.\n
\end{align}
One gets
\be\label{eq:symstep0}
\ed V_0\equiv \frac{1}{2}(V_1 \xi+V_{\bar{1}} \xib)\mod \iinfh.
\ee

\textbf{Step 1}.
By a similar computation, the condition that the Lie derivatives $\mcl_V\theta_1, \, \mcl_V\thetab_1\equiv 0\mod\I{\infty}$ shows that
\be\label{eq:symstep1}\begin{array}{rcrll}
 \ed V_1 + \im V_1\rho&\equiv&
  V_2 \xi&-3\gamma^2 V_0\xib,& \\
\ed V_{\bar{1}} - \im V_{\bar{1}}\rho&\equiv&
V_{\bar{2}}\xib&-3\gamma^2 V_0\xi,&\mod\;\iinfh.
\end{array}
\ee
Note that Eq.\eqref{eq:symstep0} and Eq.\eqref{eq:symstep1} imply that $V_0$ is a Jacobi field.

\textbf{Step j}.
The rest of the coefficients $V_j, V_{\ol{j}}, j\geq 2,$ are similarly determined by $V_0$ and its successive derivatives by the condition that the Lie derivatives 
$\mcl_V\theta_j, \, \mcl_V\thetab_j\equiv 0\mod\Ih{\infty}$. 
These equations imply that
\be\label{eq:symstepj}\begin{array}{rcll}
 \ed V_j +j \im V_j\rho&\equiv&
 \left( V_{j+1} +\{ V_j, V_{\bar{j}}, \, ... \, V_0\} \right) \xi+\{ V_{j-1}, V_{\ol{j-1}}, \, ... \, V_0\}\xib, \\
 \ed V_{\bar{j}} -j \im V_{\bar{j}}\rho&\equiv&
 \left(V_{\ol{j+1}} +\{ V_j, V_{\bar{j}}, \, ... \, V_0\} \right) \xib+\{ V_{j-1}, V_{\ol{j-1}}, \, ... \, V_0\}\xi,
\mod\;\iinfh.&\\
\end{array}
\ee
Here $\{ V_{j}, V_{\ol{j}}, \, ... \, V_0\}$, etc, is a generic notation for an expression
which is linear in $\{ V_k, V_{\bar{k}}\}_{k=0}^{j}$ with coefficients in the ring $\C[h_3, \hb_3,h_4,\hb_4, \, ... \, ]$.
 
\two
In fact, the following converse of this analysis is true.
\begin{prop}\label{prop:symmetry}
The generating function of a vertical symmetry of the differential system for minimal Lagrangian surfaces 
is a Jacobi field.
Conversely, a Jacobi field $A$ uniquely determines a vertical symmetry $V$ 
of the form \eqref{eq:symvf} with the generating function $V_0=A$.
As a consequence, there exists a canonical isomorphism
\[ \mathfrak{J}^{(\infty)}\simeq \mathfrak{S}_v.\]
\end{prop}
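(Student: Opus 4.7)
The forward direction is essentially laid out in the excerpt. Given $V \in \mathfrak{S}_v$ of the form \eqref{eq:symvf}, the conditions $\mcl_V\theta_0,\, \mcl_V\theta_1,\, \mcl_V\thetab_1 \equiv 0 \mod \iinfh$ translate into \eqref{eq:symstep0} and \eqref{eq:symstep1}. Reading off $V_1 = 2\delx V_0$ and $V_{\bar 1} = 2\delxb V_0$ from \eqref{eq:symstep0} and substituting into the $\xib$-coefficient of \eqref{eq:symstep1} yields $\delxb\delx V_0 = -\tfrac{3}{2}\gamma^2 V_0$, which upon identifying $\delx\delxb$ with $\delxb\delx$ on scalars (justified by the infinitely prolonged structure equations) is the Jacobi equation \eqref{eq:Jacobieq}. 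So the projection $V \mapsto V_0$ sends $\mathfrak{S}_v$ into $\mathfrak{J}^{(\infty)}$.

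For the converse, my plan is a recursive construction. Given $A \in \mathfrak{J}^{(\infty)}$, set $V_0 := A$, $V_1 := 2\delx A$, $V_{\bar 1} := 2\delxb A$, so that \eqref{eq:symstep0} holds by definition. The Jacobi equation then furnishes $\delxb V_1 = -3\gamma^2 A$, which is exactly the compatibility required by \eqref{eq:symstep1}; the $\xi$-coefficient of $\ed V_1 + \im V_1\rho$ defines $V_2$ (and the conjugate equation defines $V_{\bar 2}$). Continuing inductively with \eqref{eq:symstepj}, I define $V_{j+1}$ and $V_{\bar{j+1}}$ as the $\xi$- (resp.\ $\xib$-) coefficient of $\ed V_j + \im j V_j\rho$ (resp.\ $\ed V_{\bar j} - \im j V_{\bar j}\rho$), after subtracting the prescribed algebraic tail $\{V_j, V_{\bar j}, \ldots, V_0\}$.

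The main obstacle is to verify that this induction closes consistently: at each order $j \geq 2$, the $\xib$-coefficient of $\ed V_j + \im j V_j\rho$ must automatically agree with the expression $\{V_{j-1}, V_{\ol{j-1}}, \ldots, V_0\}$ dictated by \eqref{eq:symstepj}, and similarly for the conjugate recursion. My plan to dispatch this is to apply $\ed$ to \eqref{eq:symstepj} at the previous order and invoke $\ed^2 = 0$ together with Lemma \ref{lem:prolongedstrt}, whose structure equations for the $\theta_j$ encode the differential closure of $\iinfh$. A careful bookkeeping should reduce each higher compatibility, after a finite algebraic manipulation, to the single scalar Jacobi equation for $V_0$, with no new constraint appearing. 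This is the decisive use of the involutivity of the minimal Lagrangian system, and I expect it to be the most delicate part of the argument.

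Uniqueness is then immediate: any vertical symmetry $V'$ with $V'_0 = A$ must, by running the same recursion on \eqref{eq:symstep0} and \eqref{eq:symstepj}, have all its higher coefficients coincide with those produced by my construction. The two assignments $V \mapsto V_0$ and $A \mapsto V$ are therefore mutually inverse $\C$-linear maps, which gives the claimed canonical isomorphism $\mathfrak{J}^{(\infty)} \simeq \mathfrak{S}_v$.
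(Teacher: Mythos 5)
Your proposal follows essentially the same route as the paper: the forward direction is read off from \eqref{eq:symstep0}--\eqref{eq:symstep1} exactly as in Steps 0 and 1, and the converse is the same recursion on \eqref{eq:symstepj}, with the compatibility of the recursive defining equations (which the paper also only asserts, deferring to the formula for $T_j$ and its differential consequences) identified as the one point requiring verification. Your plan to settle that compatibility via $\ed^2=0$ and Lemma \ref{lem:prolongedstrt} is the intended mechanism, so the argument is correct at the same level of detail as the paper's own proof.
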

\begin{proof}
The compatibility of the recursive defining equations \eqref{eq:symstep0}, \eqref{eq:symstep1}, and \eqref{eq:symstepj} can be checked from the formula for $T_j$ and its differential consequences.
We omit the details.
\end{proof}

\subb{Interpretation of  pseudo-Jacobi field}\label{sec:interpseudo}
The pseudo-Jacobi fields correspond  to the symmetries of 
the elliptic Tzitzeica equation underlying the minimal Lagrangian system.
The symmetries of the elliptic Tzitzeica equation 
are classified in \cite{Fox2011,Fox2012}.

Away from the umbilic divisor on a minimal Lagrangian surface, 
take a local holomorphic coordinate $z$ such that
$$\ed z =h_3^{\frac{1}{3}}\xi,$$
and the Hopf differential is normalized to $\ff=(\ed z)^3.$
Set accordingly,
$$\xi=e^{\frac{u}{2}}\ed z, \quad h_3=e^{-\frac{3}{2}u}$$
for a real scalar function $u$.
The connection 1-form $\rho$ is given by
$\rho=\frac{\im}{2}(u_z\ed z -u_{\ol{z}}\ed\ol{z}),$ 
and it follows that the curvature $R$ is,
$$R=-2 e^{-u} u_{z\ol{z}}.$$
The compatibility equation \eqref{eq:Gauss} then translates to the elliptic Tzitzeica equation
\be\label{eq:Tzitzeica}
u_{z\ol{z}}+\frac{1}{2}\left(\gamma^2 e^u-2 e^{-2u}\right)=0.
\ee

On the other hand, consider the pseudo-Jacobi operator \eqref{eq:Jacobieq'}.
From $\ed z=h_3^{\frac{1}{3}}\xi$, one gets $\del_z=h_3^{-\frac{1}{3}}\delx$. Hence
$$\delx\delxb=e^{-u} \del_z\del_{\ol{z}},$$
and the pseudo-Jacobi operator  translates to
$$\mce'=e^{-u}\left(\del_z\del_{\ol{z}}+\frac{1}{2}(\gamma^2 e^u+4 e^{-2u})\right).$$
Up to scaling by $e^{-u}$, this is the linearization of \eqref{eq:Tzitzeica}
and the claim follows.

\subb{Examples}
\begin{exam}\label{exam:initialdata}
A direct computation show that
$$z_4$$ is a pseudo-Jacobi field (of order 4 and degree 1), and
$$z_5-\frac{5}{3}z_4^2$$ is a Jacobi field  (of order 5 and degree 2).
\end{exam}
This example provides a hint for the existence of  higher-order (pseudo) Jacobi fields.

\subsection{Splitting theorem}\label{sec:splitting}
For a  nonlinear  differential equation, it is a stringent condition to admit 
a higher-order symmetry, not to mention an infinite number of them. 
The corresponding Jacobi equation  would imply
a variety of compatibility conditions for the given differential equation, 
and generically one expects that the space of Jacobi fields is trivial.

In this section,
we explore the constraints given by the Jacobi equation 
of the minimal Lagrangian system
by a repeated application of Lem.\ref{lem:lemma5.4'}, Lem.\ref{lem:delbpoly0}, 
and Cor.\ref{cor:delbpoly}. 
As a result, we obtain a rough normal form for Jacobi fields and this implies  that 
the space of Jacobi fields splits into the direct sum of the classical Jacobi fields and the higher-order Jacobi fields. 

A similar splitting theorem holds for the  pseudo-Jacobi fields.

\subb{Symbol lemma}
We start with a lemma on the normal form for the highest order term of a Jacobi field.
\begin{lem}\label{lem:Jacobinormal}
Let $A\in \mfj^{(k)}\subset\mco(k+2)$ be a (pseudo) Jacobi field. Suppose $A^{k+2}=E_{k+2}(A)\ne 0$.
Then $A$ is at most linear in the highest order variable $z_{k+2}=h_{3}^{-\frac{k+2}{3}}h_{k+2}$, and, 
up to constant scale, it admits the normal form
$$ A=z_{k+2}+\mco(k+1).
$$
\end{lem}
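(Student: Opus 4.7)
The plan is to apply the highest-order vertical derivation $E_{k+3}$ to the (pseudo-)Jacobi equation and use the rigidity result Cor.\ref{cor:lemma5.4} to pin down $A^{k+2}$ as a constant multiple of $h_3^{-(k+2)/3}$. Since $A\in\mco(k+2)$ we have $E_{k+3}(A)=0$ and, by Cor.\ref{lem:commutation}, also $[E_{k+3},E_{\xib}]A=0=[E_{k+2},E_{\xib}]A$. A direct scan of the structure equations \eqref{eq:strt22} and \eqref{eq:strt2inf}, together with their conjugates and $\ed\rho$, shows that among the coframe \eqref{Finfframe} the wedge $\theta_{k+3}\w\xi$ occurs in the exterior derivative only for the entry $\theta_{k+2}$, with coefficient $-1$ (for $k\ge 1$ the auxiliary pieces $\tau_j', \tau_j''$ and the conjugate terms $\overline{\tau_j''}\w\xi$ in $\ed\thetab_j$ contribute no such wedge). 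Cartan's formula then yields
$$[E_{k+3},E_\xi]=E_{k+2}.$$

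With these commutators in hand, unwinding $E_{k+3}\mce(A)=0$ gives
$$E_{k+3}(\delx\delxb A)=[E_{k+3},\delx]\delxb A=E_{k+2}(\delxb A)=\delxb A^{k+2},$$
where $E_{k+3}(\delxb A)=0$ is used in the first equality and $[E_{k+2},\delxb]A=0$ in the last. For the pseudo-Jacobi operator $\mce'$, the coefficient $\tfrac12(\gamma^2+4h_3\hb_3)\in\mco(3)$ is annihilated by $E_{k+3}$ for $k\ge 1$, so the same identity results. Hence $\delxb A^{k+2}=0$. Set $V:=h_3^{(k+2)/3}A^{k+2}$: the identity $T_3=0$ gives $\delxb h_3^{(k+2)/3}=0$, whence $\delxb V=0$, while the prefactor $h_3^{(k+2)/3}$ cancels the $\SO(2)$-weight of $A^{k+2}$, so $V$ descends to a scalar function on $\xinfh_{**}$. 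Cor.\ref{cor:lemma5.4} now forces $V$ to equal a constant $c$. Thus $A^{k+2}=c\,h_3^{-(k+2)/3}$ is independent of $h_{k+2}$, so $A$ is linear in $h_{k+2}$ and $A-c\,z_{k+2}\in\mco(k+1)$. The hypothesis $A^{k+2}\ne 0$ forces $c\ne 0$, and rescaling yields the stated normal form.

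The only nonroutine ingredient is the commutator identity $[E_{k+3},E_\xi]=E_{k+2}$. The conclusion is natural, being dual to $\delx h_{k+2}=h_{k+3}$, but its verification demands a patient check that none of the auxiliary pieces $\tau_j',\tau_j''$, nor the conjugate terms in $\ed\thetab_j$, nor any summand in $\ed\rho$, hides a $\theta_{k+3}\w\xi$-contribution. Once this is secured, what remains is a direct application of Cor.\ref{cor:lemma5.4} together with a short $\SO(2)$-weight bookkeeping to descend from $\Fh{\infty}$ to $\xinfh_{**}$.
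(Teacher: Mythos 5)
Your proposal is correct and is essentially the paper's argument in dual form: the paper extracts the $h_{k+3}$-coefficient of $A_{\xi}\equiv h_{k+3}A^{k+2}$ and $A_{\xi,\xib}\equiv h_{k+3}\delxb(A^{k+2})\bmod\mco(k+2)$ to force $\delxb(A^{k+2})=0$, which is exactly your application of $E_{k+3}$ to the Jacobi equation via $[E_{k+3},E_{\xi}]=E_{k+2}$. Both then invoke Cor.~\ref{cor:lemma5.4}; your explicit $\SO(2)$-weight bookkeeping with $V=h_3^{(k+2)/3}A^{k+2}$ supplies a detail the paper leaves implicit.
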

\begin{proof}
Since $A\in\mco(k+2)$, we have
\begin{align}
A_{\xi}&\equiv h_{k+3} A^{k+2} \mod \mco(k+2), \n\\
A_{\xi,\xib}&\equiv h_{k+3} \delxb(A^{k+2})\mod \mco(k+2),\n \\
&\equiv 0\mod\mco(k+2), \quad \tn{for \;$\mce(A)=0$\, (\tn{or}\; $\mce'(A)=0$).}\n
\end{align}
This forces $\delxb(A^{k+2})=0$. By Cor.\ref{cor:lemma5.4}, $A^{k+2}$ is a constant multiple of $h_3^{-\frac{k+2}{3}}$.
\end{proof}

\subb{Splitting theorem}
We refine Lem.\ref{lem:Jacobinormal} 
to the splitting theorem for (pseudo) Jacobi fields with the help of Cor.\ref{cor:delbpoly}.

Recall $\mcr=\C[z_4, z_5, z_6,\,...  \, ]$, $\ol{\mcr}=\C[\zb_4, \zb_5, \zb_6,\,...  \, ].$
\begin{prop}\label{prop:splitting}
The space of Jacobi fields splits into the direct sum
$$ \mathfrak{J}^{(\infty)}= \mathfrak{J}^{(\infty)}_h\oplus  \mathfrak{J}^{(0)}
$$
of the higher-order Jacobi fields and the classical Jacobi fields.
By definition,
$$ \mathfrak{J}^{(\infty)}_h \subset \mcr\oplus \ol{\mcr}
$$
and the space of higher-order Jacobi fields is generated by the un-mixed weighted homogeneous polynomial Jacobi fields.

A similar splitting theorem holds for the space of pseudo-Jacobi fields.
\end{prop}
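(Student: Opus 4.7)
My plan is to induct on the order $k+2$ of the Jacobi field and, at each stage, peel off a leading contribution in $\mcr\oplus\ol{\mcr}$ using the symbol Lemma~\ref{lem:Jacobinormal} together with the polynomial rigidity Corollary~\ref{cor:delbpoly}, leaving a Jacobi field of strictly lower order to which the induction hypothesis applies. The base case $k=0$ is tautological, and directness of the sum $\mfj^{(0)}\oplus\mfj^{(\infty)}_h$ is automatic: a constant is not a Jacobi field (since $\mce(1)=\tfrac{3}{2}\gamma^2\ne 0$), so $\mfj^{(0)}\cap(\mcr\oplus\ol{\mcr})$ reduces to $\{0\}$.

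For the inductive step, let $A\in\mfj^{(k)}$ with $k\geq 1$ genuinely of order $k+2$. Lemma~\ref{lem:Jacobinormal} and its complex conjugate (with $E_{\ol{k+2}}$ in place of $E_{k+2}$) force $A^{k+2}=c_{+}\,h_3^{-(k+2)/3}$ and $A^{\ol{k+2}}=c_{-}\,\hb_3^{-(k+2)/3}$ for constants $c_{\pm}$, so that $B:=A-c_{+}z_{k+2}-c_{-}\zb_{k+2}$ is a Jacobi field whose order-$(k+2)$ part consists only of \emph{mixed} contributions depending simultaneously on a top holomorphic variable and on some conjugate variable. The crucial step is to exclude these mixed monomials. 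I would set $v:=E_{k+2}(B)$ and compute $h_3^{1/3}\delxb\bigl(h_3^{(k+2)/3}v\bigr)$ using $\delx\delxb A+\tfrac{3}{2}\gamma^2 A=0$, the commutators of Cor.~\ref{lem:commutation}, and the polynomial shape of $T_j$ given by Lemma~\ref{lem:Tjh}; the outcome should fall into some $\mcq_{d}(k+1)$, at which point Corollary~\ref{cor:delbpoly} places $v$ into $\mcp_{d+1}(k+1)$. The conjugate variant applied to $E_{\ol{k+2}}(B)$ then rules out any genuinely mixed top-order contribution, $B$ drops to order $\leq k+1$, and the inductive hypothesis yields the decomposition $\mfj^{(\infty)}=\mfj^{(\infty)}_h\oplus\mfj^{(0)}$.

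For the weighted-homogeneous assertion, Lemma~\ref{lem:mcez} ensures that $\mce$ preserves the spectral weight on $\mcr$, so the weighted-homogeneous components of any polynomial Jacobi field in $\mcr$ are separately Jacobi fields; the analogous conclusion on $\ol{\mcr}$ is immediate by complex conjugation. The pseudo-Jacobi case proceeds identically, with $\mce$ replaced by $\mce'$: the proofs of the symbol lemma, Corollary~\ref{cor:delbpoly}, and Lemma~\ref{lem:mcez} apply verbatim. I expect the main obstacle to be the middle paragraph, specifically the iterative descent on the multi-degree in the $(h_\ell,\hb_\ell)$-variables: each residual $\delxb$-equation must be brought precisely into the $\mcq_{d}$-form demanded by Corollary~\ref{cor:delbpoly}, rather than merely into $\mco(k+1)$. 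Without such careful bookkeeping one cannot invoke polynomial rigidity, and the exclusion of mixed monomials---the geometric reflection of the nonlinearity of the underlying Tzitzeica-type equation---would fail.
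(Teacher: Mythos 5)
There is a genuine gap, and it sits at the heart of your induction. You subtract $c_{+}z_{k+2}+c_{-}\zb_{k+2}$ from $A$ and assert that the remainder $B$ is again a Jacobi field of lower order, to which the induction hypothesis applies. But $z_{k+2}$ and $\zb_{k+2}$ are not Jacobi fields (by Lemma~\ref{lem:mcez}, $\mce(z_j)=\hat T_{j+1}-\tfrac{j}{3}(z_j\hat T_4+z_4\hat T_j)+\tfrac32\gamma^2 z_j\neq 0$; indeed the only order-$5$ Jacobi field is $z_5-\tfrac53 z_4^2$, not $z_5$), so $B$ satisfies the \emph{inhomogeneous} equation $\mce(B)=-c_+\mce(z_{k+2})-c_-\mce(\zb_{k+2})$ and the induction does not close. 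Relatedly, the "mixed monomials" you flag as the crucial obstacle do not exist at top order: Lemma~\ref{lem:Jacobinormal} already determines $E_{k+2}(A)$ to be a constant multiple of $h_3^{-(k+2)/3}$, so $B$ is automatically independent of $h_{k+2}$ and $\hb_{k+2}$, and your computation with $v=E_{k+2}(B)$ is vacuous ($v=0$).

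The paper's proof is organized precisely to avoid this trap. One does not peel off a single monomial; one runs a descent $A=z_{k}+p_{(1)}+\cdots+p_{(j)}+u_{(j+1)}$ in which each correction $p_{(i)}\in\mcp_{k-3}(k-i)$ is produced by feeding the accumulated defect $q_{(j)}=\mce(z_k+p_{(1)}+\cdots+p_{(j)})\in\mcq_{k-3}(k-j)$ (controlled by Lemma~\ref{lem:mcez}) into the $\delxb$-equation for $u_{(j+1)}^{\,k-(j+1)}$ and invoking Cor.~\ref{cor:delbpoly}. The partial sums are \emph{not} Jacobi fields at any intermediate stage; only after the descent terminates at $\mco(3)$, the conjugate descent is superposed, and the decomposition $A=f+g$ is obtained, does one use spectral-weight preservation to force the weight-$\pm1$ terms $c'z_4+c''\zb_4$ to vanish and conclude $\mce(f)=0=\mce(g)$. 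Your proposal also omits the final (and necessary) step of showing that the residual $g\in\mco(3)\cap\mco(-3)$ actually descends to $X$, i.e.\ that $E_3(g)=E_{\ol 3}(g)=0$, so that $g$ is a \emph{classical} Jacobi field. Your remarks on directness of the sum and on weighted-homogeneous components are fine, but the core of the argument needs to be restructured along the lines above.
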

\begin{proof}
Consider first the Jacobi field case.
Let $A\in\mco(k)$ be a Jacobi field for
$k\geq 5$.  By Lem.\ref{lem:Jacobinormal} above, we may set
$$ A=z_{k}+u_{(1)},\quad u_{(1)}\in\mco(k-1).$$
Applying the Jacobi operator, one finds
$$-\mce(z_{k})\equiv h_3^{\frac{1}{3}}\delxb(h_3^{\frac{(k-1)}{3}}u_{(1)}^{k-1})z_{k}\mod\mco(k-1).$$
By Lem.\ref{lem:mcez}, $\mce(z_{k})\in\mcq_{k-3}(k)$.
By Cor.\ref{cor:delbpoly}, one gets
$$ u_{(1)} = p_{(1)}+\mco(k-2)$$
for some $p_{(1)}\in \mcp_{k-3}(k-1).$

Suppose by induction we arrive at the formula
\be\label{eq:Az2kj}
A=z_{k}+p_{(1)}+p_{(2)}+\, ... \, +p_{(j)}+u_{(j+1)},\quad u_{(j+1)}\in\mco(k-(j+1)),
\ee
where each $p_{(i)}\in \mcp_{k-3}(k-i)$ such that
$$ q_{(i)}:=\mce\left(z_{k}+p_{(1)}+p_{(2)}+\, ... \, +p_{(i)}\right)\in\mco(k-i).
$$
By Lem.\ref{lem:mcez}, one finds $q_{(j)}\in\mcq_{k-3}(k-j)$.
Applying the Jacobi operator to the refined normal form \eqref{eq:Az2kj}, one gets
$$ -q_{(j)}\equiv h_3^{\frac{1}{3}}\delxb(h_3^{-\frac{k-(j+1)}{3}}u_{(j+1)}^{k-(j+1)})z_{k-j}\mod\mco(k-(j+1)).$$
By Cor.\ref{cor:delbpoly}, one may write
\begin{align*}
u_{(j+1)}&=p_{(j+1)}+u_{(j+2)}, \\
p_{(j+1)}&\in\mcp_{k-3}(k-(j+1)),\\
u_{(j+2)}&\in\mco(k-(j+2)).
\end{align*}

Continuing this process, we arrive at the normal form
\be\begin{array}{rll}
A&=p+u_{(k-3)}, &u_{(k-3)}\in\mco(3),\n  \\
p&=z_{k}+p_{(1)}+p_{(2)}+\, ... \, +p_{(k-4)},&
\end{array}
\ee
where $p_{(k-4)}\in \mcp_{k-3}(4)$ such that
$$ q_{(k-4)}:=\mce(p)\in\mcq_{k-3}(4).
$$

Since $\mce$ is a real operator, the complex conjugate of this argument implies that
the Jacobi field $A$ decomposes into
$$A =f+g,$$
where  $f$ is an un-mixed pure polynomial in $z_j, \zb_j$, and $g$ is a function on $\Xh{1}$.

Now, the Jacobi operator $\mce$ preserves the spectral weight.
Since $\mce(g)\in\mco(4)\cap\mco(-4)$ is at most linear in $z_4,\zb_4$, 
this implies  
(note $\mce(z_4)=R z_4$), 
\be\label{eq:efeg}
\mce(c'z_4+c''\zb_4)=R(c'z_4+c''\zb_4)=-\mce(g),
\ee
for some constants $c', c"$,
where $c'z_4+c''\zb_4$ denotes the terms of spectral wight $\pm 1$ in the polynomial $f$.

A short computation shows that this forces
$c', c"=0$, and $f\in\mcr\oplus\ol{\mcr}$ is a higher-order Jacobi field.
Hence, Eq.\eqref{eq:efeg} also implies that $\mce(g)=0$ and consequently,
$$h_3 E_3(g), \;\hb_3 E_{\ol{3}}(g)\equiv\tn{const}.$$
It is easily checked from this that $E_3(g), E_{\ol{3}}(g)=0$ and 
$g$ is necessarily a classical Jacobi field defined on $X$.

\two
Consider next the pseudo-Jacobi field case.  By the similar argument as above, 
a  pseudo-Jacobi field $P$ decomposes into
$$P=f+g,
$$
where  $f$ is an un-mixed pure polynomial in $z_j, \zb_j$, and $g$ is a function on $\Xh{1}$.
Since $\mce'(g)\in\mco(4)\cap\mco(-4)$ is at most linear in $z_4,\zb_4$ and $z_4,\zb_4$ are pseudo-Jacobi fields,
it follows that
$$\mce'(f)=0, \quad \tn{and hence }\; \mce'(g)=0.
$$
By the similar argument as above, this implies  that $g$ is necessarily a classical pseudo-Jacobi field.
\end{proof}
 
\subsection{Classification}\label{sec:classification}
Combining the results of \S\S\ref{sec:JacobiSymmetry}, \ref{sec:splitting}, we state a complete classification of (pseudo) Jacobi fields.

The classification consists of the following steps.
We first show, by a direct computation, that 
there exist no even order Jacobi fields in $\mcr\oplus\ol{\mcr}.$
This enables to apply the classification results for elliptic 
Tzitzeica equation in \cite{Fox2011,Fox2012}, and
we find that a nontrivial higher-order Jacobi field exists at order $5$, or $1$ mod $6$ only.
The infinite sequence of higher-order Jacobi fields of the admissible orders will be constructed in \S\ref{sec:formalKilling}.

The classification of pseudo-Jacobi fields follows from
\S\ref{sec:interpseudo}, Prop.\ref{prop:splitting}, and \cite{Fox2011,Fox2012}.
\begin{thm}\label{thm:classifyJacobi}
The infinitely prolonged differential system $(\xinfh,\iinfh)$ for minimal Lagrangian surfaces
in a $2_{\C}$-dimensional, non-flat, complex space form admits an infinite sequence of higher-order (pseudo) Jacobi fields as follows.

\begin{enumerate}[\qquad  a)]
\item
There exists a unique (up to constant scale) nontrivial weighted homogeneous polynomial Jacobi field in $\mcr$ of degree $d\geq 2$ for each 
$$d\equiv 2, \, 4\mod 6,$$
(hence of order $\equiv 5, \,1\mod 6$).
The classical Jacobi fields, 
and these higher-order Jacobi fields and their complex conjugates generate the space of Jacobi fields
$\mathfrak{J}^{(\infty)}.$
\item
There exists a unique (up to constant scale) nontrivial  weighted homogeneous polynomial pseudo-Jacobi field in $\mcr$ of degree $d\geq 2$ for each 
$$d\equiv 1, \, 5\mod 6,$$
(hence of order $\equiv 4, \,2\mod 6$).
The classical pseudo-Jacobi fields, 
and these higher-order pseudo-Jacobi fields and their complex conjugates generate 
the space of pseudo-Jacobi fields
$\mathfrak{J}'^{(\infty)}.$
\end{enumerate}
\end{thm}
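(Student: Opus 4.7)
The strategy combines three ingredients: the splitting theorem (Prop.~\ref{prop:splitting}), which reduces the problem to weighted homogeneous polynomials in $\mcr$; the interpretation of pseudo-Jacobi fields as symmetries of the elliptic Tzitzeica equation (\S\ref{sec:interpseudo}); and the classification of Tzitzeica symmetries due to Fox \cite{Fox2011,Fox2012}.

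By Prop.~\ref{prop:splitting}, every (pseudo) Jacobi field splits as the sum of a classical one (belonging to $\mfj^{(0)} \simeq \mfj'^{(0)} \simeq \g^\C$) and an un-mixed higher-order one in $\mcr \oplus \ol{\mcr}$. Since $\mce$ and $\mce'$ preserve the spectral weight on $\mcr$ (Lem.~\ref{lem:mcez}) and complex conjugation exchanges $\mcr$ with $\ol{\mcr}$, it suffices to classify weighted homogeneous polynomial (pseudo) Jacobi fields in $\mcr$ at each degree $d \geq 2$. For part (b), I would pass to the local conformal coordinate $z$ with $\ed z = h_3^{1/3}\xi$ of \S\ref{sec:interpseudo}, in which $\mce'$ becomes $e^{-u}$ times the linearization of the elliptic Tzitzeica equation \eqref{eq:Tzitzeica}. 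Weighted homogeneous polynomial pseudo-Jacobi fields of degree $d$ in $\mcr$ then correspond bijectively to polynomial generating functions of higher-order symmetries of this equation of the corresponding order, and Fox's classification yields precisely one such field, up to constant scale, for each $d \equiv 1, 5 \mod 6$ and none at the other degrees.

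For part (a), the first step is to show by a direct symbol-level computation --- modeled on the proof of Prop.~\ref{prop:splitting} and leveraging Lem.~\ref{lem:delbpoly0} together with Cor.~\ref{cor:delbpoly} --- that any weighted homogeneous polynomial Jacobi field in $\mcr$ must have odd order, i.e., its leading variable $z_j$ must satisfy $j$ odd (equivalently, the spectral weight $j-3$ is even). Once the weight is constrained to be even, I would use the 3-step recursion $a^* \to g^* \to (s^*, t^*) \to p^*$ of \S\ref{sec:recursionrel}, extended to the infinite prolongation in \S\ref{sec:formalKilling}, to associate to each such Jacobi field of weight $d$ a polynomial pseudo-Jacobi field of weight $d+3$ (necessarily odd). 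The constraint $d + 3 \equiv 1, 5 \mod 6$ from part (b) then forces $d \equiv 2, 4 \mod 6$, ruling out the remaining even weights $d \equiv 0 \mod 6$; uniqueness at each admissible weight follows by the same transfer.

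Existence at all admissible weights is supplied constructively by the inductive formulas for the pair of canonical formal Killing fields (Thm.~\ref{thm:FKformulaep4}, Thm.~\ref{thm:FKformulaea5}). The main obstacle is the control of the intermediate terms $b^*, c^*, s^*, t^*$ produced by $\delxb$-inversion in the recursion, which a priori could introduce spurious solutions at non-admissible weights. This is averted by imposing the determinantal constraint $\det(\mu \tn{I}_3 + \tb{X}_\lambda) = \mu^3 + c \lambda^3$ on the formal Killing field, which determines these terms algebraically from all of the lower-order data rather than by genuine integration, and thereby confines the whole construction to the polynomial ring $\mcr$.
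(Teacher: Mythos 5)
Your overall architecture matches the paper's: split off the classical part via Prop.~\ref{prop:splitting}, identify higher-order pseudo-Jacobi fields with symmetries of the elliptic Tzitzeica equation and quote the classification of \cite{Fox2011,Fox2012} for part (b), rule out the remaining degrees in part (a) by a parity argument plus a transfer to the pseudo-Jacobi side, and obtain existence from the canonical formal Killing fields. Part (b) is essentially the paper's proof. Part (a), however, has two genuine gaps. The first is the exclusion of even-order (odd-weight) Jacobi fields: this does \emph{not} follow from Lem.~\ref{lem:delbpoly0} and Cor.~\ref{cor:delbpoly}, which only place a Jacobi field in the polynomial ring and say nothing about which weights can occur. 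The paper devotes \S\ref{sec:noeven} to this step: one expands a putative even-order Jacobi field $F=\tfrac32 x_0 z_{2k}+\cdots$, computes $\mce(F)$ modulo the curvature $R$ to isolate the principal monomials $z_{j+3}z_{2k-j}$, and obtains a homogeneous system of $k-1$ three-term linear relations on $x_0,\dots,x_{k-2}$ whose entries are built from the combinatorial coefficients $a_{j,s}$ of the $T_j$-recursion; the conclusion rests on the nonvanishing of the determinant $\chi_k$ (equal to $\tfrac12$ or $1$ up to sign according to $k\bmod 3$), a nontrivial identity that your proposal treats as routine.

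The second gap is in your mechanism for ruling out $d\equiv 0\bmod 6$ and for uniqueness: pushing an \emph{arbitrary} weighted homogeneous Jacobi field of weight $d$ through the 3-step recursion $a^*\to g^*\to(s^*,t^*)\to p^*$ requires inverting $\delxb$ at the middle step. The determinantal constraint supplies that inversion only for the coefficients of the two canonical formal Killing fields with their specific initial data; for a Jacobi field not known in advance to sit inside such a field there is no a priori reason the $\delxb$-equation is solvable within $\mcr$, nor that the transfer is injective (which your uniqueness claim also needs). The paper instead passes to conservation laws via the symbol map $E^{0,1}_1\hookrightarrow E^{1,1}_1$ and invokes the recursion operators $\mcp,\mcn$ of \cite{Fox2012}, whose obstructions live in even-weight conservation laws and vanish by the Tzitzeica results; some substitute for that obstruction-vanishing argument is needed before your transfer step closes the case $d\equiv 0\bmod 6$.
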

 
We present the proof of the theorem in the following two subsections.
\subsubsection{No even order Jacobi fields}\label{sec:noeven}\;
Recall $\mce(\mcr)\subset\mcr\oplus(h_3\hb_3)\mcr.$
Let $F=\frac{3}{2}x_0 z_{2k}+\, ... \, \in\mcr$ be an even order weighted homogeneous polynomial 
Jacobi field of weight $2k-3$ (for a constant $\frac{3}{2}x_0$).
Consider the expansion,
\be\begin{array}{rlll}
F= \frac{3}{2}x_0z_{2k}&+z_{2k-1} (&x_1z_4 &)     \\
                 &+z_{2k-2} (&x_2z_5    &+y_2z_4^2)   \\
                 &+z_{2k-3} (&x_3z_6  &+y_3z_5z_4 +\, ... \, )   \\
                 &+z_{2k-4} (&x_4z_7  &+y_4z_6z_4 +\, ... \, )   \\
& ... && \\
                 &+z_{2k-(i-1)} (&x_{i-1}z_{i+2} &+y_{i-1}z_{i+1}z_4 +\, ... \, )   \\
                 &+z_{2k-i}        (&x_i z_{i+3}      &+y_iz_{i+2}z_4 +\, ... \, )   \\
                 &+z_{2k-(i+1)} (&x_{i+1}z_{i+4} &+y_{i+1}z_{i+3}z_4 +\, ... \, )   \\
&...&& \\
                 &+z_{k+3} (&x_{k-3}z_k &+y_{k-3}z_{k-1}z_4 +\, ... \, )   \\
                 &+z_{k+2} (&x_{k-2}z_{k+1} &+y_{k-2}z_k z_4 +\, ... \, )   \\
                 &&&+y_{k-1} z_{k+1}^2z_4  +\, ... \, .
\end{array}
\ee
Here $\{ x_i, y_i\}$ are constant coefficients.

First, by considering the $z_{2k}$ term in $\mce(F)$, we get
\be\label{eq:x10}
 x_1+(a_{2k,0}+\frac{3}{2} -\frac{2k}{3}a_{3,0} )x_0=0.
\ee
 
In order to extract the compatibility equations imposed on the $x_i$-coefficients only, from now on we compute modulo the curvature $R=\gamma^2-2 h_3\hb_3$. 
It means that we identify $$h_3\hb_3\equiv\frac{\gamma^2}{2}.$$

Consider the Jacobi equation $$\mce(F)\mod R,$$ and 
collect the equations from the coefficients of the monomials $z_{2k}, z_{2k-1}z_4,  z_{2k-2}z_5, \, ... $
up to $z_{k+2} z_{k+1} .$
i.e. the terms with the coefficient $x_j$'s (let's call them the principal terms). 
It is easily checked that when acted on by the Jacobi operator,
the terms not appearing in the above expansion do not have any contribution
to the principal terms.
Also, since $\del_{\omega}z_4\equiv 0\mod R$, by computing mod $R$ we eliminate the contributions from the $y_j$ coefficients, as claimed above. It follows that one may evaluate 
$$\mce(\{\tn{principal terms}\})\mod R,$$
and check only the principal terms in the image. 
This would yield a set of linear equations among the coefficients $x_j$'s.

Recall the following  formulas:
\begin{align*}
T_{j+1}&=   \sum_{s=0}^{j-3}
a_{j,s} \,  h_{j-s} \, \partial^{s}_{\xi} R,    \; \; \; \mbox{for} \; \; j\geq 3,  \n   \\
& \quad  \;\;a_{j,s} =\frac{(j+2s+3) }{2(j-1)} {j-1 \choose s+2}, \n\\
& \quad  \partial^{s}_{\xi}  R=\delta_{0s}\gamma^2 - 2 h_{3+s} \hb_3, \n\\
\ed z_j&\equiv \left(z_{j+1}-\frac{j}{3} z_4 z_j \right) \omega
+ \hat{T}_{j} \tn{r}^{-\frac{2}{3}}\omb \;\mod \iinfh, \quad{\rm for} \; j \geq 4.\n
\end{align*}
Note that, for $j\geq 4$,
\begin{align}
\delxb\delx(z_j)&\equiv\hat{T}_{j+1}-\frac{j}{3}(  z_4 \hat{T}_j)  \mod R, \n\\
\hat{T}_{j+1}&\equiv-\gamma^2 a_{j,j-3} z_j  +\tn{ (quadratic terms in $z_i$'s) }\mod R.\n
\end{align}
Since the principal terms except $z_{2k}$ are quadratic in the balanced coordinates $z_j$'s, 
the term $-\gamma^2 a_{j,j-3} z_j $  is the only form of contribution to the principal terms 
 from $\delxb z_{j+1}$ when $j+1<2k$.

\two
With this preparation, a direct computation yields the  following formulas for the principal terms.
We record only the relevant terms (here we set the scaling factor
 $\gamma^2=1$ temporarily for simplicity).
\begin{align*}
-\mce(z_{2k})&\equiv  ( a_{2k,2k-3}  -\frac{3}{2}) \,  z_{2k}
                        + ( a_{2k,2k-4} +  a_{2k,1}   - \frac{2k}{3}a_{2k-1,2k-4}) \,z_4 z_{2k-1}       \\
&\qquad+\sum_{j=2}^{k-2}   ( a_{2k,2k-j-3} +a_{2k,j}) \, z_{j+3} z_{2k-j} ,\\  
-\mce(z_4z_{2k-1})&\equiv
\begin{cases}
&\quad\cdot \\
&(a_{4,1}+a_{2k-1,2k-4}-\frac{3}{2}) \,z_{4}z_{2k-1}\\
&(a_{2k-2,2k-5}) \,z_{5}z_{2k-2}
\end{cases},  \\          
&... \\
-\mce(z_{j+3}z_{2k-j})&\equiv
\begin{cases}
&( a_{j+2,j-1} ) \,z_{j+2}z_{2k-(j-1)}  \\
&(a_{j+3,j}+a_{2k-j,2k-(j+3)}-\frac{3}{2}) \,z_{j+3}z_{2k-j}\\
&(a_{2k-(j+1),2k-(j+4)}) \,z_{j+4}z_{2k-(j+1)}
\end{cases}, \\
&... \\
-\mce(z_{k+1}z_{k+2})&\equiv 
\begin{cases}
& (a_{k,k-3}) \,z_{k}z_{k+3}  \\
& (2 a_{k+1,k-2}+a_{k+2,k-1}-\frac{3}{2}) \,z_{k+1}z_{k+2} \\
&\quad\cdot \\
\end{cases}, \mod R.  
\end{align*}
Note $a_{j+3,j}=\frac{3}{2}$ for all $j\geq0.$
Hence, except for the terms from $\mce(z_{2k})$ and the last term $z_{k+1}z_{k+2}$,
all the terms have the equal coefficient $\frac{3}{2}$.

The resulting set of linear equations on the coefficients $x_j$'s  
are the following system of three term relations, including the initial equation  \eqref{eq:x10}.
\be\begin{array}{rrrl}
\cdot&\cdot& x_1 
&+ (a_{2k,2k-3}+a_{2k,0} -k )x_0=0, \\
 \cdot &x_1&+x_2
&+  (a_{2k,2k-4} +  a_{2k,1}   - k )x_0=0, \\
x_1 &+x_2&+x_3
&+  (a_{2k,2k-5} +  a_{2k,2})x_0=0, \\
&&&...\\
 x_{j-1}&+x_j&+x_{j+1}&+ (a_{2k,2k-j-3} +a_{2k,j})x_0=0,\\
&&&...
\\
 x_{k-4}&+x_{k-3}&+x_{k-2}
&+ (a_{2k,k} +a_{2k,k-3})x_0=0,\\
 x_{k-3}&+x_{k-2}&+x_{k-2}
&+  (a_{2k,k-1} +a_{2k,k-2})x_0=0.
\end{array}\ee

\one
It is left to show  that this system of $k-1$ linear equations on the set of $k-1$ coefficient 
$\{ x_0, x_1, \, ... \, x_{k-2}\}$ has full rank. 
Set $t_j$ be the coefficient of $x_0$ of the $j$-th equation above, i.e.,
\begin{align*}
t_0&=a_{2k,2k-3}+a_{2k,0} -k , \n \\
t_1&=a_{2k,2k-4} +  a_{2k,1}   - k, \n \\
t_j&=a_{2k,2k-j-3} +a_{2k,j}, \quad \tn{for}  \;j\geq 2.\n
\end{align*}
A direct computation shows that the determinant $\chi_k$ of the $(k-1)$-by-$(k-1)$ matrix for the set of linear equations is given by
$$\pm \chi_k =\sum_{j=0}^{k-2}  \epsilon^j_k t_j,
$$
where
$$\epsilon^j_k=
\begin{cases}
&-2\;\;\tn{when}\; j\equiv k \mod 3 \\
&+1\;\;\tn{otherwise}.
\end{cases}$$
A  computation mod 3 shows that
\be\pm \chi_k =
\begin{cases}
& \frac{1}{2}\;\;\tn{when}\; k\equiv 0 \mod 3 \\
&1\;\;\tn{otherwise}.
\end{cases}
\ee

\subsubsection{Proof of Thm.\ref{thm:classifyJacobi}}\label{sec:proof}
$\,$

\quad b)\;
By Prop.\ref{prop:splitting} and the analysis in \S\ref{sec:interpseudo},
the higher-order pseudo-Jacobi fields correspond to the `Jacobi fields'
for the elliptic Tzitzeica equation studied in \cite{Fox2011,Fox2012}.
It follows from the classification result in \cite[Theorem 8.1]{Fox2012}.

\quad a)\;
From the analysis above, there exist no even order Jacobi fields.

We remark, without giving the proofs, that
the higher-order analogues of the results in \S\S\ref{sec:classicalcvlaw}, \ref{sec:cdcvlaw} are true:
the space of (differentiated) conservation laws injects into the space of Jacobi fields
under the natural \emph{symbol map} given by 
the differential of the associated spectral sequence; 
see \S\ref{sec:highercvlaws1}.
Thus, Prop.\ref{prop:splitting} also implies 
the corresponding splitting theorem for the higher-order conservation laws.

In particular,  
a higher-order conservation law of the minimal Lagrangian system
corresponds  to a  higher-order conservation law  of 
the  elliptic Tzitzeica equation.
From this, the induction argument using 
the recursion operators $\mcp, \mcn$ in \cite{Fox2012} 
shows that
there are no Jacobi fields (for the  minimal Lagrangian system) 
of degree $0\mod 6$ (or  of order $3\mod 6$).\ftmark\fttext{The obstruction 
to the application of the recursion operators $\mcp, \mcn$ lies
in the space of higher-order conservation laws of even weight.
This vanishes by the results from \cite{Fox2011,Fox2012}.}

The sequence of Jacobi fields of the given degree $d\equiv 2, 4\mod 6$ will be constructed in \S\ref{sec:formalKilling}.
\hfill$\square$

\section{Formal Killing fields}\label{sec:formalKilling}
Recall that the original differential system for minimal Lagrangian surfaces
is defined on the bundle of Lagrangian planes $X\to M$.
It is a 6-symmetric space associated with the Lie group $\SL(3,\C)$,
and the minimal Lagrangian surfaces arise as the primitive harmonic maps.
From the theory of integrable systems, this implies that
the $\g$-valued Maurer-Cartan form $\psi$, \eqref{eq:psiform},
admits an extension to the  
$\g^{\C}[\lambda^{-1},\lambda]$-valued ($\g^{\C}=\sla(3,\C)$) 
extended Maurer-Cartan form $\psi_{\lambda}$, \eqref{eq:primitiveform} in the below, 
by inserting the spectral parameter $\lambda.$
The structure equation for the minimal Lagrangian system shows that
$\psi_{\lambda}$ is compatible and satisfies the Maurer-Cartan equation.

In this section, 
we give a construction of the corresponding $\g^{\C}[[\lambda]]$-valued 
canonical formal Killing fields  
to utilize this aspect of symmetry of the minimal Lagrangian system.
The construction relies on the  pair of 3-step recursions between Jacobi fields and pseudo-Jacobi fields
which are embedded in the structure equation for the formal Killing fields.
We give the differential algebraic   inductive  formulas
for the pair of formal Killing fields
that correspond to two particular sets of  initial data.
As a consequence, 
we will be able to read off the infinite sequence of higher-order (pseudo) Jacobi fields 
and conservation laws
from the components of the formal Killing fields.
 
In hindsight, the recursion relations were anticipated from the structure equation 
for the classical Killing field \eqref{eq:classicalKF}.
Note that the 6-step recursion introduced in \cite{Fox2012}
is the union of these two 3-step recursions  
when translated to our setting.

\two
In \S\ref{sec:KFstrt}, 
we record the structure equation for the formal Killing fields
with respect to the extended Maurer-Cartan form.
In \S\ref{sec:KFinitial}, 
we determine the first few terms of the formal Killing fields 
for the two initial ans\"atze given by Exam.\ref{exam:initialdata}.
The relevant observation is that the coefficient $h_3$ of Hopf differential 
provides the lower-end terms for the formal Killing fields,
which allow one to truncate the  terms of negative $\lambda$-degrees.
With this preparation, 
we give in \S\ref{sec:KFformulae} the inductive formula
for the formal Killing field for each set of the initial data.

\subsection{Structure equation}\label{sec:KFstrt}
In this section, we introduce the extended Maurer-Cartan form $\psi_{\lambda}$
and record the recursive structure equation (mod $\iinfh$) 
for the coefficients of the $\g^{\C}[[\lambda]]$-valued formal Killing field
with respect to $\psi_{\lambda}$.
\subb{Extended Maurer-Cartan form}
Consider the $\g$-valued 1-form $\psi$, Eq.\eqref{eq:psiform}.
Evaluating mod $\iinfh$ (i.e., $\eta_2=h_3\xi, \etab_2=\hb_3\xib$), it becomes
\[\psi=\psi_{+}+\psi_{0}+\psi_{-},
\]
where
\be
\psi_{-} =\frac{1}{2} \left[ \begin {array}{ccc} \cdot &- \gamma
&\im \gamma  \n \\
\noalign{\medskip}  \gamma &\im h_{{3}} & -  h_{{3}}  \\
\noalign{\medskip} -\im \gamma &- h_{{3}} &- \im h_{{3}} \end {array} \right]\xi,
\ee
\be \psi_{0} =
\left[ \begin {array}{ccc} \cdot &\cdot & \cdot  \n \\
\cdot &\cdot& \rho   \\
\cdot  & -\rho &\cdot
\end {array} \right], \n
\ee
\be \psi_{+} =\frac{1}{2}  \left[ \begin {array}{ccc} 0&- \gamma &-\im \gamma \\
\noalign{\medskip}  \gamma & \im \hb_3 & \hb_3 \\
\noalign{\medskip}\im \gamma &\hb_3 &-\im \hb_3
\end {array} \right]\xib. \n
\ee

\one 
Let $\lambda\in\C^*$ be the auxiliary spectral parameter. 
\begin{defn}
The \tb{extended Maurer-Cartan form} is the 
$\g^{\C}[\lambda^{-1},\lambda]$-valued  1-form on $\Fh{\infty}$ given by
\be\label{eq:primitiveform}
\psi_{\lambda}:=\lambda \psi_{+}+\psi_0+\lambda^{-1} \psi_-.
\ee
\end{defn}
The extended 1-form $\psi_{\lambda}$ takes values in the Lie algebra $\g$ 
when $\lambda$ is a unit complex number. 
It  satisfies the structure equation
\be\label{eq:primitiveMC}
\ed\psi_{\lambda}+\psi_{\lambda}\w\psi_{\lambda}\equiv 0\mod\iinfh.
\ee
 
\subb{Formal Killing field}
Recall the decomposition of the Lie algebra $\g^{\C}$ given in Fig.\ref{fig:Xdecompo0}.
By expanding each of the scalar coefficients
 $\{ \, \tb{p}, \tb{b}, \tb{c}, \tb{f}, \tb{a}, \tb{g}, \tb{s}, \tb{t}   \,\}$ 
as a series in $\C[[\lambda]]$,
we give an abridged definition of the formal Killing fields 
associated with the extended Maurer-Cartan form.
\begin{defn}\label{defn:FK}
Let $\psi_{\lambda}$, \eqref{eq:primitiveform}, be 
the extended Maurer-Cartan form.
A \tb{formal Killing field}   is a function\ftmark\fttext{We will find that the canonical formal Killing fields to be constructed  are defined on the open subset  
$\Fh{\infty}_{**}=\{ h_3\ne 0, \infty, \; h_j\ne\infty\;\forall j\geq 4\}\subset\Fh{\infty}_{*}.$}
\be 
\tb{X}_{\lambda}: \Fh{\infty} \to\g^{\C}[[ \lambda]],\n
\ee
such that;
\begin{enumerate}[\qquad a)]
\item
it satisfies the Killing field equation
\be\label{eq:KillingEquation}
\ed \tb{X}_{\lambda}+[\psi_{\lambda}, \tb{X}_{\lambda}]\equiv 0\mod\iinfh,
\ee
\item
its components are the formal series in $ \lambda$ given explicitly by
\be\label{eq:KFcomponents}
\begin{array}{rlrl}
\tb{p}&=\sum p^{6k+4}\lambda^{6k+2},
& \tb{a}&= \sum a^{6k+7}\lambda^{6k+5},        \\
\tb{b}&=\sum b^{6k+5}\lambda^{6k+3},
& \tb{g}&= \sum g^{6k+2}\lambda^{6k},   \\
\tb{c}&=\sum c^{6k+5}\lambda^{6k+3},
&\tb{s}&=\sum  s^{6k+3}\lambda^{6k+1},   \\
\tb{f}&=\sum f^{6k+6}\lambda^{6k+4},
&\tb{t}&= \sum t^{6k+3}\lambda^{6k+1}.
\end{array}
\ee
Here the sums are over the integer index $k$  from $0$ to $\infty$.
\end{enumerate}
\end{defn}

\subb{Recursive structure equation}
When the Killing field equation \eqref{eq:KillingEquation} is expanded as a series in $\lambda$,
it implies the following recursive structure equation.

\one
[\tb{$n$-th equation}]
\begin{align}\label{eq:formalKilling_n}
 \ed  p^{6n+4} &=(\im\gamma b^{6n+5}+2\im h_3 c^{6n+5}  )\xi
                          + ( \im\gamma s^{6n+3}+2\im \hb_3 t^{6n+3})\xib,     \\
 \ed  b^{6n+5} +\im b^{6n+5}\rho
                         &=\im h_3 f^{6n+6} \xi+ \frac{\im}{2}\gamma p^{6n+4} \xib,    \n\\
 \ed  c^{6n+5} -2 \im c ^{6n+5}\rho&= \im \gamma f^{6n+6}\xi  +\im \hb_3 p^{6n+4}  \xib,    \n\\
 \ed  f^{6n+6} -\im f^{6n+6}\rho  &=\frac{3\im}{2}\gamma a^{6n+7} \xi
                                                          +(\im \gamma  c^{6n+5}+\im\hb_3b^{6n+5})\xib, \n\\
 \ed  a^{6n+7}  &=\im \gamma g^{6n+8}\xi + \im \gamma f^{6n+6}\xib,  \n\\
 \ed  g^{6n+8}+\im g^{6n+8}\rho &= (-\im \gamma t^{6n+9}-\im h_3 s^{6n+9})\xi
                                                             +\frac{3\im}{2}\gamma a^{6n+7}\xib,   \n\\
 \ed  s^{6n+9} -\im s^{6n+9}\rho&= \frac{\im}{2} \gamma p^{6n+10}\xi -\im\hb_3 g^{6n+8}\xib,  \n\\
 \ed  t^{6n+9}+2 \im t^{6n+9}\rho&=\im h_3 p^{6n+10}\xi-\im \gamma g^{6n+8}\xib,
\qquad\qquad  \quad (\tn{mod}\;\iinfh).  \n
\end{align}

\two
The relevance of this formal structure equation for the analysis of the minimal Lagrangian system
lies in the following observation.
\begin{lem}\label{lem:formalJacobi}
Suppose the coefficients
$\{p^{6n+4}, b^{6n+5},\,c^{6n+5}, f^{6n+6}, a^{6n+7},
 g^{6n+8}, s^{6n+9}, t^{6n+9} \}$
satisfy the recursive  structure equation \eqref{eq:formalKilling_n}. Then,
\begin{enumerate}[\qquad a)]
\item  $p^{6n+4}$ is a pseudo-Jacobi field.
\item  $a^{6n+7}$ is a Jacobi field.
\end{enumerate}
\end{lem}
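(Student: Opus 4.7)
The plan is to verify both claims by direct computation straight from the recursive structure equation \eqref{eq:formalKilling_n}, reading off the covariant derivatives $\delx, \delxb$ as the $\xi$- and $\xib$-coefficients modulo $\iinfh$. The key observation is that \eqref{eq:formalKilling_n} expresses $\delx$ and $\delxb$ of every coefficient in terms of its neighbours in the recursion diagram of Fig.\ref{fig:diagram0}, so composing two consecutive structure equations is a purely algebraic manipulation that closes up onto $p^{6n+4}$ or $a^{6n+7}$ with precisely the right coefficient.

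For the Jacobi claim, I would start from the fifth line of \eqref{eq:formalKilling_n} to read
\[
\delxb a^{6n+7} = \im\gamma\, f^{6n+6},
\]
and then from the fourth line obtain
\[
\delx f^{6n+6} = \tfrac{3\im}{2}\gamma\, a^{6n+7}.
\]
Composing these, and using that $\delx$, $\delxb$ commute with scalar multiplication by $\gamma$ (a constant), yields
\[
\delx\delxb a^{6n+7} = \im\gamma\cdot\tfrac{3\im}{2}\gamma\, a^{6n+7} = -\tfrac{3}{2}\gamma^{2}\, a^{6n+7},
\]
which is precisely the Jacobi equation \eqref{eq:Jacobieq}, so $\mce(a^{6n+7})=0$.

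For the pseudo-Jacobi claim, the first line of \eqref{eq:formalKilling_n} gives
\[
\delxb p^{6n+4} = \im\gamma\, s^{6n+3} + 2\im\hb_3\, t^{6n+3},
\]
where $s^{6n+3}, t^{6n+3}$ belong to the $(n-1)$-th block. Applying $\delx$ and using the last two lines of the $(n-1)$-th instance of \eqref{eq:formalKilling_n}, namely $\delx s^{6n+3} = \tfrac{\im}{2}\gamma\, p^{6n+4}$ and $\delx t^{6n+3} = \im h_3\, p^{6n+4}$, together with $\delx\hb_3 = 0$ (from the prolongation structure equation, $\hb_3$ is $\delxb$-holomorphic modulo $\iinfh$), I obtain
\[
\delx\delxb p^{6n+4} = -\tfrac{\gamma^{2}}{2}\, p^{6n+4} - 2h_{3}\hb_{3}\, p^{6n+4} = -\tfrac{1}{2}(\gamma^{2}+4h_{3}\hb_{3})\, p^{6n+4},
\]
which is \eqref{eq:Jacobieq'}, so $\mce'(p^{6n+4})=0$.

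There is no real obstacle here: both identities are forced by a two-step composition in the diagram. The only point demanding a moment's care is the use of $\delx\hb_3 = 0$ (and the symmetric fact $\delxb h_3 = 0$ used implicitly when the $\xib$-side is invoked), which comes from the initial prolongation relation \eqref{eq:dhj}; this guarantees that no spurious terms are produced by $\delx$ falling on the coefficient $\hb_3$ appearing in $\delxb p^{6n+4}$. Everything else is the tautology that \eqref{eq:formalKilling_n} is exactly the component expansion of the Killing field equation \eqref{eq:KillingEquation}, restricted to the $\tb{a}$- and $\tb{p}$-entries in the decomposition of Fig.\ref{fig:Xdecompo0}.
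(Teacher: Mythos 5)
Your proof is correct and is essentially the paper's argument: the lemma is the direct two-step composition of adjacent lines of \eqref{eq:formalKilling_n}, exactly mirroring the classical computation for $\tb{a}$ and $\tb{p}$ carried out in \S\ref{sec:classicalJacobi}. You also correctly flag the only delicate points — that $\delx s^{6n+3},\delx t^{6n+3}$ come from the $(n-1)$-st block and that $\delx\hb_3=0$ by \eqref{eq:dhj} — so nothing is missing.
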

The lemma   indicates that
one may obtain a canonical sequence of (pseudo) Jacobi fields 
by \emph{solving} the structure equation \eqref{eq:formalKilling_n}.

\subsection{Initial analysis}\label{sec:KFinitial}
Recall from Exam.\ref{exam:initialdata} that 
$z_4$ is a pseudo-Jacobi field, and $z_5-\frac{5}{3}z_4^2$ is a Jacobi field.
We start the process of solving for the canonical formal Killing fields
by determining the first few terms generated by these initial data.
By Lem.\ref{lem:delbpoly0}, the coefficients of the resulting formal Killing fields  are 
the elements in the polynomial ring $\C[z_4, z_5, \, ... \, ]$, 
up to scaling by the appropriate powers of $h_3^{\frac{1}{3}}$. 

It turns out that 
the pair of formal Killing fields generated by $z_4$, and $z_5-\frac{5}{3}z_4^2$
are sufficient to cover all of the infinite sequence of higher-order (pseudo) Jacobi fields.

\subsubsection{Case $p^4=z_4$}\label{sec:521}
Set $g^2=0.$ By inspection, set
$$s^3=-\frac{3\im}{2}\gamma h_3^{-\frac{1}{3}},  \quad t^3= \frac{3\im}{2}h_3^{\frac{2}{3}}.$$
Differentiating this, we get $$p^4=z_4$$ as expected.

Solving the equation $\delxb b^5= \frac{\im}{2}\gamma z_4$, we get
$$b^5= -\frac{\im}{3\gamma} h_3^{\frac{1}{3}} (z_5 -\frac{5}{3}z_4^2).
$$
From the equation $\delx p^4= \im\gamma b^{5}+2\im h_3 c^{5}$, this implies
$$c^5=-\frac{\im}{3}h_3^{-\frac{2}{3}}(z_5 -\frac{7}{6}z_4^2).
$$

Successive derivatives of $b^5$ give 
\begin{align}
f^6&= -\frac{1}{3\gamma} h_3^{-\frac{1}{3}} (z_6-\frac{14}{3}z_5 z_4+\frac{35}{9}z_4^3),           \n\\
a^7&= \frac{2\im}{9\gamma^2}
(z_7-7 z_6 z_4  -\frac{14}{3} z_5^2+\frac{245}{9}z_5 z_4^2  - \frac{455}{27}z_4^4),       \n\\
g^8&= \frac{2}{9\gamma^3} h_3^{\frac{1}{3}} ( z_{{8}}-{\frac {28}{3}}\,z_{{7}}z_{{4}}-{\frac {49}{3}}\,z_{{6}}z_{{5}}
+{\frac {455}{9}}\,z_{{6}} z_{{4}}^{2}
+70 z_{{5}}^{2}z_{{4}}-{\frac {5005}{27}}z_{{5}} z_{{4}}^{3} +{\frac {7280}{81}}\, z_{{4}}^{5}). \n
\end{align}

Proceed with the similar computation  as above by solving (by inspection) 
the associated $\delxb$-equation,
and we get
\begin{align}
s^9&=\frac{2\im}{27\gamma^3}h_3^{-\frac{1}{3}}\Big(
z_{{9}}  -11z_{{8}}z_{{4}}-{\frac {79}{3}}z_{{7}}z_{{5}} +{\frac {689}{9}}z_{{7}}z_{{4}}^{2}
-16z_{{6}}^{2} +286z_{{6}}z_{{5}}z_{{4}} -{\frac{3380}{9}}z_{{6}}z_{{4}}^{3}      \n\\
&\qquad\qquad\;\; +{\frac {1976}{27}}z_{{5}}^{3}
 -{\frac {22360}{27}}z_{{5}}^{2}z_{{4}}^{2}+{\frac {108680}{81}}z_{{5}}z_{{4}}^{4}
-{\frac {380380}{729}}z_{{4}}^{6}
\Big), \n \\
t^9&=\frac{4\im}{27\gamma^4}h_3^{\frac{2}{3}}
\Big(z_{{9}}-12 z_{{8}}z_{{4}}-{\frac {76}{3}}z_{{7}}z_{{5}}+{\frac {758}{9}}z_{{7}}z_{{4}}^{2}
-{\frac {33}{2}}z_{{6}}^{2}+{\frac {901}{3}}z_{{6}}z_{{5}}z_{{4}}
-{\frac {3770}{9}}z_{{6}}z_{{4}}^{3}\n\\
&
\qquad\qquad\;\; +{\frac {1847}{27}}z_{{5}}^{3}-{\frac {47255}{54}} z_{{5}}^{2}z_{{4}}^{2}
+{\frac {120380}{81}}z_{{5}}z_{{4}}^{4}-{\frac {432250}{729}}z_{{4}}^{6}
\Big),             \n   \\
p^{10}&=\frac{4}{27\gamma^4}\Big(
z_{{10}} -{\frac {43}{3}}z_{{9}}z_{{4}} -{\frac {112}{3}}z_{{8}}z_{{5}}+{\frac {1118}{9}}z_8 z_{{4}}^{2}
-{\frac {175}{3}}z_{{7}}z_{{6}}+{\frac {4979}{9}}z_{{7}}z_{{5}}z_{{4}}-{\frac {21164}{27}}z_{{7}}z_{{4}}^{3} \n\\
&
\qquad\quad\;\;
+{\frac {1066}{3}}z_{{6}}^{2}z_{{4}} +{\frac {4550}{9}}z_{{6}}z_{{5}}^{2}
-{\frac {116324}{27}}z_{{6}}z_{{5}}z_{{4}}^{2} +{\frac {301340}{81}}z_{{6}}z_{{4}}^{4}  \n\\
&
\qquad\quad\;\; -{\frac {165776}{81}}z_{{5}}^{3}z_{{4}}
+{\frac {286520}{27}}z_{{5}}^{2}z_{{4}}^{3}
-{\frac {3151720}{243}}z_{{5}}z_{{4}}^{5}
+{\frac {9509500}{2187}}z_{{4}}^{7} \Big).
\n
\end{align}
By Lem.\ref{lem:formalJacobi}, 
$a^7$ is a Jacobi field, and $p^{10}$ is a pseudo-Jacobi field.

\subsubsection{Case $a^5=z_5-\frac{5}{3}z_4^2$}\label{sec:522}
For the formal Killing field generated by the Jacobi field $z_5-\frac{5}{3}z_4^2$, it is convenient
to lower the upper indices of the formal Killing field coefficients by 2 
to  match the  order.
The resulting structure equation is recorded as follows.
\be\label{eq:KFcomponents'}
\begin{array}{rlrl}
\tb{p}&=\sum p^{6k+2}\lambda^{6k}, & \tb{a}&= \sum a^{6k+5}\lambda^{6k+3},        \\
\tb{b}&=\sum b^{6k+3}\lambda^{6k+1}, & \tb{g}&= \sum g^{6k+6}\lambda^{6k+4},   \\
\tb{c}&=\sum c^{6k+3}\lambda^{6k+1}, &\tb{s}&=\sum  s^{6k+7}\lambda^{6k+5},   \\
\tb{f}&=\sum f^{6k+4}\lambda^{6k+2},   &\tb{t}&= \sum t^{6k+7}\lambda^{6k+5}.
\end{array}
\ee

[\tb{$n$-th equation}']
\begin{align}\label{eq:formalKilling_n'}
 \ed  p^{6n+2} &=(\im\gamma b^{6n+3}+2\im h_3 c^{6n+3}  )\xi
                          + ( \im\gamma s^{6n+1}+2\im \hb_3 t^{6n+1})\xib,     \\
 \ed  b^{6n+3} +\im b^{6n+3}\rho
                         &=\im h_3 f^{6n+4} \xi+ \frac{\im}{2}\gamma p^{6n+2} \xib,    \n\\
 \ed  c^{6n+3} -2 \im c ^{6n+3}\rho&= \im \gamma f^{6n+4}\xi  +\im \hb_3 p^{6n+2}  \xib,    \n\\
 \ed  f^{6n+4} -\im f^{6n+4}\rho  &=\frac{3\im}{2}\gamma a^{6n+5} \xi
                                                          +(\im \gamma  c^{6n+3}+\im\hb_3b^{6n+3})\xib, \n\\
 \ed  a^{6n+5}  &=\im \gamma g^{6n+6}\xi + \im \gamma f^{6n+4}\xib,  \n\\
 \ed  g^{6n+6}+\im g^{6n+6}\rho &= (-\im \gamma t^{6n+7}-\im h_3 s^{6n+7})\xi
                                                             +\frac{3\im}{2}\gamma a^{6n+5}\xib,   \n\\
 \ed  s^{6n+7} -\im s^{6n+7}\rho&= \frac{\im}{2} \gamma p^{6n+8}\xi -\im\hb_3 g^{6n+6}\xib,  \n\\
 \ed  t^{6n+7}+2 \im t^{6n+7}\rho&=\im h_3 p^{6n+8}\xi-\im \gamma g^{6n+6}\xib,
\qquad\qquad  \quad (\tn{mod}\;\iinfh).  \n
\end{align}

We proceed to solve for the first few terms.

Let $p^2=0.$ By inspection, set
$$b^3=-\frac{9}{2}\gamma h_3^{\frac{1}{3}}, \quad c^3= \frac{9}{4}\gamma^2 h_3^{-\frac{2}{3}}.$$
Differentiating these equations successively, one gets
$$f^4=\frac{3\im}{2}\gamma h_3^{-\frac{1}{3}}z_4,\quad
a^5=z_5-\frac{5}{3}z_4^2,\quad
g^6 =-\frac{\im}{\gamma}   h_3^{\frac{1}{3}}
\left( z_6  -5 z_5z_4+\frac{40}{9} z_4^3 \right).$$
Note that $a^5$ is as expected.

A similar computation as in the previous case yields,
\begin{align}
s^7&=\frac{1}{3\gamma}h_3^{-\frac{1}{3}}
\Big( z_7 -6z_6z_4-\frac{16}{3} z_5^2+\frac{220}{9} z_5z_4^2-\frac{385}{27} z_4^4 \Big), \n \\
t^7&=\frac{2}{3\gamma^2}h_3^{\frac{2}{3}}\Big( z_{{7}}-7z_{{6}}z_{{4}}-{\frac {29}{6}}z_{{5}}^{2}
          +{\frac {250}{9}}z_{{5}}z_{{4}}^{2}  -{\frac {935}{54}}z_{{4}}^{4} \Big), \n   \\ 
p^8&=-\frac{2\im}{3\gamma^2}\Big(z_{{8}}-{\frac {26}{3}}z_{{7}}z_{{4}}
 -{\frac {50}{3}}z_{{6}}z_{{5}}+{\frac {418}{9}}z_{{6}}z_{{4}}^{2}+{\frac {616}{9}}z_{{5}}^{2}z_{{4}}
 -{\frac {1540}{9}}z_{{5}}z_{{4}}^{3}+{\frac {6545}{81}}z_{{4}}^{5} \Big),\n\\
b^9&=-\frac{2}{9\gamma^3}h_3^{\frac{1}{3}} \Big(
z_{{9}}-12z_{{8}}z_{{4}} -{\frac {74}{3}}z_{{7}}z_{{5}}+{\frac {748}{9}}z_{{7}}z_{{4}}^{2}
-17z_{{6}}^{2}+{\frac {902}{3}}z_{{6}}z_{{5}}z_{{4}}
-{\frac {3740}{9}}z_{{6}}z_{{4}}^{3} \n\\
&\qquad\qquad\;\;\;
+{\frac {1760}{27}}z_{{5}}^{3}
-{\frac {23320}{27}}z_{{5}}^{2}z_{{4}}^{2}
+{\frac {118745}{81}}z_{{5}}z_{{4}}^{4}
-{\frac {425425}{729}}z_{{4}}^{6}\Big),   \n   \\ 
c^9&=-\frac{2}{9\gamma^2}h_3^{-\frac{2}{3}} \Big(
z_{{9}}-11z_{{8}}z_{{4}}-{\frac {77}{3}}z_{{7}}z_{{5}}+{\frac {682}{9}}z_{{7}}z_{{4}}^{2}
-{\frac {33}{2}}z_{{6}}^{2}      +286z_{{6}}z_{{5}}z_{{4}}
-374z_{{6}}z_{{4}}^{3}  \n \\
&\qquad\qquad\;
+{\frac {1892}{27}}z_{{5}}^{3}
-{\frac {22066}{27}} z_{{5}}^{2}z_{{4}}^{2}
+{\frac {107525}{81}}z_{{5}}z_{{4}}^{4}
-{\frac {752675}{1458}}z_{{4}}^{6}\Big).\n 
\end{align} 
Successively differentiating $b^9, c^9$, one finally gets,
\begin{align*}
f^{10}&=\frac{2\im}{9\gamma^3}h_3^{-\frac{1}{3}}\Big(
z_{10}-{\frac {44}{3}}z_{{9}}z_{{4}}-{\frac {110}{3}}z_{{8}}z_{{5}} +{\frac {1144}{9}}z_{{8}}z_{{4}}^{2}
-{\frac {176}{3}}z_{{7}}z_{{6}}
+{\frac {1672}{3}}z_{{7}}z_{{5}}z_{{4}}
-{\frac {21692}{27}}z_{{7}}z_{{4}}^{3} \n\\
&\qquad\qquad\;
+363z_{{6}}^{2}z_{{4}}
+{\frac {4466}{9}}z_{{6}}z_{{5}}^{2}
-{\frac {118184}{27}}z_{{6}}z_{{5}}z_{{4}}^{2}
+{\frac {309485}{81}}z_{{6}}z_{{4}}^{4}
-{\frac {164560}{81}}z_{{5}}^{3}z_{{4}} \n\\
&\qquad\qquad\;
+{\frac {871420}{81}}z_{{5}}^{2}z_{{4}}^{3}
-{\frac {1075250}{81}}z_{{5}}z_{{4}}^{5}
+{\frac {9784775}{2187}}z_{{4}}^{7}            \Big), \n\\ 
a^{11}&=\frac{4}{27\gamma^4} \Big(
z_{11}-{\frac {55}{3}}z_{{10}}z_{{4}}-{\frac {154}{3}}z_{{9}}z_{{5}}+{\frac {1760}{9}}z_{{9}}z_{{4}}^{2}
-{\frac {286}{3}}z_{{8}}z_{{6}}+{\frac {2948}{3}}z_{{8}}z_{{5}}z_4
-{\frac {41140}{27}}z_{{8}}z_{{4}}^{3}
\\
&\qquad\quad\;
-{\frac {176}{3}}z_{{7}}^{2}
+{\frac {14014}{9}}z_{{7}}z_{{6}}z_{{4}}
+{\frac {9482}{9}}z_{{7}}z_{{5}}^{2}
-{\frac {268532}{27}}z_{{7}}z_{{5}}z_{{4}}^{2}
+{\frac {247775}{27}}z_{{7}}z_{{4}}^{4} \\
&\qquad\quad\;
+{\frac {12199}{9}}z_{{6}}^{2}z_{{5}}
-{\frac {173723}{27}}z_{{6}}^{2}z_{{4}}^{2}
-{\frac {158950}{9}}z_{{6}}z_{{5}}^{2}z_{{4}}
+{\frac {5344460}{81}}z_{{6}}z_{{5}}z_{{4}}^{3}
-{\frac {10343905}{243}}z_{{6}}z_{{4}}^{5}
\\
&\qquad\quad\;
-{\frac {164560}{81}}z_{{5}}^{4}
+{\frac {11133980}{243}}z_{{5}}^{3}z_{{4}}^{2}
-{\frac {36171410}{243}}z_{{5}}^{2}z_{{4}}^{4}
+{\frac {320101925}{2187}}z_{{5}}z_{{4}}^{6}
-{\frac {283758475}{6561}}z_{{4}}^{8}
\Big).\n
\end{align*}
By Lem.\ref{lem:formalJacobi}, 
$a^5, a^{11}$ are Jacobi fields, and $p^{8}$ is a pseudo-Jacobi field.

\subsection{Inductive formulas}\label{sec:KFformulae}
Based on the initial analyses given above, 
we give the differential algebraic inductive formulas
for the respective formal Killing fields:

\one
\qquad \ref{sec:p4z4})\quad  Case $p^4=z_4$, 

\qquad \ref{sec:a5z5})\quad   Case $a^5=z_5-\frac{5}{3}z_4^2$.
 
\one
Note from Eqs.\eqref{eq:formalKilling_n}, \eqref{eq:formalKilling_n'} that 
one needs to solve for the coefficients $\{ s^*, t^*\}$, and $\{b^*, c^*\}$.

\subsubsection{Case $p^4=z_4$}\label{sec:p4z4}
Assume the initial data from \S\ref{sec:521}.

\two\noi
\tb{[Formulas for $\,s^{6n+3}, t^{6n+3}$]}.\,
Suppose all the coefficients up to $g^{6n+2}$ are known, $n\geq 1$.
We give a formula for $\{s^{6n+3}, t^{6n+3}\}.$

Set the truncated formal Killing field
$$ \tb{X}_{6n+2}:=
\left[ \begin {array}{ccc} -2\im \tb{a} &\tb{b}+\tb{f}+\tb{g}-\tb{s}
&\im \tb{b}- \im \tb{f}+\im \tb{g}+\im \tb{s}\\
\noalign{\medskip}-\tb{b}+\tb{f}+\tb{g}+\tb{s} & \im \tb{c}+\im \tb{a}-\im \tb{t}&-\tb{p}+\tb{c}+\tb{t}\\
\noalign{\medskip}-\im \tb{b}-\im \tb{f} +\im \tb{g}-\im \tb{s}
&\tb{p}+\tb{c}+\tb{t}&-\im \tb{c}+\im \tb{a}+\im \tb{t}\end {array} \right],
$$
where
\be\label{eq:components1}
\begin{array}{rlrl}
\tb{p}&=\sum_{k=0}^{n} p^{6k+4}\lambda^{6k+2},     
& \tb{a}&= \sum_{k=0}^{n-1} a^{6k+7}\lambda^{6k+5},         \\
\tb{b}&=\sum_{k=0}^{n-1} b^{6k+5}\lambda^{6k+3},  
& \tb{g}&= \sum_{k=0}^{n} g^{6k+2}\lambda^{6k},   \\
\tb{c}&=\sum_{k=0}^{n-1} c^{6k+5}\lambda^{6k+3},   
&\tb{s}&=\sum_{k=0}^{n} s^{6k+3}\lambda^{6k+1},   \\
\tb{f}&=\sum_{k=0}^{n-1} f^{6k+6}\lambda^{6k+4},    
&\tb{t}&= \sum_{k=0}^{n} t^{6k+3}\lambda^{6k+1}.
\end{array}
\ee
Here the unknown coefficients are $s^{6n+3}, t^{6n+3}, p^{6n+4}.$
The determinant   is given by
$$ \det(\tb{X}_{6n+2}) =\im( 4\tb{gsp} -4 \tb{fga}-4\tb{b$^2$c} -4\tb{f$^2$t}+4\tb{g$^2$c}
+4\tb{s$^2$t}+2\tb{a$^3$}-2\tb{ap$^2$}+8\tb{act}-4\tb{bsa}+4\tb{bfp}) .
$$
Expanding as a series in $\lambda$, let us denote
$$\det(\tb{X}_{6n+2}):=\sum_{j=0}^{3n} \tb{x}_{6n+2}^{6j+3} \lambda^{6j+3}.
$$

Consider now the derivative 
$$\delxb(\det(\tb{X}_{6n+2})).$$
The structure equation shows that this term stems from the absence of 
$b^{6n+5}, c^{6n+5}$-terms in $\tb{X}_{6n+2}$.
Hence only the terms that contain $p^{6n+4}$ contribute to
$\delxb(\det(\tb{X}_{6n+2}))$.

From the  determinant formula above, one finds by checking the $\lambda$-degree that
$$\delxb \tb{x}_{6n+2}^{6j+3}=0, \quad \tn{for}\; j\leq n.
$$
By Cor.\ref{cor:lemma5.4} and weighted homogeneity, this implies that
$$  \tb{x}_{6n+2}^{6j+3}=0, \quad \tn{for}\; j\leq n.
$$
Consider the term $\tb{x}_{6n+2}^{6n+3}$ of the highest $\lambda$-degree among these.  
We have
\begin{align} \label{eq:x6n+43}
\tb{x}_{6n+2}^{6n+3} &= 4\im(2s^3 t^3 s^{6n+3}+(s^3)^2t^{6n+3})+\tb{y}_{6n+2}^{6n+3},   
\end{align}
where $\tb{y}_{6n+2}^{6n+3} \in\mco(6n+2)$.

On the other hand, we have
\be\label{eq:delxg6n+2}
\delx g^{6n+2}=-\im h_3 s^{6n+3}-\im\gamma t^{6n+3}.
\ee
Combining \eqref{eq:x6n+43}, \eqref{eq:delxg6n+2}, one gets
\begin{align}\label{eq:st6n+3}
s^{6n+3}&= \frac{\im}{27\gamma}h_3^{-1}\left(9\gamma \delx g^{6n+2}
+ h_3^{\frac{2}{3}} \tb{y}_{6n+2}^{6n+3} \right),            \\
t^{6n+3}&= \frac{\im}{27\gamma^2}\left(18\gamma  \delx g^{6n+2}
- h_3^{ \frac{2}{3}} \tb{y}_{6n+2}^{6n+3} \right).                \n
\end{align}

\two\noi
\tb{[Formulas for $b^{6n-1}, c^{6n-1}$]}.\,
Suppose all the coefficients up to $p^{6n-2}$ are known, $n\geq 1$.
We give a formula for $\{b^{6n-1}, c^{6n-1}\}.$

\one
Given the truncated formal Killing field $\tb{X}_{6n+2}$ as above, let
$$ \det(\mu \tn{I}_3+\tb{X}_{6n+2})=\mu^3+\sigma_2(\tb{X}_{6n+2})\mu+\det(\tb{X}_{6n+2})
$$
be the characteristic polynomial. For the case at hand, we utilize $\sigma_2(\tb{X}_{6n+2}).$
It is given by the formula
$$\sigma_2(\tb{X}_{6n+2})
= 3\tb{a}^2+\tb{p}^2-4\tb{c}\tb{t}-4\tb{b}\tb{s}-4\tb{f}\tb{g}.
$$
Expanding  as a series in $\lambda$, let us denote
$$\sigma_2(\tb{X}_{6n+2}):=\sum_{j=0}^{2n} \tb{x}_{6n+2}^{6j+4} \lambda^{6j+4}.
$$

Consider the  derivative $\delxb( \sigma_2(\tb{X}_{6n+2}) )$.
By the similar argument as above, one finds that
$$\delxb \tb{x}_{6n+2}^{6j-2}=0, \quad \tn{for}\; j\leq n,
$$
and hence  
$$  \tb{x}_{6n+2}^{6j-2}=0, \quad \tn{for}\; j\leq n.
$$

\one
Consider the term $\tb{x}_{6n+2}^{6n-2}$.  Then
\begin{align} \label{eq:x6n-6}
\tb{x}_{6n+2}^{6n-2}&= -4 s^3 b^{6n-1} -4 t^3c^{6n-1} +\tb{y}_{6n+2}^{6n-2}.
\end{align}
Here $\tb{y}_{6n+2}^{6n-2} \in\mco(6n-2)$.

On the other hand, we have
\be\label{eq:delxp6n+4}
\delx p^{6n-2}= \im \gamma b^{6n-1}+2\im h_3c^{6n-1}.
\ee
Combining \eqref{eq:x6n-6}, \eqref{eq:delxp6n+4}, one gets
\begin{align}\label{eq:bc6n-1}
b^{6n-1}&= \frac{\im}{9\gamma}\left(-3\delx p^{6n-2}
+ h_3^{\frac{1}{3}} \tb{y}^{6n-2}_{6n+2} \right),            \\
c^{6n-1}&= -\frac{\im}{18}h_3^{-1}\left( 6\delx p^{6n-2}
+ h_3^{ \frac{1}{3}} \tb{y}^{6n-2}_{6n+2} \right).                \n
\end{align}

\begin{thm}\label{thm:FKformulaep4}
Given the ansatz $p^4=z_4$ and the initial data described in \S\ref{sec:521};
\begin{enumerate}[\qquad a)]
\item
there exists a $\g^{\C}[[\lambda]]$-valued canonical formal Killing field $\tb{X}(p^4)$ 
which extends these data.
The coefficients of its components are generated by the structure equation \eqref{eq:formalKilling_n},
and the differential algebraic  inductive formulas  \eqref{eq:st6n+3},\eqref{eq:bc6n-1}.
Equivalently, $\tb{X}(p^4)$ is determined by the  constraint,
$$\det(\mu \tn{I}_3+\tb{X}(p^4))=\mu^3+\left( \frac{27}{2}\gamma^2\right)\lambda^3.$$
Here $\tn{I}_3$ denotes the 3-by-3 identity matrix.
\item
each coefficient  of $\tb{X}(p^4)$ is an element in the polynomial ring $\C[z_4, z_5, \, ... \, ]$ 
up to scaling by appropriate powers of $h_3^{\frac{1}{3}}$.
\end{enumerate}
\end{thm}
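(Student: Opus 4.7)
The plan is to prove both parts simultaneously by strong induction on $n$, building up the canonical formal Killing field block by block along the periodic recursion of Figure \ref{fig:diagram0}. The induction hypothesis at stage $n$ would assert that (i) all coefficients through $g^{6n+2}$ and $p^{6n+4}$ have been constructed, (ii) they lie in $\C[z_4, z_5, \ldots]$ multiplied by appropriate powers of $h_3^{1/3}$ with the spectral weight dictated by \eqref{eq:spectralweight}, (iii) they satisfy the appropriate portion of the structure equation \eqref{eq:formalKilling_n}, and (iv) the truncated determinant $\det(\tb{X}_{6n+2})$ agrees with the target $\mu^3 + \tfrac{27}{2}\gamma^2\lambda^3$ through $\lambda$-degree $6n+3$, while the truncated $\sigma_2(\tb{X}_{6n+2})$ vanishes through $\lambda$-degree $6n-2$.

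For the inductive step, I would first realize the upper-triangular half of the recursion diagram. Starting from $g^{6n+2}$, the task is to produce $s^{6n+3}, t^{6n+3}$ by inverting $\delxb$. The key observation is that the only terms in $\det(\tb{X}_{6n+2})$ that could feed a nonzero $\delxb$ live with $b^{6n+5}, c^{6n+5}$, which are absent from the truncation; hence the low-$\lambda$-degree coefficients $\tb{x}^{6j+3}_{6n+2}$ are $\delxb$-closed. By Corollary \ref{cor:lemma5.4} together with weighted homogeneity, these coefficients must vanish outright, and reading off the coefficient of $\lambda^{6n+3}$ yields \eqref{eq:x6n+43}. Combined with \eqref{eq:delxg6n+2}, this gives the pair of algebraic formulas \eqref{eq:st6n+3}. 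The parallel argument applied to $\sigma_2(\tb{X}_{6n+2})$ produces \eqref{eq:bc6n-1} for $b^{6n-1}, c^{6n-1}$ in terms of the already-constructed $p^{6n-2}$. The remaining coefficients $f^{6n+6}, a^{6n+7}, g^{6n+8}, p^{6n+10}$ are then obtained by direct $\delx$-differentiation along the right-arrows of Figure \ref{fig:diagram0}.

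To close the induction, claim (b) follows at each step from \eqref{eq:zetastruct}, which shows that both $\delx$ and the algebraic formulas \eqref{eq:st6n+3}, \eqref{eq:bc6n-1} preserve $\C[z_4, z_5, \ldots]$ up to the appropriate $h_3^{1/3}$ scaling. Once $\tb{X}(p^4)$ is fully assembled, the Killing field equation \eqref{eq:KillingEquation} forces its characteristic polynomial to be constant along the directions tangent to the integral surfaces; since each coefficient of $\det(\mu \tn{I}_3 + \tb{X}(p^4))$ depends only on balanced coordinates, Corollary \ref{cor:lemma5.4} promotes it to an absolute constant, and evaluation at the initial data $g^2 = 0, p^2 = 0$ with the $s^3, t^3$ of \S\ref{sec:521} yields $\mu^3 + \tfrac{27}{2}\gamma^2\lambda^3$, giving the equivalent determinantal characterization.

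The main obstacle concerns verifying the $\delxb$-equations in \eqref{eq:formalKilling_n} for the newly constructed $s^{6n+3}, t^{6n+3}, b^{6n-1}, c^{6n-1}$. The algebraic formulas pin these coefficients down purely from determinantal constraints and the already-known $\delx$-derivatives, so it is not automatic that the $\delxb$-half of the structure equation is also satisfied. The resolution will be a rigidity argument: the Maurer-Cartan identity \eqref{eq:primitiveMC} for $\psi_\lambda$ guarantees that the discrepancy between the two sides of each such $\delxb$-equation is itself $\delxb$-closed and has admissible spectral weight lying in $\mcp_d(k)$ for appropriate $d,k$; Lemma \ref{lem:lemma5.4'} together with Lemma \ref{lem:delbpoly0} then forces the discrepancy to vanish. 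The care needed to match weights and orders correctly in this compatibility check will be the technically delicate portion of the argument.
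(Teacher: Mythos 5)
Your proposal follows essentially the same route as the paper: the truncated field $\tb{X}_{6n+2}$, the $\delxb$-closedness and hence vanishing (via Cor.\ref{cor:lemma5.4} and weighted homogeneity) of the low-$\lambda$-degree coefficients of $\det$ and $\sigma_2$, and the linear solve of the resulting algebraic identity against $\delx g^{6n+2}$ and $\delx p^{6n-2}$ to produce \eqref{eq:st6n+3} and \eqref{eq:bc6n-1}, with the remaining coefficients obtained by $\delx$-differentiation. The one point where you go beyond the text is your explicit flagging of the compatibility of the algebraically determined $s^{*}, t^{*}, b^{*}, c^{*}$ with the $\delxb$-half of \eqref{eq:formalKilling_n}, which the paper leaves implicit; your proposed rigidity argument via the Maurer--Cartan identity together with Lem.\ref{lem:lemma5.4'} and Lem.\ref{lem:delbpoly0} is the natural way to close that gap.
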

\begin{cor}\label{cor:FKformulaep4}
Given the formal Killing field $\tb{X}(p^4)$,
the sequence of coefficients $$\{\, p^{6n+4}, \ol{p}^{6n+4} \,\}_{n=0}^{\infty}$$
are distinct higher-order pseudo-Jacobi fields,
and the sequence of coefficients $$\{\, a^{6n+7}, \ol{a}^{6n+7} \,\}_{n=0}^{\infty}$$
are distinct higher-order Jacobi fields.
\end{cor}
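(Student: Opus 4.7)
The plan is to construct $\tb{X}(p^4)$ by outer induction on the 6-step blocks of Fig.~\ref{fig:diagram0}, with two internal sub-inductions handling the left-arrow pairs $(s^{6n+3}, t^{6n+3})$ and $(b^{6n+5}, c^{6n+5})$ that cannot be obtained purely by $\delx$-differentiation. The remaining coefficients $f^{\ast}, a^{\ast}, g^{\ast}, p^{\ast}$ are then defined by the $\delx$-rules from the structure equation \eqref{eq:formalKilling_n}, which are differential rather than integrating steps. The initial data of \S\ref{sec:521} supplies the base case and, by direct calculation, verifies that the constraint $\det(\mu\tn{I}_3 + \tb{X}) = \mu^3 + \tfrac{27}{2}\gamma^2\lambda^3$ holds to leading $\lambda$-degree.

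For the inductive step, suppose all coefficients up to $g^{6n+2}$ are known and satisfy both \eqref{eq:formalKilling_n} and the determinantal constraint. Because $b^{6n+5}, c^{6n+5}$ are absent from the truncated $\tb{X}_{6n+2}$, the only source of $\delxb$-dependence in $\det(\tb{X}_{6n+2})$ through its lowest-$\lambda$-degree coefficients $\{\tb{x}_{6n+2}^{6j+3}\}_{j\leq n}$ comes through $p^{6n+4}$, which however enters only in strictly higher $\lambda$-degree terms. A degree-count therefore gives $\delxb\tb{x}_{6n+2}^{6j+3}=0$ for $j\leq n$; by Cor.~\ref{cor:lemma5.4} and the weighted homogeneity afforded by the spectral weights (Defn.~\ref{defn:spectralweights}), these coefficients must in fact vanish. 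The vanishing of the top coefficient $\tb{x}_{6n+2}^{6n+3}$, combined with the structural identity $\delx g^{6n+2} = -\im h_3 s^{6n+3} - \im\gamma t^{6n+3}$, then yields two linear equations in $(s^{6n+3}, t^{6n+3})$ whose coefficient matrix is nonsingular, and one inverts it to obtain \eqref{eq:st6n+3}. The symmetric argument using $\sigma_2(\tb{X}_{6n+2})$ produces $(b^{6n-1}, c^{6n-1})$ via \eqref{eq:bc6n-1}, after which $\delx$-differentiation of the structure equation completes the new block.

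The principal obstacle is not producing the formulas, but verifying that the coefficients so defined also satisfy the $\delxb$-portion of the structure equation (e.g.\ $\delxb s^{6n+3} = -\im\bar h_3 g^{6n+2}$), which is not automatic from the $\delx$-recursions alone. I would resolve this by applying $\ed^2 = 0$ to the Killing-field equation and using the Maurer--Cartan identity \eqref{eq:primitiveMC}: any residual $\delxb$-discrepancy in the newly constructed block would be a $\delxb$-flat expression of the type precluded by Lem.~\ref{lem:lemma5.4'}. Part (b) then follows by applying Cor.~\ref{cor:delbpoly} inductively, since each freshly defined coefficient's $h_3^{1/3}\delxb$-image lies in the appropriate $\mcq_d(k)$-subspace by induction, forcing the coefficient itself into $\mcp_{d+1}(k) \subset \C[z_4, z_5, \ldots]$ up to $h_3^{\pm 1/3}$-scaling. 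Finally, for Cor.~\ref{cor:FKformulaep4}, Lem.~\ref{lem:formalJacobi} immediately shows that each $p^{6n+4}$ is a pseudo-Jacobi field and each $a^{6n+7}$ is a Jacobi field; distinctness then follows from the spectral weights, since $p^{6n+4}$ is weighted homogeneous of weight $6n+1$ and $a^{6n+7}$ of weight $6n+4$, so the weights separate both sequences and, via complex conjugation flipping sign, their conjugates as well.
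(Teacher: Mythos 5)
Your proposal is correct and takes essentially the same route as the paper: the bulk of your argument reproduces the inductive construction of Thm.~\ref{thm:FKformulaep4} given in \S\ref{sec:p4z4} (truncated Killing field, vanishing of the low-degree coefficients of $\det$ and $\sigma_2$ via Cor.~\ref{cor:lemma5.4} and weighted homogeneity, inversion of the resulting linear system together with $\delx g^{6n+2}$, resp.\ $\delx p^{6n-2}$). The corollary is then obtained exactly as the paper intends, by combining Lem.~\ref{lem:formalJacobi} for the (pseudo-)Jacobi property, Thm.~\ref{thm:FKformulaep4}(b) for membership in $\C[z_4,z_5,\ldots]$ up to powers of $h_3^{1/3}$, and the pairwise distinct spectral weights $6n+1$ and $6n+4$ (with opposite signs for the conjugates) for distinctness.
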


\subsubsection{Case $a^5=z_5-\frac{5}{3}z_4^2$}\label{sec:a5z5}
Recall that we follow \eqref{eq:KFcomponents'}, and  \eqref{eq:formalKilling_n'}.

Assume the initial data from \S\ref{sec:522}.
By the same analysis as in \S\ref{sec:p4z4},
we obtain the corresponding formal Killing field $\tb{X}(a^5)$.
\begin{thm}\label{thm:FKformulaea5}  
Given the ansatz $a^5=z_5-\frac{5}{3}z_4^2$ and the initial data described in \S\ref{sec:522};
\begin{enumerate}[\qquad a)]
\item
there exists a $\g^{\C}[[\lambda]]$-valued canonical formal Killing field $\tb{X}(a^5)$
which extends these data.
The coefficients of its components are determined by the structure equation \eqref{eq:formalKilling_n'},
and   the  constraint,
$$\det(\mu \tn{I}_3+\tb{X}(a^5))=\mu^3-\left(\frac{729}{4}\im\gamma^4 \right)\lambda^3.$$
\item
each coefficient  of $\tb{X}(a^5)$ is an element in the polynomial ring $\C[z_4, z_5, \, ... \, ]$ 
up to scaling by appropriate powers of $h_3^{\frac{1}{3}}$.
\end{enumerate}
\end{thm}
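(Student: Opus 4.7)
The plan is to carry out an analysis that mirrors the proof of Thm.~\ref{thm:FKformulaep4} in \S\ref{sec:p4z4}, with appropriate index shifts reflecting the decomposition \eqref{eq:KFcomponents'} and the structure equation \eqref{eq:formalKilling_n'}. Starting from the initial data of \S\ref{sec:522} (in particular, $p^2=0$, $b^3=-\tfrac{9}{2}\gamma h_3^{1/3}$, $c^3=\tfrac{9}{4}\gamma^2 h_3^{-2/3}$, and the first explicit coefficients listed there), I will inductively construct the remaining coefficients of $\tb{X}(a^5)$ in pairs, using the constancy (indeed the prescribed value) of the characteristic polynomial as the algebraic closure condition.

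Concretely, assume all coefficients of $\tb{X}(a^5)$ through $p^{6n+2}$ have been determined and form the truncated $\sla(3,\C)$-valued matrix $\tb{X}_{6n+5}$, exactly as in \S\ref{sec:p4z4} but with the index convention of \eqref{eq:components1} replaced by \eqref{eq:KFcomponents'}. First I would solve for $\{b^{6n+3},c^{6n+3}\}$: expand $\det(\tb{X}_{6n+5})$ as a series in $\lambda$, and use the structure equation together with the induction hypothesis to show that each $\lambda^{6j+3}$-coefficient with $j\le n$ is annihilated by $\delxb$. By Cor.~\ref{cor:lemma5.4}, together with the weighted homogeneity provided by Lem.~\ref{lem:Tjh} and Lem.~\ref{lem:delbpoly0}, these coefficients must vanish. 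Extracting the $\lambda^{6n+3}$-coefficient gives a linear relation of the schematic form
\[
-4\im(2b^3 c^3\, b^{6n+3}+(b^3)^2\, c^{6n+3})+\tb{y}_{6n+5}^{6n+3}=0,
\]
with $\tb{y}_{6n+5}^{6n+3}\in\mco(6n+2)$ explicitly computable from lower-order data. Coupling this with $\delx p^{6n+2}=\im\gamma b^{6n+3}+2\im h_3 c^{6n+3}$ yields a nonsingular $2\times 2$ linear system (the determinant is nonzero precisely because $h_3\ne 0$ on $\Fh{\infty}_{**}$), and hence explicit differential-algebraic formulas for $b^{6n+3}$ and $c^{6n+3}$.

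Next I would solve for $\{s^{6n+1},t^{6n+1}\}$ by the parallel procedure applied to $\sigma_2(\tb{X}_{6n+5})=3\tb{a}^2+\tb{p}^2-4\tb{c}\tb{t}-4\tb{b}\tb{s}-4\tb{f}\tb{g}$: show that all $\lambda^{6j}$-coefficients with $j\le n$ vanish, then extract the $\lambda^{6n}$-coefficient, whose leading part involves only $b^3 s^{6n+1}$ and $c^3 t^{6n+1}$. Combining this with $\delx g^{6n}=-\im h_3 s^{6n+1}-\im\gamma t^{6n+1}$ again produces a nonsingular $2\times 2$ system with explicit solution. The remaining coefficients $f^{6n+4}$, $a^{6n+5}$, $g^{6n+6}$ then arise by direct differentiation using \eqref{eq:formalKilling_n'}. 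Part (b), that each coefficient lies in $\C[z_4,z_5,\ldots]$ up to a power of $h_3^{1/3}$, follows from an induction in the spectral weight: Lem.~\ref{lem:delbpoly0} guarantees that the inverse $\delxb$-steps preserve the polynomial ring, and the differential $\delx$-steps preserve it by the structure equation \eqref{eq:zetastruct}.

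The main technical obstacle, as in the $p^4$ case, is verifying that the $\delxb$-vanishing of the lower $\lambda$-graded pieces of $\det(\tb{X}_{6n+5})$ and $\sigma_2(\tb{X}_{6n+5})$ really follows from the structure equation and the induction hypothesis, and that the resulting linear system for $(b^{6n+3},c^{6n+3})$ (respectively $(s^{6n+1},t^{6n+1})$) has nonzero determinant for every $n$; this reduces to the computation $(b^3)^2\cdot 2h_3 - 2 b^3 c^3\cdot \im\gamma \ne 0$ and its $\sigma_2$-analogue, both of which are verified from the explicit initial data. The fact that the constraint on the characteristic polynomial is $\mu^3-(\tfrac{729}{4}\im\gamma^4)\lambda^3$ is then read off by evaluating $\det(\tb{X}(a^5))$ on the initial data, since by construction the determinant is a constant in $\C[[\lambda^6]]\cdot\lambda^3$ and the leading coefficient is fixed by $b^3, c^3$.
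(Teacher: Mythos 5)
Your proposal is correct and follows essentially the same route as the paper: the printed proof of Thm.\ref{thm:FKformulaea5} simply invokes ``the same analysis as in \S\ref{sec:p4z4},'' and the details you supply (truncating to $\tb{X}_{6n+5}$, killing the low $\lambda$-graded pieces of $\det$ and $\sigma_2$ via Cor.\ref{cor:lemma5.4} plus weighted homogeneity, coupling the top surviving coefficient with $\delx p^{6n+2}$, resp.\ $\delx g^{6n}$, to get a nonsingular $2\times 2$ system, and reading the constant $-\frac{729}{4}\im\gamma^4$ off the initial data $b^3,c^3$) are exactly the intended argument with the index conventions of \eqref{eq:KFcomponents'}. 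The only cosmetic slip is the schematic form of the $2\times2$ determinant (it is $2b^3c^3\cdot 2\im h_3-(b^3)^2\,\im\gamma=-\tfrac{243}{4}\im\gamma^3h_3^{2/3}\ne0$), which does not affect the conclusion.
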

\begin{cor}\label{cor:FKformulaea5}
Given the formal Killing field $\tb{X}(a^5)$,
the sequence of coefficients $$\{\, a^{6n+5}, \ol{a}^{6n+5} \,\}_{n=0}^{\infty}$$
are distinct higher-order Jacobi fields,
and the sequence of coefficients $$\{\, p^{6n+8}, \ol{p}^{6n+8} \,\}_{n=0}^{\infty}$$
are distinct higher-order pseudo-Jacobi fields.
\end{cor}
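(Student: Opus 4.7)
The plan is to deduce Cor.~\ref{cor:FKformulaea5} by combining three ingredients already established in the paper: (i) the existence/polynomial character of $\tb{X}(a^5)$ from Thm.~\ref{thm:FKformulaea5}, (ii) the fact that the recursion \eqref{eq:formalKilling_n'} automatically forces $a^{6n+5}$ to satisfy the Jacobi equation and $p^{6n+8}$ to satisfy the pseudo-Jacobi equation, via Lem.~\ref{lem:formalJacobi}, and (iii) the uniqueness clause in the classification Thm.~\ref{thm:classifyJacobi}. So I will not prove anything new, but rather package these.

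First, I would invoke Thm.~\ref{thm:FKformulaea5}(a) to guarantee that $\tb{X}(a^5)$ exists and its components satisfy \eqref{eq:formalKilling_n'}; Lem.~\ref{lem:formalJacobi} (adapted to the shifted indexing of \eqref{eq:KFcomponents'}, i.e.\ applied to each $n$-th block) then immediately yields that $a^{6n+5}\in\mfj^{(\infty)}$ and $p^{6n+8}\in\mfj'^{(\infty)}$. Next, Thm.~\ref{thm:FKformulaea5}(b) says that up to a power of $h_3^{1/3}$ every coefficient lies in $\mcr=\C[z_4,z_5,\ldots]$, so in particular $a^{6n+5}\in\mcr$ and $p^{6n+8}\in\mcr$ are \emph{higher-order} (pseudo) Jacobi fields in the sense of Defn.~\ref{defn:Jacobifield}.

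The key structural remark is that the recursion preserves spectral weight. Inspecting the initial data in \S\ref{sec:522} gives $\tn{weight}(a^5)=2$ and $\tn{weight}(p^8)=5$; since each step of the recursion \eqref{eq:formalKilling_n'} increases the upper index by $6$ while the $\lambda$-homogeneity shifts correspondingly by $6$, and since the operators $\delx,\delxb$ together with multiplication by $h_3^{\pm 1/3}, \gamma$ contribute known weight shifts (as in Lem.~\ref{lem:mcez} and the formulas of \S\ref{sec:balancedstrt}), an easy bookkeeping shows
\[
\tn{weight}(a^{6n+5})=6n+2, \qquad \tn{weight}(p^{6n+8})=6n+5.
\]
These weights hit every residue class $\equiv 2 \pmod 6$ (resp.\ $\equiv 5 \pmod 6$) exactly once. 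Applying Thm.~\ref{thm:classifyJacobi}(a),(b), each weighted homogeneous Jacobi (resp.\ pseudo-Jacobi) field in $\mcr$ at an admissible degree is unique up to constant scale, so the $a^{6n+5}$ occupy pairwise distinct one-dimensional weight eigenspaces of $\mce$, and likewise for the $p^{6n+8}$. The complex conjugates $\ol{a}^{6n+5},\,\ol{p}^{6n+8}$ lie in $\ol{\mcr}$ with opposite (negative) spectral weight, hence are independent of the $a^{*}, p^{*}$ and of each other, giving the asserted distinctness.

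The main obstacle, and the only point requiring genuine verification rather than bookkeeping, is nonvanishing: one must rule out that some $a^{6n+5}$ or $p^{6n+8}$ collapses to $0$ along the recursion. I would handle this by tracking the leading monomial in the highest-order balanced coordinate. By induction on $n$, using the explicit form of the formulas \eqref{eq:st6n+3}, \eqref{eq:bc6n-1} (transcribed to the $a^5$-case as in \S\ref{sec:a5z5}), the leading symbol at each stage is $z_{6n+5}$ (resp.\ $z_{6n+8}$) times a nonzero rational constant in $\gamma$, as already visible in the worked examples $a^5,\,a^{11}$ and $p^8$ of \S\ref{sec:522}. Since a nonzero leading term in a distinct weight forbids cancellation, this establishes nontriviality and completes the argument.
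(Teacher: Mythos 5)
Your proposal is correct and follows the same route the paper intends (the corollary is stated without a separate proof, but the ingredients are exactly Lem.~\ref{lem:formalJacobi} applied to the shifted structure equation \eqref{eq:formalKilling_n'}, part (b) of Thm.~\ref{thm:FKformulaea5} for the higher-order/polynomial character, and the spectral-weight count $\mathrm{weight}(a^{6n+5})=6n+2$, $\mathrm{weight}(p^{6n+8})=6n+5$ for distinctness). Your explicit induction on the leading monomial $z_{6n+5}$ (resp.\ $z_{6n+8}$) to rule out vanishing is a sound way to make precise what the paper leaves implicit in the worked examples of \S\ref{sec:522} and the inductive formulas.
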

 
\section{Higher-order conservation laws}\label{sec:highercvlaws1}
Recall that the classical conservation laws are defined  
as the elements in the 1-st characteristic cohomology of the quotient complex 
$$(\Omega^*(X)/\mci, \underline{\ed}).$$
Generalizing this, the conservation laws of the minimal Lagrangian system 
are defined as the elements in the 1-st  characteristic cohomology of the quotient complex
of the infinitely prolonged differential system $(\xinfh,\iinfh)$,
$$(\Omega^*(\xinfh)/\iinfh, \underline{\ed}),$$
where $\underline{\ed}=\ed\mod\iinfh.$
 
In this section, we give a  description of the infinite sequence of higher-order conservation laws
generated by the canonical formal Killing fields $\tb{X}(p^4), \tb{X}(a^5).$ 
\sub{Definition}
Let $(\Omega^*(\xinfh), \ed)$ be the de-Rham complex of $\C$-valued differential forms on $\xinfh$.
Let $$(\underline{\Omega}^*=\Omega^*(\xinfh)/\iinfh, \underline{\ed})$$ be the quotient space
equipped with the induced differential $\underline{\ed}=\ed\mod\iinfh$.
The prolongation sequence of Pfaffian systems $\Ih{k}$ satisfy the inductive closure conditions
$$\ed\Ih{k}\equiv 0\mod \Ih{k+1}, \; k\geq 1.$$
It follows that  $\iinfh=\cup_{k=0}^{\infty}\Ih{k}$ is formally Frobenius, and
$(\underline{\Omega}^*, \underline{\ed})$ 
becomes a complex.
Let $H^{q}(\underline{\Omega}^*,\, \underline{\ed})$ be the cohomology   
at $\underline{\Omega}^q.$  The set 
$$\{ \;H^{q}(\underline{\Omega}^*,\, \underline{\ed}) \;\}_{q=0}^2$$
 is called the \emph{characteristic cohomology} of the differential system $(\xinfh,\iinfh)$.
\begin{defn}
Let $(\xinfh,\iinfh)$ be the triple cover
of the infinite prolongation of the differential system for minimal Lagrangian surfaces.
A \tb{conservation law} is an element of the 1-st characteristic cohomology $H^1(\underline{\Omega}^*,\, \underline{\ed})$ of $(\xinfh,\iinfh)$. The $\C$-vector space of  conservation laws is denoted by
\[ \mcc^{(\infty)}:=H^1(\underline{\Omega}^*,\, \underline{\ed}).\]
Let $\mcc^{(\infty)}_{loc}$ denote  the space of local  conservation laws  of $\iinfh$
restricted to a small contractible open subset of $\xinfh$.
\end{defn}
For simplicity, we shall suppress the global issues 
and identify $\mcc^{(\infty)}\simeq\mcc^{(\infty)}_{loc}.$ 

Note by definition that the classical conservation laws $\mcc^{(0)}\subset\mcc^{(\infty)}.$
\subb{Spectral sequence}
Consider the filtration by the subspaces
$$ F^p\Omega^q =\tn{Image}\{\underbrace{\iinfh\w\iinfh\, ... }_{p}\w:\Omega^*(\xinfh)\to\Omega^{q}(\xinfh)\}.$$
From the associated graded
$F^p\Omega^*/F^{p+1}\Omega^*,$
a standard construction yields the spectral sequence
$$(E^{p,q}_r, \ed_r), \quad \ed_r \;\tn{has bidegree}\;(r, 1-r), \quad r\geq 0.$$

From the fundamental theorem \cite[p562, Theorem 2 and Eq.(4)]{Bryant1995}, 
at least locally the following sub-complex is exact,
\be\label{eq:exactsequence}
0\to E^{0,1}_1\hook E^{1,1}_1\to E^{2,1}_1.
\ee
Here, by definition, the first piece is given by
\begin{align*}
E^{0,1}_1 &=\{ \varphi\in\Omega^1(\xinfh) \vert \ed\varphi\equiv 0\mod\iinfh\}/
\{ \ed\Omega^0(\xinfh) +\Omega^1(\iinfh)\} \\
&=H^1(\Omega^*(\xinfh)/\iinfh, \underline{\ed})\n\\
&=\mcc^{(\infty)},\n
\end{align*}
and it is the space of conservation laws.

\subb{$E^{1,1}_1$}\label{sec:cvsymbol}
The second piece $E^{1,1}_1$ is called the space of cosymmetries. 
In the present case, the differential system is formally self-adjoint and
this is the space of Jacobi fields,  
$$E^{1,1}_1=\mfj^{(\infty)}.$$
The differential $\ed_1: E^{0,1}_1\hook E^{1,1}_1$ can be considered as the symbol map
for conservation laws.

The space $E^{1,1}_1$ admits the following analytic description.
Let $\Phi$ be a 2-form which represents a class in $E^{1,1}_1$.
By definition, one may write
$$\Phi\equiv A \Psi -\theta_0 \w \sigma \mod F^2\Omega^2,
$$
for a scalar coefficient $A$ and a 1-form $\sigma$,  where 
$$\Psi=\tn{Im}(\theta_1\w\xi)=-\frac{\im}{2}(\theta_1\w\xi-\thetab_1\w\xib).
$$
Recall
\begin{align}\label{1dPsi}
\ed\Psi&=  3\im\gamma^2\theta_0\w(\xi\w\xib+\theta_1\w\thetab_1)     \\
&\equiv 0\mod\theta_0.     \n
\end{align}

We wish to show that the coefficient $A$ is a Jacobi field.
Differentiating  $\Phi$, one gets
$$0\equiv \ed A\w\Psi-\ed\theta_0\w\sigma \mod \theta_0, F^2\Omega^3.
$$
Since $\ed\theta_0=-\frac{1}{2}(\theta_1\w\xi+\thetab_1\w\xib)$, this implies that
$$\sigma\equiv - \im\left((\delx A)\xi-(\delxb A)\xib\right)\mod\iinfh.
$$
With the given  $\sigma$, the coefficient of $\theta_0\w\xi\w\xib$-term in $\ed\Phi$
then shows that 
$$\mce(A)=0$$ and $A$ is a Jacobi field.
We thus have the isomorphism $E^{1,1}_1\simeq \mfj^{(\infty)}.$

\subsection{Conservation laws from formal Killing fields}\label{sec:cvlawfromFK}
A question arises as to if the infinite sequence of Jacobi fields 
for the minimal Lagrangian system indeed 
correspond to the sequence of conservation laws, i.e.,
if the symbol map 
$$\ed_1: \mcc^{(\infty)}_{loc}=E^{0,1}_1\hook E^{1,1}_1$$ is surjective and
a higher-order version of Noether's theorem holds for the minimal Lagrangian system.
We show that, from the two formal Killing fields constructed in the previous section,
we are able to assemble an infinite sequence of higher-order conservation laws. 
It is likely that they are nontrivial and,
considering their spectral weights,  
 the higher-order Noether's theorem holds for the minimal Lagrangian system.

\two\noi
[\tb{Formal Killing field $\tb{X}(p^4)$}]
Recall the structure equation \eqref{eq:formalKilling_n}.
Set
\be\label{eq:varphin}
\varphi_n:=b^{6n+5}\xi+s^{6n+3}\xib.
\ee
The structure equation  shows that
\[ \ed\varphi_n\equiv 0 \mod\iinfh,
\]
and $\varphi_n$ represents a conservation law.

\two\noi
[\tb{Formal Killing field $\tb{X}(a^5)$}]
Recall the structure equation \eqref{eq:formalKilling_n'}.
Set
\be\label{eq:varphin'}
\varphi'_n:=b^{6n+3}\xi+s^{6n+1}\xib.
\ee
The structure equation shows that
\[ \ed\varphi'_n\equiv 0 \mod\iinfh,
\]
and $\varphi'_n$ represents a conservation law.

\begin{thm}\label{thm:FKcvlaw}
Let $\tb{X}(p^4), \tb{X}(a^5)$ be the formal Killing fields generated from the initial data
$p^4=z_4, a^5=z_5-\frac{5}{3}z_4^2$ respectively in \S\ref{sec:formalKilling}.
Then the associated sequence of 1-forms 
$$\varphi_n, \; \varphi'_n, \quad n=0, 1, 2, \, ... \, ,$$
represent the higher-order conservation laws.
\end{thm}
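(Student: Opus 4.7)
The plan is a direct verification that each 1-form $\varphi_n$ and $\varphi'_n$ is closed modulo the Pfaffian ideal $\iinfh$, which by definition places its class in $\mcc^{(\infty)} = H^1(\underline{\Omega}^*, \underline{\ed})$.

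First I would compute $\ed \varphi_n$ by the Leibniz rule and the recursive structure equation \eqref{eq:formalKilling_n}. Specifically, modulo $\iinfh$ one has
\begin{align*}
\ed b^{6n+5} &\equiv -\im b^{6n+5}\rho + \im h_3 f^{6n+6}\xi + \tfrac{\im}{2}\gamma p^{6n+4}\xib, \\
\ed s^{6n+3} &\equiv \phantom{-}\im s^{6n+3}\rho + \tfrac{\im}{2}\gamma p^{6n+4}\xi - \im\hb_3 g^{6n+2}\xib,
\end{align*}
together with $\ed\xi \equiv \im\rho\w\xi$ and $\ed\xib \equiv -\im\rho\w\xib$ taken from \eqref{eq:strt22} modulo $\iinfh$. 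Expanding
$$\ed\varphi_n = \ed b^{6n+5}\w\xi + b^{6n+5}\,\ed\xi + \ed s^{6n+3}\w\xib + s^{6n+3}\,\ed\xib,$$
the connection-1-form terms $\mp\im b^{6n+5}\rho\w\xi$ and $\pm\im s^{6n+3}\rho\w\xib$ cancel in pairs, while the $\xi\w\xi$ and $\xib\w\xib$ contributions vanish trivially. The only surviving pieces are $\tfrac{\im}{2}\gamma p^{6n+4}\xib\w\xi$ from $\ed b^{6n+5}\w\xi$ and $\tfrac{\im}{2}\gamma p^{6n+4}\xi\w\xib$ from $\ed s^{6n+3}\w\xib$, which cancel each other by antisymmetry. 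Hence $\ed\varphi_n \equiv 0 \pmod{\iinfh}$ and $[\varphi_n] \in \mcc^{(\infty)}$.

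The argument for $\varphi'_n$ is identical, using the companion structure equation \eqref{eq:formalKilling_n'} with the indices shifted as in \eqref{eq:KFcomponents'}; the same two pieces cancel by antisymmetry. Conceptually, the proof is the abstract observation that the Killing field equation \eqref{eq:KillingEquation} pairs the coefficients $(b^{6n+5}, s^{6n+3})$ (respectively $(b^{6n+3}, s^{6n+1})$) so that their contraction with $(\xi, \xib)$ automatically represents a cocycle in the characteristic complex; no calculation beyond the structure equations already recorded in \S\ref{sec:KFstrt} is required. There is no serious technical obstacle. The only delicate issue, which the statement does not require but which would be natural to address, is the \emph{nontriviality} and \emph{distinctness} of the resulting classes in $\mcc^{(\infty)}$; this would follow by computing their images under the symbol map $\ed_1 \colon E^{0,1}_1 \hookrightarrow E^{1,1}_1 \simeq \mfj^{(\infty)}$ of \S\ref{sec:cvsymbol} and observing that the symbol of $\varphi_n$ (resp.\ $\varphi'_n$) recovers the higher-order Jacobi field $a^{6n+7}$ (resp.\ $a^{6n+5}$), which have pairwise distinct spectral weights by Cor.\ref{cor:FKformulaep4} and Cor.\ref{cor:FKformulaea5}.
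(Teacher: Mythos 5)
Your proposal is correct and follows essentially the same route as the paper: the paper's argument for Thm.~\ref{thm:FKcvlaw} is precisely the observation that the structure equations \eqref{eq:formalKilling_n}, \eqref{eq:formalKilling_n'} give $\ed\varphi_n,\ \ed\varphi'_n\equiv 0\bmod\iinfh$, and your explicit expansion (the cancellation of the $\rho$-terms against $b\,\ed\xi$ and $s\,\ed\xib$, and of the two $\frac{\im}{2}\gamma p^{6n+4}$ contributions by antisymmetry) is exactly the computation the paper leaves implicit. Your closing remark is also consistent with the paper, which likewise defers the nontriviality of the classes $[\varphi_n],[\varphi'_n]$ and only identifies their symbols up to a possibly vanishing constant scale.
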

It remains to verify that these conservation laws are indeed nontrivial.
But, Thm.\ref{thm:FKcvlaw}  points to the relevant questions such as periods, residues, 
and the related application of the higher-order conservation laws 
to the global problems for minimal Lagrangian surfaces.
\begin{rem}
Suppose the conservation laws $[\varphi_n], [\varphi_n']$ are nontrivial.
Then, by an analysis of the differential $\ed _1$ as in \S\ref{sec:cvsymbol},
the   spectral weight count shows  that, 
\[
\ed_1([ \varphi_n]) =a^{6n+7},\quad
\ed_1([ \varphi_n'])  =a^{6n+5}, \]
up to constant scale, where $\ed_1$ is the symbol map
$\ed_1:\mcc^{(\infty)}_{loc}\to E^{1,1}_1\simeq \mfj^{(\infty)}$.
\end{rem}

\providecommand{\MR}[1]{}
\providecommand{\bysame}{\leavevmode\hbox to3em{\hrulefill}\thinspace}
\providecommand{\MR}{\relax\ifhmode\unskip\space\fi MR }
\providecommand{\MRhref}[2]{%
  \href{http://www.ams.org/mathscinet-getitem?mr=#1}{#2}
}
\providecommand{\href}[2]{#2}

\end{document}